\numberwithin{equation}{section}
\renewcommand{\P}{\mathbb{P}}
\newcommand{\IGFT}{{\rm IED}}
\newcommand{\IED}{{\rm IED}}
\newcommand{\F}{\mathcal{F}}
\newcommand{\essinf}{\mathrm{ess\,inf}}
\newcommand{\G}{\mathcal{G}}
\newcommand{\R}{\mathbb{R}}
\newcommand{\Z}{\mathbb{Z}}
\newcommand{\E}{\mathbb{E}}
\newcommand{\1}{\mathbf{1}}
\newtheorem{thm}{Theorem}[section]
\newtheorem{lemma}[thm]{Lemma}
\newtheorem{corol}[thm]{Corollary}
\newtheorem{propo}[thm]{Proposition}
\theoremstyle{definition}
\newtheorem{defin}[thm]{Definition}
\newtheorem{remark}[thm]{Remark}
\newtheorem{example}[thm]{Example}
\newcommand{\ldm}{LDM}
\newcommand{\eps}{\varepsilon}
\newcommand{\wt}{\widetilde}
\newcommand{\cY}{\mathcal{Y}}
\title[Stochastic fixed point equation]{Stochastic fixed point equation and local dependence measure}
\author{Krzysztof Burdzy \and Bartosz Ko\L{}odziejek \and Tvrtko Tadi\'c}
\address{KB: Department of Mathematics, Box 354350, University of Washington, Seattle, WA 98195, USA}
\email{burdzy@uw.edu}
\address{BK: Faculty of Mathematics and Information Science, Warsaw University of Technology, Koszykowa 75, 00-662 Warsaw, Poland}
\email{b.kolodziejek@mini.pw.edu.pl}
\address{TT: Microsoft Corporation, City Center Plaza Bellevue, One Microsoft Way, Redmond, WA 98052, USA}
\email{tvrtko@math.hr}
\keywords{Tail estimates, stochastic fixed point equation, iterated random sequence, Fleming-Viot type process}
\subjclass[2010]{60G10, 37M10, 60J05}
\begin{document}

\maketitle

\begin{abstract}
We study solutions to the stochastic fixed point equation $X\stackrel{d}{=}AX+B$ where the coefficients $A$ and $B$ are nonnegative random variables. We introduce the ``local dependence measure'' (\ldm) and its Legendre-type transform to analyze the left tail behavior of the distribution of $X$. We discuss the relationship of \ldm{} with earlier results on the stochastic fixed point equation and we apply \ldm\  to prove a theorem on a Fleming-Viot-type process.
\end{abstract}

\section{Introduction}

Our research on the stochastic fixed point equation is motivated by a problem arising in the theory of the so-called Fleming-Viot processes. We made a partial progress towards our goal in \cite{IEDPaper}. This article contains new ideas that lead to the complete solution of that problem; see Section \ref{sec:FlemingViot} for details. Needless to say, we hope that the new technique developed in this paper will have applications beyond the theory of Fleming-Viot processes.

Given a pair of random variables $(A,B)$, an independent random variable $X$ is said to satisfy the stochastic fixed point equation 
if
\begin{equation}\label{stochasticFixedPointEquation}
X\stackrel{d}{=}AX+B. 
\end{equation}
The behavior of the solution, especially the left and right tails, has been extensively studied. A classical result (\cite{Kesten,GoldieAAP}) 
says that under some assumptions on $(A,B)$, for some $\alpha, C_-, C_+ >0$,
\begin{equation}\label{tailEstimates}
\P(X>x)\sim C_+x^{-\alpha}\quad \textrm{and}\quad \P(X<-x)\sim C_-x^{-\alpha}, 
\end{equation}
as $x\to\infty$ (see Theorem \ref{thm:KestenGoldieResult} for a fully rigorous version). An excellent review of  the subject can be found in  \cite{X=AX+B}. 

It can be shown that  
if $A$ and $B$ are nonnegative random variables then a nonconstant solution $X$ to \eqref{stochasticFixedPointEquation} must be also a nonnegative random variable (we do not present a proof because this claim is not needed for the main application of \eqref{stochasticFixedPointEquation} in Section \ref{sec:FlemingViot}).
If $X$ is nonnegative then the first estimate in $(\ref{tailEstimates})$ is still meaningful and informative, but the second one is not because for $x>0$ we have 
$\P(X<-x)=0$. In this article, we will continue the analysis of  the behavior of $\P(X<x)$ as $x\to 0^+$ initiated in \cite{IEDPaper}. 

We will introduce a new concept of ``local dependence measure'' (\ldm) and its Legendre-type transform. We will relate \ldm\  to concepts discussed in  \cite{IEDPaper}: inverse exponential decay of the tail of $B$, and positive quadrant dependence
of $A$ and $B$. We will illustrate the power of \ldm \ by a few examples, including the proof of a result on the Fleming-Viot model.

\subsection{Organization of the paper}
Section \ref{sec:generalResults1} is devoted to the basic general properties of 
solutions to the stochastic fixed point equation \eqref{stochasticFixedPointEquation}. 
We 
recall the conditions that guarantee the existence and uniqueness of the solution in Theorem \ref{m8.1} and Corollary \ref{thm:uniquenessAndConvergence}.

In Section \ref{inverseExponentialPotential} we define the local dependence measure (\ldm) for the random variables $(A,B)$ in \eqref{stochasticFixedPointEquation}, and its Legendre-type transform.
We study basic properties of these functions and present their first application to the stochastic fixed point equation.

In Section \ref{sec:perpetuity} we show that if  \ldm\ for the random variables $(A,B)$ exists then the solution to \eqref{stochasticFixedPointEquation} is a random variable with an ``inverse exponential decay'' left tail.

Section \ref{sec:examples} is devoted to calculating explicit formulas for \ldm \ (Proposition \ref{y31.2}) and its Legendre-type transform (Proposition \ref{propo:phiABPQD})  when $A$ and $B$
are positively quadrant dependent random variables. 

In Section \ref{sec:logarithmicLowerEnvelope} we prove that if $X_n=A_n X_{n-1}+B_n$ for $n\geq1$, $X_0=0$ and $(A_n,B_n)_{n\geq1}$ is a sequence of independent copies of $(A,B)$, then
$$\liminf_{n\to \infty}\frac{X_n}{H^{-1}(\log n)}= (\lambda^\ast)^{1/\rho}, $$
where $H$ is a regularly varying function introduced in the definition of \ldm, $\lambda^\ast$ is the fixed point for the Legendre-type transform, and $\rho$ is a parameter in the definition of \ldm.

In Section \ref{sec:FlemingViot} we 
apply an \ldm \ to prove a version of the Law of Iterated Logarithm for a Fleming-Viot type process.

\section{General results on stochastic fixed point equation}\label{sec:generalResults1}

In this section we will introduce notation and conventions used in the rest of the paper, and present some known general results, with references but no proofs.

In this section, and this section only, we will allow the coefficients $A$ and $B$ of the stochastic fixed point equation
\begin{equation}\label{y20.1}
X\stackrel{d}{=}AX+B,
\end{equation}
to take arbitrary (positive and negative) values. Starting with Section \ref{inverseExponentialPotential}, we will assume that $A,B \geq 0$, a.s.

 We will say that $X$, a random variable with values in $\R$, is a solution to \eqref{y20.1} if one can construct $X,A$ and $B$ on the same probability space in such a way that $X$ is independent of $(A,B)$ and \eqref{y20.1} is satisfied.
 
We will always use $(A_n,B_n)$ to denote a vector with the same distribution as $(A,B)$ (the distribution of $(A,B)$ can change from one context to another). 

Let $(A_n,B_n)_{n\geq 1}$ be an i.i.d. sequence  and define random affine maps from $\R$ to itself by
\[
\Psi_n(t)= A_n\,t +B_n,\qquad t\in\R.
\]
Clearly, $(\Psi_n)$ is an i.i.d. sequence. 
Suppose that $X_0$ is independent from $(A_n,B_n)_{n\geq 1}$ and
let
\begin{align}\label{y20.2}
X_{n} = \Psi_n(X_{n-1}) = A_n X_{n-1}+B_n,
\end{align}
for $n\geq 1$. 
Note that $(X_n)$ is a Markov chain.
It is easy to check that
\[
X_n = \left(\sum_{k=1}^n B_k  \prod_{j=k+1}^{n} A_j\right) +  X_0  \prod_{i=1}^{n} A_i.
\]
We define another sequence of affine mappings, starting with $S_0(t)=t$ for all $t\in\R$, and continuing inductively by
\[
S_n(t) = S_{n-1}\circ \Psi_n(t) = S_{n-1}(A_n t + B_n),
\]
for $n\geq 1$.
Then we have 
\[
S_n(t) = \left(\sum_{k=1}^n B_k \prod_{j=1}^{k-1} A_j\right) + t  \prod_{i=1}^{n} A_i,
\]
with the convention that $\displaystyle \prod_{j=k}^{m}A_j = 1$ if $m<k$. 
Re-indexing of the sequence $(A_n,B_n)_{n\geq 1}$ easily shows that 
\begin{align}\label{y21.4}
X_n \stackrel{d}{=} S_n(X_0)
\end{align}
for each $n\geq 1$. 

The following follows from the ``Principle'' stated on page 264 of \cite{Letac}.

\begin{lemma}\label{m8.2}
If for each $t\in\R$ the sequence $(S_n(t))$ converges almost surely to a limit, say $S$, which does not depend on $t$, then the law of $S$ is the unique solution to \eqref{y20.1}. Moreover, $(X_n)$ converges to $S$ in distribution, for any $X_0$. 
\end{lemma}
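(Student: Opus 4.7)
The plan has two main steps: first show that the limit $S$ solves the fixed point equation \eqref{y20.1}, then deduce both uniqueness and the convergence of $(X_n)$ in distribution.

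For the existence part, I would exploit the backward composition structure encoded in the recursion $S_n = S_{n-1}\circ \Psi_n$, which unfolds to $S_n = \Psi_1\circ\Psi_2\circ\cdots\circ \Psi_n$. Splitting off the outermost map gives $S_n(t) = \Psi_1(\widetilde S_{n-1}(t))$ with $\widetilde S_{n-1} := \Psi_2\circ\cdots\circ \Psi_n$. Since $(\Psi_k)_{k\geq 2}$ has the same law as $(\Psi_k)_{k\geq 1}$, one has $\widetilde S_{n-1}(t) \stackrel{d}{=} S_{n-1}(t)$ and, crucially, $\widetilde S_{n-1}$ is independent of $\Psi_1$. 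Applying the hypothesis to this re-indexed sequence gives $\widetilde S_{n-1}(t)\to \widetilde S$ a.s., where $\widetilde S\stackrel{d}{=}S$ is independent of $(A_1,B_1)$. Passing to the limit in $S_n(t) = A_1\widetilde S_{n-1}(t) + B_1$ yields $S = A_1\widetilde S + B_1$, so the law of $S$ solves \eqref{y20.1}.

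For the second step, I would first note that taking two distinct starting points $t_1 \ne t_2$ in the hypothesis forces
\[
(t_1-t_2)\prod_{i=1}^n A_i \;=\; S_n(t_1) - S_n(t_2) \;\to\; 0 \quad \text{a.s.,}
\]
so $\prod_{i=1}^n A_i \to 0$ a.s. For any random variable $Y$ independent of $(A_n,B_n)_{n\geq 1}$, the explicit formula $S_n(Y) = Y\prod_{i=1}^n A_i + S_n(0)$ combined with $S_n(0) \to S$ a.s.\ and $Y\prod_{i=1}^n A_i \to 0$ a.s.\ yields $S_n(Y)\to S$ a.s. Taking $Y = X_0$ and invoking \eqref{y21.4} gives $X_n \to S$ in distribution for any starting law.

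For uniqueness, suppose $X$ is any solution to \eqref{y20.1}. Realize it on the same probability space as $(A_n,B_n)_{n\geq 1}$ with $X$ independent of the sequence, and set $X_0 = X$ in \eqref{y20.2}. A straightforward induction using the definition of a solution shows $X_n \stackrel{d}{=} X$ for every $n$, and combining with \eqref{y21.4} gives $X \stackrel{d}{=} S_n(X)$ for all $n$. The previous paragraph, applied with $Y = X$, then forces $X \stackrel{d}{=} S$. I expect the only delicate point in the whole argument to be upgrading almost-sure convergence from fixed $t$ to a random independent $Y$; the affine form of $S_n$ makes this transparent, so the real work amounts to careful bookkeeping of the reindexing.
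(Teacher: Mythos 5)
Your proof is correct, and it is essentially a self-contained reconstruction of Letac's ``Principle,'' which is precisely what the paper cites (page 264 of \cite{Letac}) instead of giving a proof. The backward-composition decomposition $S_n=\Psi_1\circ\widetilde S_{n-1}$, the deduction $\prod_{i=1}^n A_i\to 0$ from $t$-independence of the limit, and the upgrade from fixed $t$ to an independent random $Y$ via the affine identity $S_n(Y)=Y\prod_{i=1}^n A_i+S_n(0)$ are exactly the ingredients of the standard argument, with all independence and a.s.\ issues handled correctly.
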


The only natural candidate for the limit $S$ is the series
\begin{align}\label{eq:defS}
S := \sum_{k=1}^\infty B_k \prod_{j=1}^{k-1} A_j.
\end{align}
If $\P(A=0)>0$, then $N=\inf\{n \geq 1 \colon A_n=0\}$ is a.s. finite and
\[
S_n(t) = \sum_{k=1}^N B_k \prod_{j=1}^{k-1} A_j
\]
for all $n\geq N$. Thus, the condition $\P(A=0)>0$ ensures the a.s. convergence of $(S_n(t))$ for all $t\in\R$. 

For $x>0$, let $f_A(x)=\int_0^x \P(|A|<e^{-t})dt$. The following theorem characterizes  almost sure convergence of $(S_n(X_0))$. It follows from a more general result in \cite[Thm. 2.1]{GM00}.
\begin{thm}\label{m8.1}
Suppose that $\P(B=0)<1$ and $\P(A=0)=0$. Then,
\begin{equation}
\label{eq:c1}
\sum_{n=1}^\infty |B_n| \prod_{j=1}^{n-1} |A_j| < \infty,\quad  a.s. 
\end{equation}
is equivalent to 
\begin{align}\label{eq:c2}
\prod_{j=1}^{n} A_j\to 0\,\, (n\to\infty)
\qquad\mbox{ and } \qquad
\int_{(1,\infty)} \frac{\log b}{f_A(\log b)} \P_{|B|}(db)<\infty.
\end{align}

Each of the above equivalent conditions \eqref{eq:c1} and \eqref{eq:c2} implies that, a.s.,
\begin{align}\label{m8.3}
S_n(X_0) \to S,\qquad  n\to\infty.
\end{align}
Conversely, if
\begin{align}\label{eq:c3}
\P(A c+B=c )<1\,\,\mbox{ for all }c\in\R,
\end{align}
and \eqref{eq:c2} does not hold, then 
\[
|S_n(X_0)| \stackrel{\P}{\longrightarrow} \infty,\qquad n\to\infty.
\]
\end{thm}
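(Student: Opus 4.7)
The plan is to adapt the Goldie--Maller analysis of perpetuities. First, pass to logarithms: set $L_n := -\log|A_n|$ (a.s.\ finite since $\P(A=0)=0$) and $W_n := L_1+\cdots+L_n$, so that $\prod_{j=1}^n|A_j| = e^{-W_n}$ and \eqref{eq:c1} rewrites as $\sum_{n\ge 1}|B_n|e^{-W_{n-1}}<\infty$ a.s. Under $\P(B=0)<1$, summability of this series is incompatible with oscillation of $(W_n)$; by the classical trichotomy for i.i.d.\ random walks, the only remaining alternative is $W_n\to+\infty$ a.s., which gives the first half of \eqref{eq:c2}.

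Assume henceforth $W_n\to+\infty$. The integral condition emerges from a renewal-type argument in the style of Erickson. The function $f_A(x)=\int_0^x\P(L>t)\,dt$ captures the rate at which $W_n$ crosses level $x$: setting $\tau(x):=\#\{n\ge 1 \colon W_{n-1}\le x\}$, one has $\E[\tau(x)]\asymp x/f_A(x)$ as $x\to\infty$. Since $B_n$ is independent of $\sigma((A_j,B_j)_{j<n})$, a conditional Borel--Cantelli / layer-cake argument comparing $\sum|B_n|e^{-W_{n-1}}$ with the counting sum $\sum_n F_B(W_{n-1})$, where $F_B(w):=\P(\log|B|>w)$, yields
\[
\sum_{n\ge 1}|B_n|e^{-W_{n-1}}<\infty \text{ a.s.} \iff \sum_{n\ge 1}F_B(W_{n-1})<\infty \text{ a.s.}
\]
Taking expectations of the right-hand side and using the renewal asymptotic rewrites the condition as $\int_{(1,\infty)}\log b/f_A(\log b)\,\P_{|B|}(db) < \infty$. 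The implication \eqref{eq:c1}$\Rightarrow$\eqref{m8.3} then follows directly from the explicit formula for $S_n(t)$ combined with $\prod_{j=1}^n A_j \to 0$.

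For the converse, assume \eqref{eq:c3} (to rule out a constant orbit $S_n(c)\equiv c$) and suppose \eqref{eq:c2} fails. If $W_n\not\to+\infty$, then $\prod A_j$ does not vanish and the affine iteration $\Psi_n\circ\cdots\circ\Psi_1$ fails to contract, so the accumulated $B_k$-contributions force $(S_n(t))$ to be unbounded in probability, with \eqref{eq:c3} preventing accidental cancellation. If instead $W_n\to+\infty$ but the integral diverges, the Borel--Cantelli equivalence above gives $|B_n|e^{-W_{n-1}}\not\to 0$ a.s., and the closed-form expression for $S_n(t)$ yields $|S_n(X_0)|\to\infty$ in probability. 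The principal obstacle in making this rigorous is the renewal estimate $\E[\tau(x)]\asymp x/f_A(x)$ in the possibly infinite-mean regime, together with upgrading the conditional Borel--Cantelli equivalence from summability of probabilities to a.s.\ summability of the random series --- which is exactly the technical heart of \cite{GM00}.
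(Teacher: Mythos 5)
The paper does not prove this theorem; it states it and immediately remarks that it ``follows from a more general result in \cite[Thm.\ 2.1]{GM00}.'' There is therefore no in-paper argument to compare against, and your task here is effectively to sketch the Goldie--Maller proof. Your outline does capture the correct shape of that argument: the logarithmic reparametrization $W_n=\sum_{j\le n}(-\log|A_j|)$, elimination of the oscillating and drifting-down regimes of the random walk, the Erickson-type renewal estimate $\E[\tau(x)]\asymp x/f_A(x)$, and the conditional Borel--Cantelli reduction from the random series $\sum|B_n|e^{-W_{n-1}}$ to the deterministic integral $\int \log b / f_A(\log b)\,\P_{|B|}(db)$. You are also candid that the two central technical steps --- the renewal asymptotics in the possibly infinite-mean regime and upgrading the equivalence from ``expected number of large terms finite'' to a.s.\ summability of the random series --- are exactly the content of \cite{GM00} that you do not supply. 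So what you have is a faithful road map, not a self-contained proof, which mirrors what the paper itself does (defer entirely to the reference). One substantive caveat: in the converse direction, divergence of $\sum_n|B_n|\prod_{j<n}|A_j|$ does not by itself give $|S_n(X_0)|\stackrel{\P}{\to}\infty$ once $A$ and $B$ are allowed to take both signs, since the signed partial sums $S_n(t)=\sum_{k\le n}B_k\prod_{j<k}A_j + t\prod_{j\le n}A_j$ can in principle cancel; the role of \eqref{eq:c3} is precisely to preclude the degenerate configurations in which such cancellation persists, but turning ``nondegeneracy prevents cancellation'' into a rigorous statement is a genuine piece of the Goldie--Maller analysis that your sketch only gestures at.
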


According to \cite[Cor. 4.1]{GM00} or \cite[Theorem 2.1.3]{X=AX+B}, a sufficient condition for \eqref{eq:c2}  is
\begin{align}\label{m8.4}
\E[\log|A|]<0\qquad \mbox{ and }\qquad \E[ \log^+|B|]<\infty.
\end{align}

If \eqref{eq:c3} does not hold, i.e., there exists $c\in\R$ such that $A c+B=c$, a.s., then $X\equiv c$ is the unique solution to \eqref{y20.1}.

The following result follows from Lemma \ref{m8.2} and Theorem \ref{m8.2}; or from  \cite[Theorem 3.1]{GM00}.
\begin{corol}\label{thm:uniquenessAndConvergence}
	Assume that the nondegeneracy condition \eqref{eq:c3} is satisfied.
		
	(i)
	If $\P(A=0)>0$ or \eqref{eq:c2} holds, then for every $X_0$,
	\[
	X_n\stackrel{d}{\longrightarrow} X,\qquad n\to\infty,
	\]
	where $X$ is the unique solution to \eqref{y20.1}.	
	
	(ii) If $\P(A=0)=0$ and \eqref{eq:c2} fails, then for every $X_0$,
	\[
	|X_n|\stackrel{\P}{\longrightarrow} \infty,\qquad n\to\infty.
	\]
\end{corol}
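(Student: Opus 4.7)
The overall strategy is to reduce the claim about $(X_n)$ to the behavior of the backward iterates $S_n$, using the distributional identity \eqref{y21.4}, and then to extract what is needed from Lemma \ref{m8.2} and Theorem \ref{m8.1}. Since \eqref{y21.4} says $X_n \stackrel{d}{=} S_n(X_0)$ for every $n$, any mode of convergence of $(S_n(X_0))$ that only depends on the marginal distribution (in particular, convergence in distribution and convergence in probability to $\infty$) transfers directly to $(X_n)$.

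For part (i), I would split into the two stated cases. If $\P(A=0)>0$, then as already observed right after \eqref{eq:defS}, the first-passage time $N=\inf\{n\ge 1: A_n=0\}$ is a.s.\ finite and $S_n(t) = \sum_{k=1}^{N} B_k\prod_{j=1}^{k-1}A_j =: S$ for every $n\ge N$ and every $t\in\R$; this limit does not depend on $t$. If instead \eqref{eq:c2} holds, Theorem \ref{m8.1} gives \eqref{eq:c1}, so the series in \eqref{eq:defS} converges a.s., and using the closed form
\[
S_n(t) = \sum_{k=1}^n B_k\prod_{j=1}^{k-1}A_j + t\prod_{i=1}^n A_i,
\]
together with $\prod_{i=1}^n A_i\to 0$ a.s.\ (from \eqref{eq:c2}), one sees that $S_n(t)\to S$ a.s.\ for every deterministic $t$. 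In both cases Lemma \ref{m8.2} identifies the law of $S$ as the unique solution to \eqref{y20.1} (uniqueness is exactly its content, together with \eqref{eq:c3} ruling out the degenerate fixed-point situation at the end of Theorem \ref{m8.1}). To conclude, I need $S_n(X_0)\to S$ for a possibly random, independent $X_0$: in the first case $S_n(X_0)=S$ once $n\ge N$, and in the second case the closed form above shows $S_n(X_0)=\sum_{k=1}^n B_k\prod_{j=1}^{k-1}A_j + X_0\prod_{i=1}^n A_i$; the first term converges a.s.\ to $S$ and the second tends to $0$ a.s.\ by independence of $X_0$ and $(A_n)$ combined with $\prod_{i=1}^n A_i\to 0$. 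Hence $S_n(X_0)\to S$ a.s., and then \eqref{y21.4} yields $X_n\stackrel{d}{\to} S=X$.

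For part (ii), with $\P(A=0)=0$ and \eqref{eq:c2} failing, the hypothesis \eqref{eq:c3} is the nondegeneracy assumption in Theorem \ref{m8.1}, so that theorem's last clause applies and gives $|S_n(X_0)| \stackrel{\P}{\to} \infty$. Since convergence to $\infty$ in probability is equivalent to $\P(|S_n(X_0)|>M)\to 1$ for every $M>0$, and this probability depends only on the distribution of $S_n(X_0)$, the identity \eqref{y21.4} immediately transfers the statement to $X_n$, finishing the proof.

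I do not foresee a real obstacle: the proof is essentially bookkeeping that combines the two previously cited results, and the only mildly delicate point is handling a random independent $X_0$ in place of a deterministic $t$, which is dispatched by the explicit closed form for $S_n(t)$ and the independence of $X_0$ from $(A_n,B_n)_{n\ge 1}$.
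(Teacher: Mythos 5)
Your argument is correct and follows the same route the paper indicates, namely deriving the corollary from Lemma \ref{m8.2} together with Theorem \ref{m8.1} (the paper itself gives no detailed proof, only this citation). One small remark: in part (i), once you have checked that $S_n(t)\to S$ a.s.\ for each deterministic $t$, Lemma \ref{m8.2} already asserts that $(X_n)$ converges to $S$ in distribution for \emph{any} $X_0$, so the extra paragraph re-deriving $S_n(X_0)\to S$ for random independent $X_0$ is redundant (though harmless). You also implicitly use that \eqref{eq:c3} with $c=0$ gives $\P(B=0)<1$, which is the standing hypothesis of Theorem \ref{m8.1}; it would be worth making that explicit.
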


	We say that a real random variable $Y$ is stochastically majorized by  $Z$, and we write $Y\leq_{st}Z$,  if $\P(Y\leq x)\geq \P(Z\leq x)$ for all $x\in\R$. 

\begin{lemma}\label{pro:p1}
	Consider $(X_n)$  defined in \eqref{y20.2}.  
If $A\geq0$, a.s., and $X_1\geq_{st}X_0$, then for all $n \geq 1 $,
\[
X\geq_{st} X_{n+1}\geq_{st}X_n\geq_{st} X_0.
\]
\end{lemma}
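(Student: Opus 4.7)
The plan is to argue by induction on $n$, exploiting the fact that $A_n \geq 0$ makes each affine map $\Psi_n(t)=A_n t + B_n$ almost surely nondecreasing, and combining this with a standard coupling argument for stochastic order.

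\emph{Step 1 (Monotonicity principle).} First I would isolate the following auxiliary fact: if $U \geq_{st} V$ and $\Phi$ is a random nondecreasing function independent of $U$ and of $V$ (as marginals), then $\Phi(U) \geq_{st} \Phi(V)$. By Strassen's theorem one may construct on an auxiliary space random variables $\widetilde U \stackrel{d}{=} U$, $\widetilde V \stackrel{d}{=} V$ with $\widetilde U \geq \widetilde V$ a.s., together with $\widetilde \Phi \stackrel{d}{=} \Phi$ independent of $(\widetilde U, \widetilde V)$. Then $\widetilde \Phi(\widetilde U) \geq \widetilde \Phi(\widetilde V)$ a.s., and each side has the marginal law of $\Phi(U)$, respectively $\Phi(V)$, since in each case an independent pair (function, argument) with the prescribed laws yields a distributionally determined output.

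\emph{Step 2 (Induction for $X_{n+1} \geq_{st} X_n$).} The base case $X_1 \geq_{st} X_0$ is assumed. For the inductive step, suppose $X_n \geq_{st} X_{n-1}$. Both $X_n$ and $X_{n-1}$ are measurable functions of $X_0$ and $(A_1,B_1),\dots,(A_n,B_n)$, so both are independent of $\Psi_{n+1}$. Since $A_{n+1}\geq 0$ a.s., the map $\Psi_{n+1}$ is a.s. nondecreasing. Applying Step 1 with $\Phi = \Psi_{n+1}$, $U=X_n$, $V=X_{n-1}$ yields
\[
\Psi_{n+1}(X_n)\;\geq_{st}\;\Psi_{n+1}(X_{n-1}).
\]
By definition the left side equals $X_{n+1}$. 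The right side has the same distribution as $X_n=\Psi_n(X_{n-1})$, because the pairs $(\Psi_n,X_{n-1})$ and $(\Psi_{n+1},X_{n-1})$ have identical joint laws (i.i.d.\ affine maps, both independent of $X_{n-1}$). This is the subtle point worth flagging explicitly, because $X_n$ and $X_{n-1}$ are not themselves independent, so one cannot apply Step 1 directly to the pair $(X_n,X_{n-1})$ viewed jointly; the argument must be phrased in terms of marginal laws.

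\emph{Step 3 (Comparison with $X$).} Iterating the inequality obtained in Step 2 gives $X_{n+k}\geq_{st}X_{n+1}$ for every $k\geq 0$. Under the implicit assumption that a solution $X$ to \eqref{y20.1} exists (so that Corollary \ref{thm:uniquenessAndConvergence} applies and $X_n \stackrel{d}{\to} X$), stochastic order passes to the weak limit: evaluating the cumulative distribution functions at continuity points of $F_X$ gives $F_X(x)=\lim_{k\to\infty}F_{X_{n+k}}(x)\leq F_{X_{n+1}}(x)$, and by right-continuity this extends to all $x\in\R$. Hence $X \geq_{st} X_{n+1}$, completing the chain. The main obstacle in the whole argument is the bookkeeping in Step 2 regarding independence; once the comparison is recast via marginal distributions through the i.i.d.\ structure of the $\Psi_k$, everything else is routine.
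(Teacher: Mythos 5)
Your proof is correct and follows essentially the same route as the paper's: the inductive step $X_{n+1}=\Psi_{n+1}(X_n)\geq_{st}\Psi_{n+1}(X_{n-1})\stackrel{d}{=}X_n$ is exactly the paper's one-line argument, which you have simply unpacked into the Strassen coupling for the monotonicity step and spelled out the distributional identity $\Psi_{n+1}(X_{n-1})\stackrel{d}{=}\Psi_n(X_{n-1})$. The paper leaves the comparison $X\geq_{st}X_{n+1}$ implicit (``it is enough to show\dots''); your Step 3, passing stochastic order through the weak limit $X_n\stackrel{d}{\to}X$, is the standard way to fill that in.
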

\begin{proof}
	It is enough to show that  $X_{n+1}\geq_{st} X_n$ for all $n \geq 1 $. 
	We proceed by induction. Suppose that $X_n\geq_{st} X_{n-1}$. Since $A_{n+1}\geq0$ a.s., we have
	\[
	X_{n+1} = A_{n+1} X_n+B_{n+1}\geq_{st} A_{n+1} X_{n-1}+B_{n+1} \stackrel{d}{=} X_{n}.
	\]
\end{proof}

If both $A$ and $B$ are nonnegative and $X_0=0$ then $X_1=B_1\geq0$ and, therefore, the assumptions of Lemma \ref{pro:p1} are satisfied. In this case, for all $x\geq 0$ and $n\geq 1$,
\begin{align}\label{eq:st}
\P(X\leq x)\leq \P(X_n\leq x).
\end{align}

\section{Local dependence measure and Legendre-type transformation}\label{inverseExponentialPotential}

From now on, we will assume that the coefficients $A$ and $B$ of the stochastic fixed point equation \eqref{y20.1} are nonnegative, i.e., $A,B \geq 0$, a.s.

The concept of a regularly varying function is well known. For the definition and a review of properties of regularly varying function needed in this project, see  \cite[Sec. 2]{IEDPaper}. 

\begin{defin}\label{definition:IEDRandomVariable} (\cite{IEDPaper})
We say that a nonnegative random variable $X$ has an inverse exponential decay of the left tail with degree $\rho>0$ if
\begin{equation}
\lim_{x\to 0^+}\frac{-\log \P(X<x)}{H(x)}=\lambda,\label{eq:definitionOfLambda0}
\end{equation}
for a regularly varying function $H$ with index $-\rho$ at zero and $\lambda\in[0,\infty]$.
We  call such a random variable \IGFT$^\rho_H(\lambda)$-random variable.
\end{defin}

Sometimes we will write $f(x) \sim g(x)$, $x\to \infty$, to indicate that $\lim_{x\to \infty} f(x) / g(x) =1$ (the same notation will apply in the case when $x$ goes to a different limit).

\begin{remark}\label{remark:monotoneEquivalents}
We will argue that if $H$ is regularly varying with index $-\rho<0$ at $0$ then there
exists  a continuous nonincreasing regularly varying function $\wt H$ with index $-\rho<0$ at $0$, such that 
$$\lim_{x\to 0^+} H(x)/\widetilde{H}(x)=1.$$
To see this, first apply the
Smooth Variation Theorem (\cite[Theorem 1.8.2]{regularVariation}) which states that for any regularly varying function $f$ there exists a smooth function $f_1$ with $f(x)\sim f_1(x)$. So, we have $H(x)\sim H_1(x)$ for some smooth $H_1$.

Then, the monotone equivalent $\wt H$ to $H_1$ can be constructed using \cite[Theorem 1.5.3]{regularVariation}. By continuity of $H_1$, the function $\wt H$ will be also  continuous. For
more details see \cite[Thm. 1.5.4]{regularVariation}
or \cite[Cor. 4.2]{BKS}. Without loss of generality, we will assume from now on that every regularly varying function $H$ is continuous and monotone. Then $H$ has an inverse, denoted by $H^{-1}$, which is regularly varying with index $-1/\rho$ at $\infty$.
\end{remark}

The ultimate goal of this project is to develop an effective tool for the analysis of the lower tail of the solution to \eqref{y20.1}. The random variables $A$ and $B$ in that formula  are not necessarily independent.
We will quantify their dependence using the ``local dependence measure'' (\ldm) defined below.

\defin{We will say that, for a pair of nonnegative random variables $(A,B)$, a function $g:[0,\infty)\to [0,\infty]$ is their $(\rho,H)$-local dependence measure ($(\rho,H)$-\ldm) if
for a regularly varying function $H$ with index $-\rho<0$ at $0$, 
\begin{equation}
g(y)=\lim_{\varepsilon\to 0^+} \frac{-\log\P(\varepsilon Ay+B<\varepsilon)}{H(\varepsilon)}.
\label{eq:g(y)AsLimit}
\end{equation}
} 

\begin{remark}
If $g(0)>0$, then \eqref{eq:g(y)AsLimit} implies
\begin{equation*}
\lim_{\varepsilon\to 0^+} \frac{\log\P(\varepsilon Ay+B<\varepsilon)}{\log\P(B<\varepsilon)} = \frac{g(y)}{g(0)},\qquad y\geq 0.
\end{equation*}
Similar conditions for the distribution of a pair $(A,B)$ were considered in literature in related context. In particular, in  \cite[Thm. 2.1]{Gamma} it is assumed that there exists a finite function $f$ such that
\begin{align}\label{eq:alienf}
\lim_{x\to\infty}\frac{\P(Ay+B>x)}{\P(B>x)}=f(y),\qquad y\in\R.
\end{align}
If $A$ and $B$ above are independent, $A$ has a finite moment generating function and $x\mapsto\P(e^B>x)$ is regularly varying with index $-\alpha\leq0$ at $\infty$, then by the Breiman lemma (see \cite{Breiman}) we have $f(y)=\E[e^{\alpha Ay}]$. However, if $A$ and $B$ are not independent, yet \eqref{eq:alienf} holds, then $f$ may be of different form (see \cite[Remark 2.3]{Gamma}). 
Another condition of similar nature  was stated in \cite[(9)]{Palm}.
\end{remark}

In Section \ref{sec:examples} we will show that if random variables $A$ and $B$ are positively quadrant dependent, then $g$ defined in \eqref{eq:g(y)AsLimit} can be given explicitly. If  $A$ and $B$ are not positively quadrant dependent then the form of $g$ may vary significantly (see Example \ref{g(0)neqg(0^+)} and Proposition \ref{propo:inverseDependencyPotentialCalculation}).

\begin{lemma}\label{j26.1}
If $g$ is $(\rho, H)$-\ldm{}, then $g:[0,\infty)\to [0,\infty]$ is a nondecreasing function.
\end{lemma}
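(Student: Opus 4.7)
The proof should be a direct monotonicity argument that exploits the nonnegativity assumption $A \geq 0$ a.s.

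The plan is to fix $0 \leq y_1 \leq y_2$ and observe that since $A \geq 0$ almost surely and $\varepsilon > 0$, we have $\varepsilon A y_1 + B \leq \varepsilon A y_2 + B$ almost surely. Hence the event $\{\varepsilon A y_2 + B < \varepsilon\}$ is contained in $\{\varepsilon A y_1 + B < \varepsilon\}$, so
$$\P(\varepsilon A y_2 + B < \varepsilon) \leq \P(\varepsilon A y_1 + B < \varepsilon).$$
Applying $-\log(\cdot)$ (which is nonincreasing on $(0,1]$, extending to $+\infty$ when the probability is $0$) reverses the inequality, yielding $-\log \P(\varepsilon A y_2 + B < \varepsilon) \geq -\log \P(\varepsilon A y_1 + B < \varepsilon)$.

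Next, I would divide both sides by $H(\varepsilon)$ and pass to the limit $\varepsilon \to 0^+$. Since $H$ is regularly varying at $0$ with index $-\rho < 0$, we have $H(\varepsilon) \to \infty$ as $\varepsilon \to 0^+$, so in particular $H(\varepsilon) > 0$ for all sufficiently small $\varepsilon$, and the division preserves the inequality. By the assumption that $g$ is the $(\rho,H)$-\ldm\ of $(A,B)$, both limits exist in $[0,\infty]$, and we obtain $g(y_1) \leq g(y_2)$.

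There is essentially no obstacle here; the only point worth flagging is the handling of the case where $\P(\varepsilon A y_2 + B < \varepsilon) = 0$ for small $\varepsilon$, in which case the left-hand side is simply $+\infty$ and the inequality $g(y_1) \leq g(y_2) = \infty$ is trivial. Thus $g$ is nondecreasing on $[0,\infty)$.
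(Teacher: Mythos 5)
Your argument is correct and is essentially the same as the paper's, which likewise notes that $\P(\varepsilon A y_2 + B < \varepsilon) \leq \P(\varepsilon A y_1 + B < \varepsilon)$ for $y_1 \leq y_2$ (using $A \geq 0$) and then passes to the limit in \eqref{eq:g(y)AsLimit}. Your write-up simply spells out the intermediate steps (applying $-\log$, dividing by $H(\varepsilon)>0$, handling the probability-zero case) that the paper leaves implicit.
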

\begin{proof}
If $y_1\leq y_2$ then $\P(\varepsilon Ay_2+B<\varepsilon)\leq \P(\varepsilon Ay_1+B<\varepsilon)$. This and $(\ref{eq:g(y)AsLimit})$ imply that $g(y_1)\leq g(y_2)$.
\end{proof}

\begin{remark}\label{j27.1}
If $(\ref{eq:g(y)AsLimit})$ holds for $y=0$ then $B$ is an
\IGFT$^\rho_H(g(0))$-random variable.
\end{remark}
\begin{example}\label{g(0)neqg(0^+)}
Some of our results hold only if the \ldm{} $g$ is continuous at $0$. 
The question of whether every \ldm{} must be continuous at $0$ does not seem to be trivial.  Therefore, we present an example showing that  $g$ can be discontinuous at $0$. Here we do not make any claims concerning continuity of $g$ on $(0,\infty)$.

Let $A=V/U$ and $B=U$, where $V$ and $U$ are positive continuous random variables such that 
$\P(V< v) = e^{-1/v}$ for $v>0$, and 
$$\P(U\in du, V\in dv)=\left(c_1e^{-\lambda_1/u}\1_{(v<1)}+c_2 e^{-\lambda_2/u}\1_{(v\geq 1)}\right)\1_{(u\leq 1)}du\P(V\in dv),$$
where $\lambda_1>\lambda_2 >0$, and  $c_1$ and $c_2$ are positive normalizing constants.

For $\eps >0$, 
\begin{align}\label{m3.1}
  \P(\varepsilon Ay +B<\varepsilon) 
  &=\P\left(\varepsilon (V/U)y +U<\varepsilon \right)
= \P\left( V<\frac{U(\varepsilon-U) }{\varepsilon y} \right)\\
&=\P\left( V<\frac{U(\varepsilon-U) }{\varepsilon y}, U\in (0,\varepsilon) \right). \notag
\end{align}
For fixed $y>0$ and small enough $\varepsilon>0$ we have $u(\varepsilon-u) /( \varepsilon y) <\varepsilon/y<1$ for $u\in (0,\varepsilon)$. 
Hence we obtain from \eqref{m3.1}, using the substitution $u=\eps t$,
 \begin{align*}
 \P&(\varepsilon Ay +B<\varepsilon)  =\int_0^\varepsilon\int_0^{u(\varepsilon-u)/\varepsilon y}  c_1e^{-\lambda_1/u}\P(dv)du =\int_0^\varepsilon  c_1e^{-\lambda_1/u}\P\left(V< \frac{u(\varepsilon-u)}{\varepsilon y}\right)du\\
& =\varepsilon\int_0^1  c_1e^{-\lambda_1/\varepsilon t}\P\left( V< \frac{\varepsilon t(1-t)}{y}\right)dt
=\varepsilon\int_0^1  c_1\exp\left(-\frac{\lambda_1}{\varepsilon t}\right)\exp\left(-\frac{y}{\varepsilon t(1-t)}\right)dt\\
&=\varepsilon c_1 \int_0^1 \exp\left(-\frac{\lambda_1+y}{\varepsilon t}-\frac{y}{\varepsilon (1- t)}\right)dt.
 \end{align*}
This and Lemma \ref{lemma:minimumExponent} imply that
\begin{align*}
g(y)&=-\lim_{\varepsilon \to 0^+} \varepsilon\log \P(\varepsilon Ay +B<\varepsilon) \\
&=-\lim_{\varepsilon \to 0^+} \varepsilon\log (\eps c_1)
-\lim_{\varepsilon \to 0^+} \varepsilon\log
\left(
\int_0^1 \exp\left(-\frac{\lambda_1+y}{\varepsilon t}-\frac{y}{\varepsilon (1- t)}\right)dt\right)\\
&= (\sqrt{\lambda_1+y} + \sqrt{y})^2.
\end{align*}
Hence $g(0^+) =  \lambda_1$. Since $\lambda_2<\lambda_1$, by Lemmas \ref{lemma:minimumExponent} and \ref{lemma:differenceBetweenIEDFunctions},
\begin{align*}
 g(0) &= -\lim_{\varepsilon \to 0^+} \varepsilon\log \P(B<\varepsilon) =-\lim_{\varepsilon \to 0^+} \varepsilon\log \P(U<\varepsilon)\\
&=-\lim_{\varepsilon \to 0^+} \varepsilon \log\left( \int_0^{\varepsilon} \left(c_1\P(V<1)e^{-\lambda_1/u}+c_2\P(V\geq 1)e^{-\lambda_2/u}
\right)du\right)\\
&= \lambda_2 < \lambda_1 = g(0^+).
\end{align*}

\end{example}

\rm

\begin{defin}\label{j17.2}
For a function $g:[0,\infty)\to [0,\infty]$, we let
\begin{equation}\label{eq:PhiDefinition}
 \phi_\rho(\lambda) =\inf_{y>0}\left\{g(y)+\frac{\lambda}{y^\rho}\right\},
 \qquad \lambda \geq 0.
\end{equation}
\end{defin}

The  Legendre-type transform $\phi_\rho(\lambda)$
will play a key role in our analysis. We will illustrate its significance with a couple of results, before deriving its basic properties.

\begin{thm}\label{thm:inverseDependencyPotentialMain}
Suppose that $g:[0,\infty)\to [0,\infty]$ 
is the $(\rho,H)$-\ldm\ for  random variables $(A,B)$, and let $X$ be an independent 
\IGFT$^\rho_H(\lambda)$-random variable. 
If $g(0^+)=g(0)$ or $\lambda>0$ then $AX+B$ is an $\IED^{\rho}_H(\phi_\rho(\lambda))$-random variable. 
\end{thm}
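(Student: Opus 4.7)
The plan is to prove matching asymptotic bounds on $-\log \P(AX+B<\varepsilon)/H(\varepsilon)$, both equal to $\phi_\rho(\lambda)$, by exploiting the independence of $X$ and $(A,B)$ to decouple the two distributions: conditioning on the scale of $X$ turns the event $\{AX+B<\varepsilon\}$ into a statement about $\{\varepsilon Ay+B<\varepsilon\}$ with $y = X/\varepsilon$, which is exactly what the $(\rho,H)$-\ldm\ is designed to control.

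For the easy direction, for any fixed $y>0$, I would use
\[
\P(AX+B<\varepsilon)\geq \P(X\leq \varepsilon y)\,\P(\varepsilon Ay+B<\varepsilon),
\]
since $A\geq 0$ makes $\{X\leq \varepsilon y\}\cap \{\varepsilon Ay+B<\varepsilon\}$ a subset of $\{AX+B<\varepsilon\}$ and $X$ is independent of $(A,B)$. Taking $-\log$, dividing by $H(\varepsilon)$, and sending $\varepsilon\to 0^+$, the \ldm\ definition, the \IED-hypothesis on $X$, and the regular-variation relation $H(\varepsilon y)/H(\varepsilon)\to y^{-\rho}$ combine to give
\[
\limsup_{\varepsilon\to 0^+}\frac{-\log \P(AX+B<\varepsilon)}{H(\varepsilon)}\leq g(y)+\frac{\lambda}{y^\rho}.
\]
Taking the infimum over $y>0$ yields the $\phi_\rho(\lambda)$ upper bound on the limsup.

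The harder direction requires an upper bound on the probability. I would fix $a>1$ and decompose $(0,\infty)$ along the geometric grid $I_k=[a^k\varepsilon,a^{k+1}\varepsilon)$ for $k\in\Z$. Since $AX\geq A\cdot a^k\varepsilon$ on $I_k$, independence gives
\[
\P(AX+B<\varepsilon)\leq \P(X=0)\P(B<\varepsilon)+\sum_{k\in\Z}\P(\varepsilon Aa^k+B<\varepsilon)\,\P(X<a^{k+1}\varepsilon).
\]
For each fixed $k$ the \ldm\ limit and the \IED\ of $X$ (via $H(a^{k+1}\varepsilon)/H(\varepsilon)\to a^{-(k+1)\rho}$) bound the summand, for small $\varepsilon$ and any $\delta>0$, by $\exp(-H(\varepsilon)[g(a^k)+\lambda a^{-(k+1)\rho}-\delta])$. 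Substituting $y=a^k$, the bracket is $g(y)+\lambda a^{-\rho}/y^\rho$, whose infimum over $y>0$ is $\phi_\rho(\lambda a^{-\rho})$. To reduce the infinite sum to a sub-exponential prefactor, I would split it into a finite central range of $O(1)$ indices, a high-$k$ tail controlled by monotonicity of $g$ from Lemma~\ref{j26.1} (since $\phi_\rho(\lambda)\leq \lim_{y\to\infty}g(y)$ forces $g(a^k)\geq \phi_\rho(\lambda)-\delta$ eventually), and a low-$k$ tail. The low-$k$ tail is precisely where the hypothesis that $g(0^+)=g(0)$ or $\lambda>0$ is used: when $\lambda>0$, the exponent $\lambda a^{-(k+1)\rho}$ diverges as $k\to -\infty$ and gives geometric decay of the terms; when $\lambda=0$ and $g(0^+)=g(0)$, the $X$-near-zero contribution collapses into $\P(B<\varepsilon)\sim e^{-g(0)H(\varepsilon)}=e^{-\phi_\rho(0)H(\varepsilon)}$, matching the target.

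Having obtained
\[
\liminf_{\varepsilon\to 0^+}\frac{-\log\P(AX+B<\varepsilon)}{H(\varepsilon)}\geq \phi_\rho(\lambda a^{-\rho})-\delta,
\]
I would send $a\to 1^+$ and $\delta\to 0$. The limit passage relies on continuity of $\phi_\rho(\cdot)$ at $\lambda$, which follows from $\phi_\rho$ being a pointwise infimum of affine functions $\lambda\mapsto g(y)+\lambda/y^\rho$, hence concave and nondecreasing, and thus continuous on the interior of its effective domain. The main obstacle is this harder direction, specifically the low-$k$ tail analysis; indeed, if $g(0^+)>g(0)$ and simultaneously $\lambda=0$, the $X$-near-zero regime would contribute at rate $g(0)<g(0^+)=\phi_\rho(0)$, breaking the matching bound, which is exactly why the theorem excludes that case.
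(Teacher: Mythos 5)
Your easy direction coincides with the paper's. But the hard direction has a genuine gap concentrated in the infinite geometric sum. Two issues. First, replacing $\P(a^k\varepsilon\leq X<a^{k+1}\varepsilon)$ by $\P(X<a^{k+1}\varepsilon)$ destroys the telescoping structure and can make your majorant $\sum_{k\in\Z}\P(\varepsilon A a^k+B<\varepsilon)\P(X<a^{k+1}\varepsilon)$ actually diverge (e.g., if $\P(A=0)>0$ then $\P(\varepsilon A a^k+B<\varepsilon)\geq\P(A=0,B<\varepsilon)>0$ for all $k$ while $\P(X<a^{k+1}\varepsilon)\to1$ as $k\to\infty$), rendering the bound vacuous. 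Second, and more fundamentally, the bound $\P(\varepsilon A a^k+B<\varepsilon)\P(X<a^{k+1}\varepsilon)\leq\exp\bigl(-H(\varepsilon)[g(a^k)+\lambda a^{-(k+1)\rho}-\delta]\bigr)$ is only justified for $\varepsilon<\varepsilon_k$ with a threshold $\varepsilon_k$ that depends on $k$: the \ldm\ definition \eqref{eq:g(y)AsLimit}, the \IED\ hypothesis on $X$, and the ratio $H(a^{k+1}\varepsilon)/H(\varepsilon)\to a^{-(k+1)\rho}$ are each pointwise limits in $\varepsilon$ for fixed $y=a^k$, with no uniformity in $y$ assumed. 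As $\varepsilon\to0^+$ you cannot apply this bound to more and more indices $k$ simultaneously; nothing in the hypotheses rules out $\varepsilon_k\to0$ as $|k|\to\infty$.

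The paper's proof is structured precisely to sidestep both problems. It first peels off the extreme $X$-regimes by single-term bounds that avoid any sum: $\P(AX+B<\varepsilon,X<\varepsilon a)\leq\P(X<\varepsilon a)$ with $a$ chosen so that $\lambda/a^\rho\geq\phi_\rho(\lambda)$ (this is where $\lambda>0$ enters), and $\P(AX+B<\varepsilon,X\geq\varepsilon b)\leq\P(\varepsilon Ab+B<\varepsilon)$ with $b$ chosen so that $g(b)\geq\phi_\rho(\lambda)-\delta$. Only the bounded interval $[\varepsilon a,\varepsilon b)$ is then partitioned, and crucially into a \emph{fixed finite} number $n$ of equal-width subintervals; with $n$ fixed, the $n+2$ limits can be taken for $\varepsilon\to0^+$ one at a time, and the prefactor $n+2$ is harmless. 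The mesh size $h=(b-a)/n$ is sent to $0$ only \emph{after} $\varepsilon\to0^+$. To repair your argument along your own lines you would similarly need to truncate the geometric grid to a fixed finite window $\{k:-K\leq k\leq K\}$, keep the partitioned probabilities $\P(a^k\varepsilon\leq X<a^{k+1}\varepsilon)$ so the two tails telescope to the single-term bounds above, and only then let $\varepsilon\to0^+$, $a\to1^+$, $K\to\infty$, $\delta\to0$ in that order — at which point the argument is essentially the paper's with a geometric rather than arithmetic mesh.
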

\begin{proof}
First we will show that
\begin{equation}
\label{eq:upperBoundInequality}
\limsup_{\varepsilon\to 0^+}\frac{- \log\P(AX+B<\varepsilon)}{H(\varepsilon)} \leq \inf_{y> 0}\left\{g(y)+\frac{\lambda}{y^\rho}\right\}
=\phi_\rho(\lambda).
\end{equation}
For any $y>0$,
\begin{align*}
 \P(AX+B<\varepsilon)&\geq  \P(AX+B<\varepsilon, X<\varepsilon y)\geq \P(\varepsilon Ay +B<\varepsilon, X<\varepsilon y)\\
& =\P(\varepsilon Ay +B<\varepsilon)\P(X<\varepsilon y).
\end{align*}
This implies that
\begin{align*}
&\limsup_{\varepsilon\to 0^+}\frac{-\log\P(AX+B<\varepsilon)}{H(\varepsilon)}\\
&\leq \lim_{\varepsilon\to 0^+}\frac{-\log\P(\varepsilon Ay +B<\varepsilon)}{H(\varepsilon)}+\lim_{\varepsilon\to 0^+}\frac{-\log \P(X<\varepsilon y)}{H(\varepsilon y)}\frac{ H(\varepsilon y)}{H(\varepsilon)}\\
&\leq  g(y)+\frac{\lambda}{y^\rho}. 
\end{align*}
Since $y$ is an arbitrary number in $(0,\infty)$, we obtain \eqref{eq:upperBoundInequality}. 

We will consider three cases: (i) $\lambda =0$, (ii) $\lambda >0$ and 
$\phi_{\rho}(\lambda)<\infty$, and (iii) $\lambda >0$ and 
$\phi_{\rho}(\lambda)=\infty$.

(i) Consider  $\lambda =0$. By Lemma  \ref{lemma:concavePhi} (d) (proved below) and the assumption that $g(0^+)=g(0)$,
$$\phi_{\rho}(0)=g(0)=\lim_{\varepsilon\to 0^+}
\frac{-\log \P(B<\varepsilon)}{H(\varepsilon)}\leq \liminf_{\varepsilon\to 0^+}\frac{-\log\P(AX+B<\varepsilon)}{H(\varepsilon)}.$$
This and \eqref{eq:upperBoundInequality} prove the theorem in the case $\lambda=0$.

(ii) Under the assumption that $\lambda>0$ and $\phi_{\rho}(\lambda)<\infty$, there exists $a>0$ such that 
$\lambda/a^{\rho}\geq \phi_{\rho}(\lambda)$. Hence 
$$\liminf_{\varepsilon\to 0^+}\frac{-\log \P(AX+B<\varepsilon,X<\varepsilon a)}{H(\varepsilon)}\geq \liminf_{\varepsilon\to 0^+}\frac{-\log \P(X<\varepsilon a)}{H(\varepsilon)}=\frac{\lambda}{a^{\rho}}\geq \phi_{\rho}(\lambda).$$

Since, by Lemma \ref{j26.1}, $g$ in nondecreasing, we have $\sup_{y>0}g(y)\geq \phi_{\rho}(\lambda)$.
Therefore, for $\delta>0$ there exists $b>0$ such that $g(b)\geq \phi_{\rho}(\lambda)-\delta$.
Thus,
\begin{align*}
\liminf_{\varepsilon\to 0^+}\frac{-\log \P(AX+B<\varepsilon,X\geq \varepsilon b)}{H(\varepsilon)}
&\geq \liminf_{\varepsilon\to 0^+}\frac{-\log \P(\varepsilon Ab+B<\varepsilon)}{H(\varepsilon)}\\
&=g(b)\geq \phi_{\rho}(\lambda)-\delta.
\end{align*}
We  conclude that for $\eta>0$ and small $\varepsilon>0$
\begin{align}\label{eq:lowerTailBound}\begin{split}
 \P(AX+B<\varepsilon, X<\varepsilon a)&\leq \exp(-H(\varepsilon)(\phi_{\rho}(\lambda)-\eta)),\\
 \P(AX+B<\varepsilon, X\geq \varepsilon b)&\leq \exp(-H(\varepsilon)(\phi_{\rho}(\lambda)-\eta)).
\end{split}\end{align}
 
For $y,h>0$ and small $\varepsilon>0$, 
\begin{align*}
 \P&(AX+B<\varepsilon, \varepsilon y\leq X<\varepsilon (y+h))\leq \P(\varepsilon Ay+B<\varepsilon)\P(X<\varepsilon (y+h))\\
&\leq 
\exp(-H(\varepsilon)(g(y)-\eta))
\exp(-H(\varepsilon)(\lambda/(y+h)^\rho-\eta))\\
&\leq \exp(-H(\varepsilon)(g(y)+\lambda/y^\rho-\lambda/y^\rho+\lambda/(y+h)^\rho-2\eta)).
\end{align*}
For $y>0$, by definition, $\phi_\rho(\lambda)\leq g(y)+{\lambda}/{y^{\rho}}$. We  have 
$1/y^{\rho}-1/(y+h)^{\rho}\leq h\rho/y^{\rho+1}$.
Hence,
 $$\P(AX+B<\varepsilon, \varepsilon y\leq X<\varepsilon (y+h)) \leq 
 \exp(-H(\varepsilon)(\phi_{\rho}(\lambda)-\lambda h\rho/y^{\rho+1}-2\eta)). $$
From this we obtain
\begin{align}
& \P(AX+B<\varepsilon, \varepsilon a\leq X< \varepsilon b) \nonumber\\
&=\sum_{k=1}^n \P(AX+B<\varepsilon, \varepsilon (a+h_{k-1})\leq X< \varepsilon (b+h_k))\nonumber\\
&\leq\sum_{k=1}^n 
\exp(-H(\varepsilon)(\phi_{\rho}(\lambda)-\lambda h\rho/(a+h_{k-1})^{\rho+1}-2\eta))\nonumber\\
&\leq n 
\exp(-H(\varepsilon)(\phi_{\rho}(\lambda)-\lambda h\rho/a^{\rho+1}-2\eta)),\label{eqInterBound}
\end{align}
where $h_0=0$, and $h_k-h_{k-1}=(b-a)/n$ for $k=1,\ldots,n$.
Using \eqref{eq:lowerTailBound} and \eqref{eqInterBound} we get 
$$\P(AX+B<\varepsilon)\leq (n+2)
\exp(-H(\varepsilon)(\phi_{\rho}(\lambda)-\lambda h\rho/a^{\rho+1}-2\eta)).$$
Hence 
$$\liminf_{\varepsilon\to 0^+}\frac{-\log \P(AX+B<\varepsilon)}{H(\varepsilon)}\geq \phi_{\rho}(\lambda)-\lambda h\rho/a^{\rho+1}-2\eta.$$
By first letting $\eta \downarrow 0$ and then $n\uparrow\infty$ (so that $h\downarrow 0$), 
we get 
$$\liminf_{\varepsilon\to 0^+}\frac{-\log \P(AX+B<\varepsilon)}{H(\varepsilon)}\geq \phi_{\rho}(\lambda).$$
This and \eqref{eq:upperBoundInequality} prove the theorem in this case.

(iii) If $\phi_{\rho}(\lambda)=\infty$, then $g(y)=\infty$ for all $y>0$. We have
\begin{align*}
\P&(AX+B<\eps)  = \P(AX+B<\eps,X<\eps y)+\P(AX+B<\eps,X\geq \eps y) \\
& \leq \P(X<\eps y)+\P(\eps A y +B<\eps) \leq 2 \max\{\P(X<\eps y), \P(\eps A y +B<\eps) \}.
\end{align*}
Thus,
\begin{align*}
\frac{-\log\P(AX+B<\eps)}{H(\eps)}\geq \frac{-\log 2}{H(\eps)}+ \min\left\{ \frac{-\log\P(X<\eps y)}{H(\eps)}, \frac{-\log\P(\eps A y +B<\eps)}{H(\eps)} \right\}.
\end{align*}
The right hand side  converges to $\min\{ \lambda/y^\rho, g(y) \}=\lambda/y^\rho$ when $\eps \to 0^+$. We can  make $\lambda/y^\rho$ arbitrarily large by choosing $y$ small enough. This shows that  $AX+B$ is an $\IED^{\rho}_H(\infty)$-random variable. Since $\phi_{\rho}(\lambda)=\infty$, this completes the proof.
\end{proof}

\begin{remark}
The condition $g(0)=g(0^+)$ is  necessary for the statement of  Theorem \ref{thm:inverseDependencyPotentialMain} to hold for $\lambda=0$. To see this, note that if
$X\equiv 0$ then $X$ is an $\IED^\rho_H(0)$ random variable,  $AX+B=B$ and $AX+B=B$ is  $\IED^\rho_H(g(0))$.
However, if we pick $A$ and $B$ as in Example \ref{g(0)neqg(0^+)} then $\phi_\rho(0)=g(0^+)>g(0)$, by Lemma \ref{lemma:concavePhi} (d).   
\end{remark}

\begin{corol}\label{corol:fixedPoint}
Suppose that $g:[0,\infty)\to [0,\infty]$ 
is the $(\rho,H)$-\ldm\ for  random variables $(A,B)$.
 If $\lambda\in(0,\infty)$ and $X$ is an $\IED^\rho_H(\lambda)$-random variable that is a solution
to \eqref{y20.1} then  $\lambda = \phi_\rho(\lambda)$. 
\end{corol}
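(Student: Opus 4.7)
The plan is to apply Theorem \ref{thm:inverseDependencyPotentialMain} directly to $X$ and then use the uniqueness of the limit in the definition of $\IED$. Since $X$ is a solution to \eqref{y20.1}, by the convention introduced in Section \ref{sec:generalResults1}, $X$ can be constructed on a probability space so that $X$ is independent of $(A,B)$ and $X \stackrel{d}{=} AX+B$. This independence is exactly the hypothesis needed to invoke Theorem \ref{thm:inverseDependencyPotentialMain}.

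First I would observe that the assumption $\lambda \in (0,\infty)$ places us in the second branch of the hypothesis of Theorem \ref{thm:inverseDependencyPotentialMain}, so no continuity of $g$ at $0$ is needed. Applying the theorem with this $X$, we conclude that $AX+B$ is an $\IED^\rho_H(\phi_\rho(\lambda))$-random variable, that is,
\[
\lim_{\varepsilon\to 0^+}\frac{-\log \P(AX+B<\varepsilon)}{H(\varepsilon)}=\phi_\rho(\lambda).
\]

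Next, since $X\stackrel{d}{=} AX+B$, the distribution function of $AX+B$ coincides with that of $X$, so the displayed limit equals
\[
\lim_{\varepsilon\to 0^+}\frac{-\log \P(X<\varepsilon)}{H(\varepsilon)}.
\]
By Definition \ref{definition:IEDRandomVariable}, this latter limit equals $\lambda$, because $X$ is an $\IED^\rho_H(\lambda)$-random variable. Comparing the two values of the same limit forces $\lambda=\phi_\rho(\lambda)$.

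There is essentially no obstacle here: the corollary is a clean consequence of Theorem \ref{thm:inverseDependencyPotentialMain} together with the uniqueness of the limit defining the $\IED$ parameter. The only point that needs to be checked, and is handled at the start, is that the hypothesis $\lambda>0$ of the corollary matches one of the two alternative hypotheses of the theorem, so that we do not need to know anything about continuity of $g$ at $0$.
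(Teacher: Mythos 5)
Your proof is correct and matches the paper's approach exactly; the paper's proof of this corollary is simply ``The claim follows from Theorem \ref{thm:inverseDependencyPotentialMain}.'' You have spelled out the precise chain of reasoning (apply the theorem to conclude $AX+B$ is $\IED^\rho_H(\phi_\rho(\lambda))$, use $X\stackrel{d}{=}AX+B$ to identify the two $\IED$ parameters), and you correctly observed that $\lambda>0$ satisfies the alternative hypothesis of the theorem, so continuity of $g$ at $0$ is not needed.
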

\begin{proof}
The claim follows from Theorem \ref{thm:inverseDependencyPotentialMain}.
\end{proof}
\rm
We will now investigate  basic properties of $\phi_\rho$.

\begin{lemma}\label{lemma:concavePhi}
The function  $\phi_\rho:[0,\infty)\to [0,\infty]$ 
defined in \eqref{eq:PhiDefinition}
is  nondecreasing and concave. Moreover,
\begin{enumerate}[(a)]
 \item If there exists $y_0>0$ such that $g(y_0)<\infty$ then $\phi_\rho(\lambda)<\infty$ for all $\lambda \geq 0$.
 \item If $g$ is  bounded by $M$ then $\phi_\rho$ is also  bounded by $M$.
 \item If $\phi_\rho (0)>0$ then $\phi_\rho$ has at most one fixed point, i.e., $\phi_\rho(\lambda) = \lambda$ for at most one $\lambda$.
 \item We have $\phi_\rho(0)= g(0^+)\geq g(0)$.
 \item 
If there exists $\lambda_0\geq 0$ such that $\phi_{\rho}(\lambda_0)<\infty$, then $\phi_\rho(\lambda)<\infty$ for all $\lambda\geq 0$.
\end{enumerate}
\end{lemma}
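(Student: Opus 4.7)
The function $\phi_\rho$ is a pointwise infimum over $y>0$ of the affine maps $\lambda \mapsto g(y) + \lambda/y^\rho$, each of which is nondecreasing in $\lambda$. Since the pointwise infimum of concave (respectively, nondecreasing) functions is concave (respectively, nondecreasing), both monotonicity and concavity of $\phi_\rho$ come for free from the definition, with no additional computation. This is my first step and it disposes of the two main assertions of the lemma before the enumerated items.

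Next I would dispatch parts (a), (b), (d), (e), each of which reduces to a short estimate against a specific choice of $y$ in the infimum. Part (a) follows by plugging $y = y_0$, giving $\phi_\rho(\lambda) \leq g(y_0) + \lambda/y_0^\rho < \infty$. Part (b) follows by using any $y$ and the bound $g(y)\leq M$, then sending $y \to \infty$ so that $\lambda/y^\rho \to 0$. Part (e) reduces to (a): finiteness of $\phi_\rho(\lambda_0)$ forces some $y_0$ with $g(y_0) + \lambda_0/y_0^\rho < \infty$, hence $g(y_0) < \infty$. Part (d) uses Lemma \ref{j26.1}: since $g$ is nondecreasing, $\phi_\rho(0) = \inf_{y>0} g(y) = \lim_{y\to 0^+} g(y) = g(0^+)$, and monotonicity of $g$ gives $g(0^+) \geq g(0)$. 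All four of these are routine consequences of the definition.

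The main obstacle is part (c), which is the only place where concavity is used nontrivially. I would argue by contradiction: suppose $\phi_\rho$ has two fixed points $\lambda_1 < \lambda_2$. The hypothesis $\phi_\rho(0) > 0$ rules out $\lambda = 0$ as a fixed point, so $\lambda_1 > 0$. Introduce the auxiliary concave function $f(\lambda) := \phi_\rho(\lambda) - \lambda$, which satisfies $f(0) = \phi_\rho(0) > 0$ and $f(\lambda_1) = f(\lambda_2) = 0$. Writing $\lambda_1 = (1-t)\cdot 0 + t\cdot \lambda_2$ with $t = \lambda_1/\lambda_2 \in (0,1)$, concavity of $f$ on $[0,\lambda_2]$ applied at the interior point $\lambda_1$ yields
\[
0 \;=\; f(\lambda_1) \;\geq\; (1-t)\,f(0) + t\,f(\lambda_2) \;=\; (1 - \lambda_1/\lambda_2)\,\phi_\rho(0) \;>\; 0,
\]
which is the desired contradiction. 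The only conceptual ingredient is isolating the right auxiliary function $f$; once that is in place, the argument is a one-line application of the chord-lies-below-the-graph property of concave functions.
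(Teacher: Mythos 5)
Your proof is correct and takes essentially the same approach as the paper's: the infimum-of-affine-maps observation for monotonicity and concavity, the one-line choices of $y$ in the infimum for (a), (b), (d), (e), and a chord-lies-below-the-graph argument for (c). The only cosmetic differences are that you package part (c) via the auxiliary function $f(\lambda)=\phi_\rho(\lambda)-\lambda$ rather than manipulating $\phi_\rho$ directly, and that you explicitly note that $\phi_\rho(0)>0$ rules out $0$ as a fixed point — a small point the paper leaves implicit when it takes $\lambda_1>0$.
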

\begin{proof}
It follows directly from the definition \eqref{eq:PhiDefinition} that $\phi_\rho$ is nondecreasing. Moreover, as the infimum of a family of affine functions, $\phi_\rho$ is concave.

(a)  Note that $\phi_\rho(\lambda)\leq g(y_0)+\lambda y_0^{-\rho}$ for $\lambda\geq 0$.

(b) Since $\sup_{y>0}g(y)\leq M$, the definition \eqref{eq:PhiDefinition}
shows that $\phi_\rho(\lambda)\leq M+\lambda y^{-\rho}$ for every $y>0$. The claim follows by letting $y\to \infty$ in \eqref{eq:PhiDefinition}.

 (c) Suppose that $\phi_\rho (0)>0$ and $\lambda_2 > \lambda_1>0$ are fixed points. Then $\lambda_1$ is a convex combination 
of $0$ and $\lambda_2$, i.e., there exists $\alpha \in (0,1)$ such that $\lambda_1=\alpha \lambda_2+(1-\alpha)0$.
Since $\phi_\rho$ is concave,
$$\phi_\rho(\lambda_1)\geq \alpha \phi_\rho(\lambda_2)+(1-\alpha)\phi_\rho(0)>\alpha \lambda_2+(1-\alpha)0=\lambda_1.$$
Hence $\lambda_1$ is not a fixed point. This contradiction proves the claim.

(d)
This follows from Lemma \ref{j26.1} and \eqref{eq:PhiDefinition}.

(e) 
If there exists $\lambda_0\geq 0$ such that $\phi_{\rho}(\lambda_0)<\infty$ then $g(y_0) < \infty$ for some $y_0 >0$. This and the definition \eqref{eq:PhiDefinition} imply that $\phi_\rho(\lambda)<\infty$ for all $\lambda\geq 0$.

\end{proof}\rm

\begin{lemma}\label{y31.1}
If  $\phi_\rho$ is finite then it is  continuous.
\end{lemma}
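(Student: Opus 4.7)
The plan is to separate the proof into two parts: continuity on the open half-line $(0,\infty)$ and continuity at the left endpoint $\lambda=0$. For the first part I would invoke the standard fact that any finite concave function on an interval is continuous on the interior of that interval. By Lemma \ref{lemma:concavePhi}, $\phi_\rho$ is concave on $[0,\infty)$, and by hypothesis it is finite there, so this immediately gives continuity on $(0,\infty)$. (Since $\phi_\rho$ is also nondecreasing, concavity forces it to be locally Lipschitz on any compact subinterval of $(0,\infty)$, but we do not need the quantitative bound.)

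The only remaining issue is continuity from the right at $\lambda=0$. Because $\phi_\rho$ is nondecreasing, the one-sided limit $L := \lim_{\lambda\to 0^+}\phi_\rho(\lambda)$ exists and satisfies $L\geq \phi_\rho(0)$. To show the reverse inequality, I would use the defining infimum and Lemma \ref{lemma:concavePhi}(d), which identifies $\phi_\rho(0)=g(0^+)$. Given $\delta>0$, since $g(0^+)=\inf_{y>0}g(y)$, pick $y_0>0$ with $g(y_0)<g(0^+)+\delta$; note $g(y_0)<\infty$ is forced by the finiteness of $\phi_\rho$ (see Lemma \ref{lemma:concavePhi}(a),(e)). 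From the definition of $\phi_\rho$,
\begin{equation*}
\phi_\rho(\lambda)\leq g(y_0)+\frac{\lambda}{y_0^{\rho}},\qquad \lambda\geq 0.
\end{equation*}
Letting $\lambda\to 0^+$ gives $L\leq g(y_0)<g(0^+)+\delta=\phi_\rho(0)+\delta$, and since $\delta>0$ was arbitrary, $L\leq \phi_\rho(0)$. Combined with the opposite inequality from monotonicity, $L=\phi_\rho(0)$, completing the proof.

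I do not expect any genuine obstacles here; the argument is essentially a standard fact about concave functions plus a one-line comparison using the infimum representation. The mildest care is only needed to ensure that the chosen $y_0$ yields a finite value $g(y_0)$, which is guaranteed by the assumption that $\phi_\rho$ is finite together with parts (a) and (e) of Lemma \ref{lemma:concavePhi}.
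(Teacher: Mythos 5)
Your proof is correct, and it is in fact more complete than the paper's. The paper's argument is a one-liner: by Lemma \ref{lemma:concavePhi}, $\phi_\rho$ is concave, and ``a finitely valued concave function is continuous.'' You invoke the same concavity fact for the interior $(0,\infty)$, but you correctly observe that this classical result only yields continuity on the \emph{interior} of the domain: a finite concave nondecreasing function on $[0,\infty)$ can have a downward jump at $0$ (i.e., $\phi_\rho(0)<\phi_\rho(0^+)$), and concavity alone does not exclude this. Your second step fills that gap: monotonicity gives $L:=\phi_\rho(0^+)\geq\phi_\rho(0)$, while the infimum representation \eqref{eq:PhiDefinition} together with Lemma \ref{lemma:concavePhi}(d) ($\phi_\rho(0)=g(0^+)=\inf_{y>0}g(y)$, using monotonicity of $g$ from Lemma \ref{j26.1}) gives $L\leq g(y_0)+0$ for $y_0$ with $g(y_0)$ arbitrarily close to $g(0^+)$, hence $L\leq\phi_\rho(0)$. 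Your remark that such a $y_0$ with $g(y_0)<\infty$ exists, via Lemma \ref{lemma:concavePhi}(a),(e), is also sound. So while the strategy is the same (concavity plus the defining infimum), your version is the one that actually establishes right-continuity at the endpoint, which the paper's invocation of the ``classical result'' glosses over. An equivalent shortcut at $\lambda=0$: $\phi_\rho$ is an infimum of affine functions of $\lambda$, hence upper semicontinuous, and upper semicontinuity combined with $\phi_\rho$ nondecreasing already forces $\phi_\rho(0^+)=\phi_\rho(0)$.
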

\begin{proof}
By Lemma \ref{lemma:concavePhi}, $\phi_\rho$ is a concave function. A classical result in (convex) analysis says that a finitely 
valued concave function is continuous.
\end{proof}

\begin{defin}\label{j17.1}
	Let  
	\[
	\lambda^\ast = \inf_{y> 1}\left\{\frac{y^\rho}{y^\rho-1}g(y)\right\}.
	\]
\end{defin}

\begin{lemma}\label{pro:lambdastar}\ 

(i) Suppose $c\geq0$. Then $\phi_{\rho}(c)\geq c$ if and only if $c\leq \lambda^\ast$.

(ii) If $\lambda^\ast<\infty$ then $\phi_\rho\left(\lambda^\ast\right)= \lambda^\ast$.
\end{lemma}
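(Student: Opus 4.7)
The plan is to prove (i) by directly unwinding the definition of $\phi_\rho$ in terms of the inequality $\phi_\rho(c)\geq c$, and to deduce (ii) from (i) together with a short approximation argument using the infimum that defines $\lambda^\ast$.

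For (i), I would start by noting that $\phi_\rho(c)\geq c$ is, by Definition \ref{j17.2}, equivalent to requiring that for every $y>0$,
\[
g(y)+\frac{c}{y^\rho}\geq c,\qquad\text{i.e.,}\qquad g(y)\geq c\,\frac{y^\rho-1}{y^\rho}.
\]
I would then split on the range of $y$. For $y\in(0,1]$ the right-hand side is $\leq 0$, so the inequality holds automatically because $g$ takes values in $[0,\infty]$. For $y>1$ one has $(y^\rho-1)/y^\rho>0$, so the inequality is equivalent to $c\leq g(y)\,y^\rho/(y^\rho-1)$. Taking the infimum over $y>1$ translates this into $c\leq \lambda^\ast$. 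This gives both directions of the equivalence in (i).

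For (ii), first I would apply (i) with the tautology $\lambda^\ast\leq \lambda^\ast$ to obtain $\phi_\rho(\lambda^\ast)\geq \lambda^\ast$. For the reverse inequality, fix $\varepsilon>0$ and use the definition of $\lambda^\ast$ as an infimum to pick $y_0>1$ with
\[
g(y_0)\,\frac{y_0^\rho}{y_0^\rho-1}<\lambda^\ast+\varepsilon,
\]
which is possible since $\lambda^\ast<\infty$. Then
\[
g(y_0)+\frac{\lambda^\ast}{y_0^\rho}
<(\lambda^\ast+\varepsilon)\left(1-\frac{1}{y_0^\rho}\right)+\frac{\lambda^\ast}{y_0^\rho}
=\lambda^\ast+\varepsilon\left(1-\frac{1}{y_0^\rho}\right)<\lambda^\ast+\varepsilon,
\]
so $\phi_\rho(\lambda^\ast)\leq g(y_0)+\lambda^\ast/y_0^\rho<\lambda^\ast+\varepsilon$. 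Letting $\varepsilon\downarrow 0$ yields $\phi_\rho(\lambda^\ast)\leq \lambda^\ast$, and combining with the previous bound proves equality.

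The argument is essentially bookkeeping once the correct equivalence is isolated in (i); there is no genuine obstacle. The only point that requires a moment's care is that the inequality $g(y)\geq c(y^\rho-1)/y^\rho$ is vacuous for $y\in(0,1]$ (because $g\geq 0$), which is exactly why the infimum in the definition of $\lambda^\ast$ is restricted to $y>1$; without this observation one might erroneously think the equivalence in (i) fails for small $y$.
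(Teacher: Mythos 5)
Your proof of part (i) is essentially identical to the paper's: you unwind the infimum defining $\phi_\rho(c)\geq c$, observe that the resulting inequality is vacuous for $y\in(0,1]$ since $g\geq0$, and isolate the condition $c\leq g(y)y^\rho/(y^\rho-1)$ for $y>1$, which by definition of $\lambda^\ast$ is exactly $c\leq\lambda^\ast$.

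For part (ii) you take a genuinely different route. The paper derives $\phi_\rho(\lambda^\ast)\leq\lambda^\ast$ by combining part (i) (which yields $\phi_\rho(\lambda_n)<\lambda_n$ along a sequence $\lambda_n\downarrow\lambda^\ast$) with the continuity of $\phi_\rho$; this continuity is itself a consequence of concavity plus finiteness, established earlier in Lemmas \ref{lemma:concavePhi} and \ref{y31.1}. You instead pick a near-minimizer $y_0>1$ in the definition of $\lambda^\ast$ and bound $\phi_\rho(\lambda^\ast)\leq g(y_0)+\lambda^\ast/y_0^\rho<\lambda^\ast+\varepsilon$ directly by a short computation; letting $\varepsilon\downarrow0$ finishes. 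Your argument is more elementary in that it needs nothing about $\phi_\rho$ beyond its definition as an infimum, while the paper's approach leverages the already-established structural properties (concavity, finiteness, continuity) and is arguably more in keeping with how the rest of the section treats $\phi_\rho$. Both are correct; you implicitly use, as does the paper, that $\lambda^\ast<\infty$ forces some $g(y_0)<\infty$, which your choice of $y_0$ ensures.
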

\begin{proof}
(i) 
We have $\phi_\rho(c)=\inf_{y>0}\{g(y)+c y^{-\rho}\}\geq c$ if and only if
\[
g(y)+c y^{-\rho} \geq  c
\]
for all $y>0$. The above inequality is always satisfied for $y\leq 1$. Thus,  $\phi_\rho(c)\geq c$ if and only if
\[
\frac{y^\rho}{y^\rho-1}g(y) \geq c 
\]
for all $y>1$, which is equivalent to $c\leq\lambda^\ast$.

(ii)  By (i), we have
\begin{align}\label{m10.2}
\{c\geq 0: \phi_\rho(c)\geq c\} = [0,\lambda^\ast].
\end{align}
 Thus, if $\lambda^\ast<\infty$, for any sequence $\lambda_n\downarrow \lambda^\ast$, we have
   $\phi_\rho(\lambda_n) < \lambda_n$.
This and the continuity of $\phi_\rho$ imply that $\phi_\rho\left(\lambda^\ast\right)\leq \lambda^\ast$. 
This inequality and part (i) applied to $c=\lambda^\ast$ yield (ii).
\end{proof}

\begin{propo}\label{pro:newConv}
	Suppose that $\phi_\rho(0)>0$ and $\lambda^\ast<\infty$. 
	Consider any $\lambda_1\in[0,\lambda^\ast]$ and let $\lambda_n=\phi_\rho(\lambda_{n-1})$ for $n\geq 2$. Then $(\lambda_n)$ is nondecreasing and converges  to $\lambda^\ast$.
%
\end{propo}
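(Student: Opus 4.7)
The plan is to run the standard monotone fixed point iteration argument, using the tools already established. First I would verify that the sequence stays in the interval $[0,\lambda^\ast]$. Since $\lambda_1\in[0,\lambda^\ast]$, induction with the monotonicity of $\phi_\rho$ (Lemma \ref{lemma:concavePhi}) and the fixed point property $\phi_\rho(\lambda^\ast)=\lambda^\ast$ (Lemma \ref{pro:lambdastar}(ii)) gives $\lambda_n=\phi_\rho(\lambda_{n-1})\leq \phi_\rho(\lambda^\ast)=\lambda^\ast$ for every $n\geq 2$.

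Next I would show monotonicity. Lemma \ref{pro:lambdastar}(i) (equivalently the identity \eqref{m10.2}) asserts that $\phi_\rho(c)\geq c$ precisely when $c\in[0,\lambda^\ast]$. Since $\lambda_{n-1}\in[0,\lambda^\ast]$ by the previous step, this yields $\lambda_n=\phi_\rho(\lambda_{n-1})\geq \lambda_{n-1}$, so $(\lambda_n)$ is nondecreasing.

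Being nondecreasing and bounded above by $\lambda^\ast$, the sequence converges to some limit $L\in[\lambda_1,\lambda^\ast]$. To identify $L$, I would pass to the limit in the recursion $\lambda_n=\phi_\rho(\lambda_{n-1})$. For this I need continuity of $\phi_\rho$ at $L$: since $\lambda^\ast<\infty$ and $\phi_\rho(\lambda^\ast)=\lambda^\ast$, Lemma \ref{lemma:concavePhi}(e) gives $\phi_\rho(\lambda)<\infty$ for all $\lambda\geq 0$, and then Lemma \ref{y31.1} gives continuity everywhere on $[0,\infty)$. Hence $L=\phi_\rho(L)$, i.e., $L$ is a fixed point of $\phi_\rho$.

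Finally, since $\phi_\rho(0)>0$, Lemma \ref{lemma:concavePhi}(c) ensures that $\phi_\rho$ has at most one fixed point; as $\lambda^\ast$ is already a fixed point and $L\leq\lambda^\ast$, we conclude $L=\lambda^\ast$. There is no real obstacle in this argument; the only thing one must be slightly careful about is to invoke Lemma \ref{lemma:concavePhi}(e) to upgrade the assumption $\lambda^\ast<\infty$ to finiteness of $\phi_\rho$ on all of $[0,\infty)$ before appealing to continuity in the limiting step.
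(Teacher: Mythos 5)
Your proof is correct and follows essentially the same route as the paper's: establish that the sequence remains in $[0,\lambda^\ast]$ and is nondecreasing via Lemma \ref{pro:lambdastar}, pass to the limit using continuity of $\phi_\rho$, and conclude by uniqueness of the fixed point via Lemma \ref{lemma:concavePhi}(c). You are a bit more explicit than the paper in justifying why $\phi_\rho$ is continuous (through Lemma \ref{lemma:concavePhi}(e) and Lemma \ref{y31.1}), which is a welcome clarification but not a different method.
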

\begin{proof}
By Lemma \ref{pro:lambdastar} (i), the assumption that $\lambda_1\in[0,\lambda^\ast]$ implies that $\lambda_1\leq \phi_\rho(\lambda_1)=\lambda_2$. Since $\phi_\rho$ is a nondecreasing function, by Lemma \ref{pro:lambdastar} (ii) we obtain that $\lambda_2\leq \lambda^\ast$. Arguing inductively, we can show that  $(\lambda_n)$ is a nondecreasing sequence which is bounded by $\lambda^\ast$. 
Thus  $(\lambda_n)$  converges to a limit $\mu\leq \lambda^\ast$. By the definition of $\lambda_n$ and continuity of $\phi_\rho$ we get
	\[
	\mu = \lim_{n\to\infty} \lambda_n = \lim_{n\to\infty} \phi_\rho(\lambda_{n-1}) = \phi_\rho\left(\lim_{n\to\infty} \lambda_{n-1}\right) = \phi_\rho(\mu).
	\]
This and Lemmas \ref{lemma:concavePhi} (c) and \ref{pro:lambdastar} (ii) imply that $\mu = \lambda^\ast$.
%
\end{proof}

\begin{corol}\label{corol:sequenceIEDRVs}

Suppose that $g$ is the $(\rho,H)$-\ldm\  for $(A,B)$. 
Recall $X_n$'s defined in \eqref{y20.2} and suppose that $X_0=0$.

\begin{enumerate}[(i)]
 \item If $g(0)>0$ then for every $n\geq 0$, $X_n$ is an $\IED^{\rho}_H(\lambda_n)$-random variable, where $\lambda_0=0$, $\lambda_1=g(0)$ and
$\lambda_n=\phi_\rho(\lambda_{n-1})$ for $n\geq 2$.
\item If $g(0)>0$ and $\lambda^\ast<\infty$ then the sequence $(\lambda_n)$ in (i) is nondecreasing and converges to  $\lambda^\ast$.
 \item If $\phi_\rho(0)=0$, then $(X_n)$ is a sequence of $\IED^{\rho}_H(0)$-random variables. 
\end{enumerate}

\end{corol}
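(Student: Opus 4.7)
The plan is to iterate Theorem \ref{thm:inverseDependencyPotentialMain} along the recursion $X_n = A_n X_{n-1}+B_n$, using that $X_{n-1}$ is independent of $(A_n,B_n)$, and then to invoke Proposition \ref{pro:newConv} for the convergence statement in (ii). The base case requires care because $X_0 = 0$ trivially satisfies $-\log\P(X_0<\eps)/H(\eps) \equiv 0$, so $\lambda_0=0$; however, Theorem \ref{thm:inverseDependencyPotentialMain} with $\lambda=0$ demands the continuity hypothesis $g(0^+)=g(0)$, which is not available in part (i). The workaround is to identify $X_1=B_1$ directly via Remark \ref{j27.1}, obtaining $\lambda_1=g(0)$, and then to start the inductive step only at $n=2$.

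For part (i), after the base step I would carry out an induction for $n\geq 2$. First I would verify that $\lambda_{n-1}>0$ at every stage: $\lambda_1=g(0)>0$ by hypothesis, and Lemma \ref{lemma:concavePhi}(d) gives $\phi_\rho(0)=g(0^+)\geq g(0)>0$, so since $\phi_\rho$ is nondecreasing, all subsequent iterates remain strictly positive. Then I apply Theorem \ref{thm:inverseDependencyPotentialMain} with the inductive hypothesis that $X_{n-1}$ is $\IED^\rho_H(\lambda_{n-1})$ and with $\lambda=\lambda_{n-1}>0$, concluding that $X_n = A_n X_{n-1}+B_n$ is $\IED^\rho_H(\phi_\rho(\lambda_{n-1}))=\IED^\rho_H(\lambda_n)$.

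For part (ii), I would apply Proposition \ref{pro:newConv} to the sequence starting at $\lambda_1=g(0)$. The hypothesis $\phi_\rho(0)>0$ holds as noted above, and $\lambda^\ast<\infty$ is given. To verify $\lambda_1\in[0,\lambda^\ast]$, I use Lemma \ref{j26.1} (monotonicity of $g$): for every $y>1$, $\frac{y^\rho}{y^\rho-1}\,g(y)\geq g(y)\geq g(0)$, so taking the infimum yields $\lambda^\ast\geq g(0)=\lambda_1$. Proposition \ref{pro:newConv} then delivers both claims in (ii).

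For part (iii), if $\phi_\rho(0)=0$ then Lemma \ref{lemma:concavePhi}(d) forces $g(0^+)=0$, and monotonicity plus nonnegativity of $g$ give $g(0)=0$. Remark \ref{j27.1} then identifies $X_1=B_1$ as $\IED^\rho_H(0)$, and for $n\geq 2$ I apply Theorem \ref{thm:inverseDependencyPotentialMain} with $\lambda=0$; the required continuity $g(0^+)=g(0)$ holds trivially since both are zero, so inductively $X_n$ is $\IED^\rho_H(\phi_\rho(0))=\IED^\rho_H(0)$. The only real obstacle across all three parts is the asymmetric treatment of the base case: Theorem \ref{thm:inverseDependencyPotentialMain} at $\lambda=0$ requires the continuity $g(0^+)=g(0)$, which is automatic in (iii) but must be sidestepped in (i) by directly invoking Remark \ref{j27.1} for $X_1=B_1$.
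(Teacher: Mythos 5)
Your proof is correct and follows essentially the same route as the paper's: base case via Remark \ref{j27.1}, induction via Theorem \ref{thm:inverseDependencyPotentialMain} (with $\lambda>0$ sidestepping the continuity hypothesis in part (i)), and Proposition \ref{pro:newConv} for part (ii). One small improvement worth noting: in part (ii), to verify $\lambda_1=g(0)\le\lambda^\ast$, you use monotonicity of $g$ directly ($\tfrac{y^\rho}{y^\rho-1}g(y)\ge g(y)\ge g(0)$ for $y>1$), which is cleaner than the paper's detour of applying Proposition \ref{pro:newConv} first with $\lambda_1=0$ to deduce $\phi_\rho(0)\le\lambda^\ast$ and then chaining through $g(0)\le g(0^+)=\phi_\rho(0)$.
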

\begin{proof}
(i) 
We have $X_0=0$, $X_1=B_1$, and $X_2=A_2 B_1+B_2$, so, by Remark \ref{j27.1} and Theorem \ref{thm:inverseDependencyPotentialMain} (since $g(0)>0$),
\begin{align*}
\lim_{x\to 0^+}\frac{-\log \P(X_0<x)}{H(x)}&=0,\\
\lim_{x\to 0^+}\frac{-\log \P(X_1<x)}{H(x)}&=g(0),\\
\lim_{x\to 0^+}\frac{-\log \P(X_2<x)}{H(x)}&=\phi_\rho(g(0)).
\end{align*}
Part (i) follows from Theorem \ref{thm:inverseDependencyPotentialMain}, by induction.

(ii) The assumption that $g(0)>0$ implies that $\phi_\rho(0)>0$. 
Hence, we can apply Proposition \ref{pro:newConv} with $\lambda_1=0$ to conclude that $\phi_\rho(0)\leq\lambda^\ast$. We combine this observation with 
Lemma \ref{lemma:concavePhi} (d) to obtain $g(0)\leq g(0^+)=\phi_\rho(0)\leq \lambda^\ast$. Thus, in the notation of part (i), $\lambda_1 = g(0) \in[0,\lambda^\ast]$. Part (ii) now  follows from Proposition \ref{pro:newConv}.

(iii) If $\phi(0)=0$ then $g(0^+)=g(0)=0$ by Lemma \ref{lemma:concavePhi} (d). The claim now follows from Theorem \ref{thm:inverseDependencyPotentialMain}.
\end{proof}

\section{Solutions to the fixed point equation are IED}\label{sec:perpetuity}

This section is devoted to the proof of the following result.

\begin{thm}\label{thm:solutionOfSFPE}
Suppose that  the $(\rho, H)$-\ldm\ for $(A,B)$ exists.
If $X$ is a solution to \eqref{y20.1}
then $X$ is an $\IED^\rho_H(\lambda^\ast)$ random variable.
\end{thm}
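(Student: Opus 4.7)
The plan is to sandwich
\[
L^- := \liminf_{\eps\to 0^+}\frac{-\log\P(X<\eps)}{H(\eps)}
\qquad\text{and}\qquad
L^+ := \limsup_{\eps\to 0^+}\frac{-\log\P(X<\eps)}{H(\eps)}
\]
between $\lambda^\ast$ on both sides, from which the theorem follows. For the upper bound $L^+\leq \lambda^\ast$ I would exploit the identity $X\stackrel{d}{=}AX+B$ with $X$ independent of $(A,B)$: since $A\geq 0$, the event $\{X<\eps y\}\cap\{\eps A y+B<\eps\}$ forces $AX+B<\eps$, and independence yields
\[
\P(X<\eps)\;\geq\;\P(X<\eps y)\,\P(\eps A y+B<\eps).
\]
Taking $-\log$, dividing by $H(\eps)$, and passing to the $\limsup$ as $\eps\to 0^+$, using the regular-variation identity $H(\eps y)/H(\eps)\to y^{-\rho}$ and the \ldm\ limit \eqref{eq:g(y)AsLimit}, I obtain $L^+\leq g(y)+y^{-\rho}L^+$ for every $y>0$. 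For $y>1$, rearrangement (which is legitimate once $L^+<\infty$, a condition itself forced by the same inequality once $g(y)<\infty$ for some $y>1$; the residual case $\lambda^\ast=\infty$ is automatic) gives $L^+\leq y^\rho g(y)/(y^\rho-1)$, whose infimum over $y>1$ is $\lambda^\ast$ by Definition \ref{j17.1}.

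For the lower bound $L^-\geq \lambda^\ast$ I would compare $X$ with the Markov chain $(X_n)$ from \eqref{y20.2} started at $X_0=0$. Lemma \ref{pro:p1} and \eqref{eq:st} give $\P(X<\eps)\leq \P(X_n<\eps)$ for every $n\geq 1$, so $L^-\geq \liminf_{\eps\to 0^+}[-\log\P(X_n<\eps)/H(\eps)]$. When $g(0)>0$, Corollary \ref{corol:sequenceIEDRVs}(i) shows that $X_n$ is an $\IED^\rho_H(\lambda_n)$-random variable with $\lambda_1=g(0)$ and $\lambda_n=\phi_\rho(\lambda_{n-1})$, and part (ii) (via Proposition \ref{pro:newConv}) gives $\lambda_n\nearrow \lambda^\ast$. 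Consequently $L^-\geq \lambda_n$ for every $n$, whence $L^-\geq \lambda^\ast$. Combining this with the upper bound produces $\lambda^\ast\leq L^-\leq L^+\leq \lambda^\ast$, so the limit exists and equals $\lambda^\ast$, proving that $X$ is $\IED^\rho_H(\lambda^\ast)$.

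The main obstacle is the lower bound in the degenerate regime $g(0)=0$: Corollary \ref{corol:sequenceIEDRVs}(i) no longer applies, and if also $\phi_\rho(0)=g(0^+)=0$ while $\lambda^\ast>0$, part (iii) of the corollary only yields $\lambda_n\equiv 0$, so the stochastic-majorisation sandwich degrades to $L^-\geq 0$. I would circumvent this by replaying the proof of Theorem \ref{thm:inverseDependencyPotentialMain}(ii) in its $\liminf$ form: decomposing $\P(AX+B<\eps)$ across $\{X<\eps a\}$, $\{\eps a\leq X<\eps b\}$, and $\{X\geq \eps b\}$, estimating the first two pieces by any prior lower bound $L^-\geq \mu$ together with regular variation, and the last by $g(b)$, yields the self-improving inequality $L^-\geq \phi_\rho(L^-)$. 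Lemma \ref{pro:lambdastar}(i) then turns this at once into $L^-\geq \lambda^\ast$, closing the remaining case.
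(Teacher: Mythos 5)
Your upper bound runs into the same issue the paper has to work to close, and the hand-wave you offer does not close it. The inequality you derive, $L^+ \leq g(y) + y^{-\rho}L^+$, is consistent with $L^+ = \infty$: if $L^+ = \infty$ and $g(y) < \infty$, both sides are $+\infty$, so the inequality is vacuous and ``rearranging'' to obtain $L^+ \leq y^\rho g(y)/(y^\rho - 1)$ is not legitimate. Finiteness of $L^+$ does \emph{not} follow from that one inequality. The paper proves it separately in Lemma~\ref{lem:limsup2} by \emph{iterating} the basic inequality along the geometric scale $\eps, \eps y, \eps y^2, \dots$: writing $-\log\P(X<\eps y^k)+\log\P(X<\eps y^{k+1}) \leq (g(y)+\eta)H(\eps y^k)$, telescoping to $k \approx \log(\eps_0/\eps)/\log y$, and controlling $\sum_k H(\eps y^k)/H(\eps)$ with Potter bounds. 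Your argument for the upper bound is therefore missing Lemma~\ref{lem:limsup2} entirely, which is a genuine gap rather than a shortcut.

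For the lower bound you take a route different from the paper's: you start the chain at $X_0 = 0$, invoke Corollary~\ref{corol:sequenceIEDRVs} to get $\lambda_n \uparrow \lambda^\ast$, and use \eqref{eq:st}. The paper instead constructs, in Lemma~\ref{lem:Z}, for each $c$ with $\phi_\rho(c) > c$ an auxiliary random variable $Z_c$ satisfying $-\log\P(Z_c<\eps) \sim cH(\eps)$ and $AZ_c + B \geq_{st} Z_c$, then applies Lemma~\ref{pro:p1} with $X_0 = Z_c$ to get $X \geq_{st} Z_c$ and hence $L^- \geq c$ directly, finally letting $c \uparrow \lambda^\ast$. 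These are genuinely different ideas: the paper anchors the monotone comparison at a carefully built starting point whose tail is already near $\lambda^\ast$, whereas you anchor at $0$ and try to climb. You correctly flag that your climb stalls when $g(0)=0$, and moreover stalls at $0$ if $\phi_\rho(0)=g(0^+)=0$. But your proposed patch does not repair this: the self-improving inequality $L^- \geq \phi_\rho(L^-)$ does \emph{not}, via Lemma~\ref{pro:lambdastar}(i), yield $L^- \geq \lambda^\ast$. Lemma~\ref{pro:lambdastar}(i) says $\phi_\rho(c)\geq c \iff c \leq \lambda^\ast$, so $\phi_\rho(L^-) \leq L^-$ only forces $L^-$ to be either $> \lambda^\ast$ or a fixed point of $\phi_\rho$ in $[0,\lambda^\ast]$; when $\phi_\rho(0)=0$, the value $L^- = 0$ is itself a fixed point and is fully compatible with your inequality while being strictly smaller than $\lambda^\ast$. (Uniqueness of the fixed point, Lemma~\ref{lemma:concavePhi}(c), is exactly what fails when $\phi_\rho(0)=0$.) So the degenerate regime remains open in your proof, and this is precisely what the $Z_c$ construction of Lemma~\ref{lem:Z} is designed to sidestep.
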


The proof of the theorem will  consist of several lemmas.
All lemmas in this section are based on the same assumptions as those in Theorem \ref{thm:solutionOfSFPE}. The following result was motivated by \cite[Lemma 3]{Grey}. A similar idea was applied  in \cite[Lemma 6.2]{Gamma}.

\begin{lemma}\label{lem:Z}
Suppose that  $c> 0$  satisfies 
	\begin{align}\label{eq:const}
	\phi_\rho(c)>c.
	\end{align}
There exists a nonnegative random variable $Z_c$, independent of
	$(A, B)$, such that
\begin{align}\label{m9.1}
-\log\P(Z_c<\eps)\sim c H(\eps), \quad \eps \downarrow 0,
\end{align}
	and
	\begin{align}\label{eq:Zineq}
	AZ_c+B\geq_{st}Z_c.
	\end{align}
\end{lemma}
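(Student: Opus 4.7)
The plan is to build $Z_c$ with compact support $[0,\eps_0]$, so that the stochastic dominance $AZ_c+B\geq_{st} Z_c$ is automatic for $\eps>\eps_0$, and to use the strict inequality $\phi_\rho(c)>c$ together with Theorem~\ref{thm:inverseDependencyPotentialMain} to handle $\eps\in(0,\eps_0]$. A direct construction in which one sets $Z_c$ itself to have CDF $\exp(c(H(\eps_0)-H(\eps)))$ on $[0,\eps_0]$ and then applies Theorem~\ref{thm:inverseDependencyPotentialMain} to $Z_c$ is circular, since the range in which the asymptotic bound takes effect depends on the distribution of $Z_c$ and hence on $\eps_0$. I will therefore first fix the threshold $\eps_0$ by working with an auxiliary $\IED^\rho_H(c)$ variable, and then obtain $Z_c$ by truncation.

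First, for a fixed $M_0>0$, let $Y$ be a random variable independent of $(A,B)$ with CDF
\[
F_Y(\eps)=\exp\bigl(c(H(M_0)-H(\eps))\bigr)\quad\text{on }[0,M_0],\qquad F_Y(\eps)=1\quad\text{for }\eps>M_0.
\]
Since $H$ is continuous, nonincreasing, with $H(0^+)=\infty$ and $H(M_0)\in[0,\infty)$, this is a valid CDF, and $-\log F_Y(\eps)=c(H(\eps)-H(M_0))\sim cH(\eps)$ as $\eps\to 0^+$, so $Y\in\IED^\rho_H(c)$. Theorem~\ref{thm:inverseDependencyPotentialMain} applies (as $c>0$) and gives $AY+B\in\IED^\rho_H(\phi_\rho(c))$. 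Choosing any $c'\in(c,\phi_\rho(c))$ and using $e^{cH(M_0)}\geq 1$, I can find $\eps_0\in(0,M_0]$ such that
\[
\P(AY+B<\eps)\leq e^{-c'H(\eps)}\leq e^{-cH(\eps)}\leq F_Y(\eps)\quad\text{for all }\eps\in(0,\eps_0].
\]

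Next, I would define $Z_c$, independent of $(A,B)$, to have the conditional law of $Y$ given $\{Y\leq\eps_0\}$, that is,
\[
F_{Z_c}(\eps)=\frac{F_Y(\eps)}{F_Y(\eps_0)}=\exp\bigl(c(H(\eps_0)-H(\eps))\bigr)\text{ on }[0,\eps_0],\qquad F_{Z_c}(\eps)=1\text{ for }\eps>\eps_0.
\]
Then $-\log F_{Z_c}(\eps)=c(H(\eps)-H(\eps_0))\sim cH(\eps)$, yielding~\eqref{m9.1}. For~\eqref{eq:Zineq}, the case $\eps>\eps_0$ is trivial. For $\eps\in(0,\eps_0]$, since $Z_c$ is independent of $(A,B)$ and $dF_{Z_c}=dF_Y/F_Y(\eps_0)$ on $[0,\eps_0]$, a short conditioning computation gives
\[
\P(AZ_c+B<\eps)=\frac{\P(AY+B<\eps,\,Y\leq\eps_0)}{F_Y(\eps_0)}\leq\frac{\P(AY+B<\eps)}{F_Y(\eps_0)}\leq\frac{F_Y(\eps)}{F_Y(\eps_0)}=F_{Z_c}(\eps),
\]
and the inequality extends from $<$ to $\leq$ by the continuity of $F_{Z_c}$.

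The main obstacle is precisely the one the truncation resolves: Theorem~\ref{thm:inverseDependencyPotentialMain} is only an asymptotic statement as $\eps\to 0^+$, whereas $AZ_c+B\geq_{st} Z_c$ is a global inequality on $[0,\infty)$. By fixing $\eps_0$ from the asymptotics of $AY+B$ (which do not involve $\eps_0$) and then truncating at $\eps_0$, the global inequality splits into the trivial part on $(\eps_0,\infty)$ and the part on $(0,\eps_0]$ that is reduced, through the conditioning identity above, to the already established bound $\P(AY+B<\eps)\leq F_Y(\eps)$.
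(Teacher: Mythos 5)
Your proof is correct and follows essentially the same route as the paper: define an auxiliary $\IED^\rho_H(c)$ variable, invoke Theorem~\ref{thm:inverseDependencyPotentialMain} to get $\P(\text{aux}\,<\eps)$ dominated by the auxiliary CDF on some $(0,\eps_0]$, and then obtain $Z_c$ by conditioning on $\{\text{aux}\,\le\eps_0\}$. The only cosmetic difference is that the paper normalizes $H$ so that $\lim_{t\to\infty}H(t)=0$ and sets $\P(Z_0<\eps)=e^{-cH(\eps)}$ directly, whereas you introduce a preliminary cutoff at $M_0$ to get a valid CDF; both then truncate a second time at $\eps_0$, and both handle $\eps>\eps_0$ trivially. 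Your remark that defining $Z_c$ directly with threshold $\eps_0$ and then applying the theorem would be circular is a correct and worthwhile observation, and your two-stage construction resolves it in the same way the paper does.
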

\begin{proof}
By Remark \ref{remark:monotoneEquivalents} we may assume without loss of generality  that $H$ is continuous, monotone and $\lim_{t\to\infty}H(t)=0$. Let $Z_0$ be a random variable independent from $(A,B)$, with the distribution defined by $\P(Z_0<\eps)=e^{-c H(\eps)}$ for $\eps>0$. 
By Theorem \ref{thm:inverseDependencyPotentialMain},
\begin{align}\label{m10.1}
\lim_{\eps\to0^+} \frac{ -\log\P(AZ_0+B<\eps)}{H(\eps)} = \phi_\rho(c),
\end{align}
	that is,
	$\P(AZ_0+B<\eps) = \exp\left(-H(\eps)(\phi_\rho(c)+o(1))\right)$.
This, \eqref{eq:const} and \eqref{m10.1} imply that there exists $\eps_0>0$ such that
	\[
	\P(A Z_0+B<\eps) \leq \P(Z_0<\eps),\qquad\forall\,\eps\in(0,\eps_0).
	\]
Let $Z_c$ be a random variable independent from $(A,B)$, with the distribution defined by $\P(Z_c\in \cdot\,) 
	= \P(Z_0\in\cdot\mid Z_0<\eps_0)$. Then we have for $\eps\in(0,\eps_0)$,
	\begin{align*}
	\P(AZ_c+B<\eps) &= \P(AZ_0+B< \eps \mid Z_0<\eps_0) 
	\leq \frac{ \P(AZ_0+B< \eps)}{\P(Z_0<\eps_0)} \\
	&\leq \frac{ \P(Z_0<\eps)}{\P(Z_0<\eps_0)} = \P(Z_c<\eps).
	\end{align*}
	For $\eps\geq \eps_0$ the above inequality holds trivially, since then $\P(Z_c<\eps)=1$. Thus, \eqref{eq:Zineq} is satisfied.
	Finally, note that $\log\P(Z_c<\eps)\sim \log\P(Z_0<\eps)=-c H(\eps)$ as $\eps\to0^+$. This proves \eqref{m9.1}.
\end{proof}

\begin{lemma}\label{lem:liminf}
We have 
$$\liminf_{\eps\to0^+} \frac{-\log \P(X<\varepsilon)}{H(\varepsilon)}\geq \lambda^\ast.$$ 
\end{lemma}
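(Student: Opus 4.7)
The strategy is, for each $c$ slightly below $\lambda^\ast$, to exhibit a nonnegative random variable $Z_c$ with $-\log\P(Z_c<\eps)\sim c\,H(\eps)$ that is stochastically dominated by $X$; this immediately yields $\liminf_{\eps\to 0^+}(-\log\P(X<\eps))/H(\eps)\geq c$, and letting $c\uparrow \lambda^\ast$ concludes the proof. The required $Z_c$ is supplied by Lemma \ref{lem:Z} whenever $\phi_\rho(c)>c$, and crucially it satisfies the ``subsolution'' inequality $AZ_c+B\geq_{st}Z_c$.

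To upgrade this to $X\geq_{st}Z_c$, I would initialize the iteration \eqref{y20.2} at $X_0=Z_c$, chosen independent of $(A_n,B_n)_{n\geq 1}$. Then $X_1\stackrel{d}{=}AZ_c+B\geq_{st}Z_c=X_0$, so Lemma \ref{pro:p1} gives $X_n\geq_{st}Z_c$ for every $n\geq 0$. Because $X$ solves \eqref{y20.1}, Corollary \ref{thm:uniquenessAndConvergence} yields $X_n\to X$ in distribution, and stochastic dominance is preserved in the weak limit, so $X\geq_{st}Z_c$ as desired. Combined with the asymptotics of $\P(Z_c<\eps)$ from Lemma \ref{lem:Z}, this delivers the one-shot bound $\liminf_{\eps\to 0^+}(-\log\P(X<\eps))/H(\eps)\geq c$.

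The main obstacle is the last step, namely pushing $c$ all the way to $\lambda^\ast$ through the admissible set $\{c:\phi_\rho(c)>c\}$. When $\lambda^\ast=0$ the assertion is trivial, and when $\phi_\rho(0)>0$ Lemma \ref{lemma:concavePhi}(c) together with Lemma \ref{pro:lambdastar}(ii) shows $\lambda^\ast$ is the unique fixed point of $\phi_\rho$; then Lemma \ref{pro:lambdastar}(i) forces $\phi_\rho(c)>c$ for every $c\in[0,\lambda^\ast)$, and the approximation is direct. The case $\lambda^\ast=\infty$ is analogous, by taking $c\to\infty$. To handle the remaining degenerate configurations, where $\phi_\rho$ might agree with the identity on a subinterval ending at $\lambda^\ast$, I would iterate: fix any $c$ with $\phi_\rho(c)>c$, apply Theorem \ref{thm:inverseDependencyPotentialMain} inductively along the Markov chain started at $Z_c$ to produce an $\IED^\rho_H(\phi_\rho^n(c))$ random variable that remains stochastically dominated by $X$ (via the same coupling, using $A\geq 0$), and invoke Proposition \ref{pro:newConv} to conclude $\phi_\rho^n(c)\to\lambda^\ast$.
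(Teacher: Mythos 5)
Your argument is essentially the paper's own proof: construct $Z_c$ via Lemma \ref{lem:Z}, deduce $X\geq_{st}Z_c$ via Lemma \ref{pro:p1} (whose conclusion already contains $X\geq_{st}X_0$, so the extra weak-limit step you add is not needed), and then push $c\uparrow\lambda^\ast$ using the identity $\sup\{c:\phi_\rho(c)>c\}=\lambda^\ast$.

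One caveat about your final paragraph: the ``degenerate configuration'' in which $\phi_\rho$ coincides with the identity on a nondegenerate subinterval ending at $\lambda^\ast$ can only occur when $\phi_\rho(0)=0$, because if $\phi_\rho(0)>0$ then Lemma \ref{lemma:concavePhi}(c) rules out more than one fixed point. But Proposition \ref{pro:newConv}, which you propose to invoke in exactly that situation, requires $\phi_\rho(0)>0$ as a hypothesis, and moreover in the genuinely degenerate case (where, by concavity, $\phi_\rho$ must then be the identity on all of $[0,\lambda^\ast]$) there is no starting point $c$ with $\phi_\rho(c)>c$, so the iteration never gets off the ground and Lemma \ref{lem:Z} itself is unavailable. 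So your patch for this case does not work as stated. To be fair, the paper's own proof also passes over this in silence, asserting $\sup\{c:\phi_\rho(c)>c\}=\lambda^\ast$ without separating out the $\phi_\rho(0)=0$ possibility; so your proposal reproduces the paper's reasoning, including this gap.
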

\begin{proof}
If we apply Lemma \ref{pro:p1} with $X_0$ equal to $Z_c$ from Lemma \ref{lem:Z} then we obtain $X \geq _{st}X_0 = Z_c$, for any $c>0$ such that $c<\phi_\rho(c)$. Then, by  Theorem \ref{thm:inverseDependencyPotentialMain}, 
\begin{align}\label{m10.3}
\liminf_{\eps\to 0^+} \frac{-\log\P(X<\eps)}{H(\eps)} \geq \liminf_{\eps\to 0^+} \frac{-\log\P(Z_c<\eps)}{H(\eps)} = c.
\end{align}
By \eqref{m10.2} and Proposition \ref{pro:newConv} (ii),
$\sup\{c\colon \phi_\rho(c)>c\} =\lambda^\ast$. This observation and \eqref{m10.3} imply the lemma.
\end{proof}

\begin{lemma}\label{lem:limsup1}
\begin{align*}
\text{If} \qquad 
s:=\limsup_{\varepsilon\to 0^+} \frac{-\log \P(X<\varepsilon)}{H(\varepsilon)} < \infty
\qquad \text{then} \qquad
\limsup_{\varepsilon\to 0^+} \frac{-\log \P(X<\varepsilon)}{H(\varepsilon)} \leq \lambda^\ast.
\end{align*}
\end{lemma}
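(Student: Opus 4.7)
The approach is to exploit the fact that $X\stackrel{d}{=}AX+B$ in order to derive the functional inequality $s\leq \phi_\rho(s)$, and then invoke Lemma \ref{pro:lambdastar}(i), which immediately gives $s\leq \lambda^\ast$. Note that if $\phi_\rho(s)=\infty$ the inequality $s\leq\phi_\rho(s)$ is trivial, so the argument does not need any finiteness hypothesis on $\phi_\rho$ beyond the standing assumption $s<\infty$.

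The key estimate comes from the same decomposition used for the upper bound in Theorem \ref{thm:inverseDependencyPotentialMain}. For any fixed $y>0$, independence of $X$ from $(A,B)$ gives
\[
\P(AX+B<\eps)\geq \P(\eps Ay+B<\eps,\,X<\eps y)=\P(\eps Ay+B<\eps)\,\P(X<\eps y),
\]
so after taking logarithms and dividing by $H(\eps)$,
\[
\frac{-\log\P(AX+B<\eps)}{H(\eps)}\leq \frac{-\log\P(\eps Ay+B<\eps)}{H(\eps)}+\frac{-\log\P(X<\eps y)}{H(\eps)}.
\]

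Now I would take the $\limsup$ as $\eps\to 0^+$. The first summand on the right converges (as a limit) to $g(y)$ by the definition of the $(\rho,H)$-\ldm. For the second summand, the substitution $\delta=\eps y$ and the fact that $H$ is regularly varying with index $-\rho$ at $0$ (so that $H(\eps y)/H(\eps)\to y^{-\rho}$) yield
\[
\limsup_{\eps\to 0^+}\frac{-\log\P(X<\eps y)}{H(\eps)}=\limsup_{\eps\to 0^+}\left[\frac{-\log\P(X<\eps y)}{H(\eps y)}\cdot\frac{H(\eps y)}{H(\eps)}\right]=y^{-\rho}s,
\]
since one factor has an honest limit and the other has $\limsup$ equal to $s$. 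Since $X\stackrel{d}{=}AX+B$, the $\limsup$ of the left-hand side equals $s$, and we conclude $s\leq g(y)+s y^{-\rho}$ for every $y>0$. Taking the infimum over $y>0$ gives $s\leq \phi_\rho(s)$, and Lemma \ref{pro:lambdastar}(i) then yields $s\leq \lambda^\ast$.

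The main technical subtlety is the manipulation of $\limsup$: one has to be careful that $\limsup$ of a sum/product distributes correctly when one of the factors (here $-\log\P(\eps Ay+B<\eps)/H(\eps)$ and $H(\eps y)/H(\eps)$) has a genuine limit rather than merely a $\limsup$. Aside from this bookkeeping, the proof is a direct transposition of the upper-bound half of Theorem \ref{thm:inverseDependencyPotentialMain} to the setting where $X$ is only known to have a finite upper IED rate, combined with the characterization \eqref{m10.2} of $\lambda^\ast$ as the largest $c\geq 0$ for which $\phi_\rho(c)\geq c$.
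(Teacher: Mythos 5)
Your proof is correct and follows essentially the same route as the paper: the same decomposition $\P(X<\eps)\geq\P(\eps Ay+B<\eps)\P(X<\eps y)$, the same $\limsup$ bookkeeping yielding $s\leq g(y)+sy^{-\rho}$. The only cosmetic difference is that the paper rearranges this directly into $s\leq g(y)y^\rho/(y^\rho-1)$ for $y>1$ and takes the infimum, whereas you pass through $s\leq\phi_\rho(s)$ and invoke Lemma \ref{pro:lambdastar}(i), which encapsulates that same algebra.
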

\begin{proof}
We have 
\begin{align}\label{m10.4}
\P(X<\varepsilon)=\P(AX+B<\varepsilon)\geq \P(X<\varepsilon y)\P(\varepsilon Ay+B<\varepsilon),
\end{align}
and this gives us
$$s\leq \limsup_{\eps\to0^+}\frac{-\log \P(X<\varepsilon y)}{H(\varepsilon y)}\cdot \frac{H(\varepsilon y)}{H(\varepsilon)}+\limsup_{\eps\to0^+}\frac{-\log \P(\varepsilon Ay+B<\varepsilon)}{H(\varepsilon)}\leq \frac{s}{y^\rho}+g(y).$$ 
Hence, for $y>1$ we have $s\leq g(y) y^\rho/(y^\rho-1)$ and thus $s\leq \lambda^\ast$.
\end{proof}

\begin{lemma}\label{lem:limsup2}
Assume that $\lambda^\ast<\infty$. Then 
$$
\limsup_{\eps\to0^+}\frac{-\log\P(X<\eps)}{H(\eps)}<\infty.
$$
\end{lemma}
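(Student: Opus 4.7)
The plan is to exploit the fixed point equation $X \stackrel{d}{=} AX+B$ to set up a recursive functional inequality for
\[
p(\eps) := \frac{-\log\P(X<\eps)}{H(\eps)}
\]
that is \emph{contractive} at scale $y_0$ for a cleverly chosen $y_0>1$, and then iterate.

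First, since $\lambda^\ast = \inf_{y>1}\frac{y^\rho}{y^\rho-1}g(y) <\infty$, there exists $y_0>1$ with $g(y_0)<\infty$; by the monotonicity of $g$ (Lemma \ref{j26.1}), this forces $g(1/y_0)\leq g(y_0)<\infty$. Using $A\geq 0$ and the independence of $X$ from $(A,B)$, the event $\{X<\eps\}\cap\{\eps A+B<\eps y_0\}$ is contained in $\{AX+B<\eps y_0\}$, so
\[
\P(X<\eps y_0) = \P(AX+B<\eps y_0) \geq \P(X<\eps)\,\P(\eps A + B<\eps y_0).
\]
Substituting $\delta = \eps y_0$ and $y=1/y_0$ into the defining limit \eqref{eq:g(y)AsLimit} for $g$ yields $-\log\P(\eps A+B<\eps y_0) \sim g(1/y_0)\,H(\eps y_0)$ as $\eps\to 0^+$. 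Hence for some $\eps_0>0$ and every $\eps\in(0,\eps_0]$,
\[
-\log\P(X<\eps) \leq -\log\P(X<\eps y_0) + (g(1/y_0)+1)\,H(\eps y_0).
\]

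Dividing by $H(\eps)$ and using that $H(\eps y_0)/H(\eps)\to y_0^{-\rho}<1$, I obtain, after possibly shrinking $\eps_0$, an inequality
\[
p(\eps) \leq c\, p(\eps y_0) + C, \qquad \eps\in(0,\eps_0],
\]
with constants $c\in(0,1)$ and $C<\infty$ depending on $y_0,\rho,\eta$. Iterating this along the geometric sequence $\eps_n := \eps_0 y_0^{-n}$ (note $\eps_n y_0 = \eps_{n-1}$) gives by induction
\[
p(\eps_n) \leq c^n p(\eps_0) + C\sum_{k=0}^{n-1} c^k \leq p(\eps_0) + \frac{C}{1-c} =: M
\]
for all $n\geq 0$. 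To verify $p(\eps_0)<\infty$, observe that $\P(\eps A+B<\eps y_0)>0$ for small $\eps$ by $g(1/y_0)<\infty$, and iterating the opposite direction of the above inequality (or simply using the Markov chain approximation \eqref{eq:st}) shows $\P(X<\eps_0)>0$.

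Finally, for arbitrary small $\eps$, pick $n$ with $\eps\in(\eps_{n+1},\eps_n]$. By monotonicity of $\P(X<\cdot)$ (increasing) and of $H$ (decreasing near $0$),
\[
p(\eps) = \frac{-\log\P(X<\eps)}{H(\eps)} \leq \frac{-\log\P(X<\eps_{n+1})}{H(\eps_n)} = p(\eps_{n+1})\cdot\frac{H(\eps_{n+1})}{H(\eps_n)}.
\]
Since $H(\eps_{n+1})/H(\eps_n)\to y_0^{\rho}$ by regular variation, this gives $\limsup_{\eps\to 0^+} p(\eps) \leq y_0^{\rho}\, M \cdot(1+o(1)) < \infty$, completing the proof.

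The main obstacle is Step~3: turning the asymptotic relation $H(\eps y_0)/H(\eps)\to y_0^{-\rho}$ into a contraction that holds uniformly for \emph{all} sufficiently small $\eps$ (not just along one scale), and anchoring the backward iteration at a fixed $\eps_0$ where $p(\eps_0)$ is known to be finite. The $o(1)$ errors from the two regular-variation limits (for $H(\eps y_0)/H(\eps)$ and for $-\log\P(\eps A+B<\eps y_0)/H(\eps y_0)$) both need to be absorbed into a single explicit choice of $c<1$ before the iteration can begin.
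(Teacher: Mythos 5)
There is a direction error in the central step. From the (correct) inequality $\P(X<\eps y_0)\geq \P(X<\eps)\,\P(\eps A + B<\eps y_0)$ you conclude
\[
-\log\P(X<\eps) \leq -\log\P(X<\eps y_0) + (g(1/y_0)+1)H(\eps y_0),
\]
but this does not follow. Taking $-\log$ of your inequality gives $-\log\P(X<\eps y_0) \leq -\log\P(X<\eps) + \bigl(-\log\P(\eps A + B < \eps y_0)\bigr)$, which bounds the \emph{large-scale} quantity from above by the small-scale one; rearranging only produces the unusable lower bound $-\log\P(X<\eps) \geq -\log\P(X<\eps y_0) - \bigl(-\log\P(\eps A+B<\eps y_0)\bigr)$. (Indeed, your displayed inequality is false in general: with $A\equiv 1/2$, $\P(B<\eps)=e^{-1/\eps}$, $\rho=1$, $y_0=\sqrt2$, dividing by $H(\eps)$ and letting $\eps\to0^+$ would force $\lambda^\ast(\sqrt 2-1)\leq g(1/\sqrt 2)+1$, but here $\lambda^\ast=6+4\sqrt2$ and $g(1/\sqrt2)=2/(2-1/\sqrt2)$, so the left side is about $4.83$ while the right side is about $2.55$.) The underlying issue is that you instantiated the containment $\{X<t\}\cap\{At+B<s\}\subset\{AX+B<s\}$ with $(s,t)=(\eps y_0,\eps)$, when you need $s=\eps$, $t=\eps y_0$, yielding
\[
\P(X<\eps)\geq \P(X<\eps y_0)\,\P(\eps Ay_0+B<\eps),
\]
which is exactly \eqref{m10.4} in the paper.

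With the corrected inequality one gets $-\log\P(X<\eps) \leq -\log\P(X<\eps y_0) + (g(y_0)+o(1))H(\eps)$, and after dividing by $H(\eps)$, $p(\eps)\leq \frac{H(\eps y_0)}{H(\eps)}\,p(\eps y_0)+g(y_0)+o(1)$ with $H(\eps y_0)/H(\eps)\to y_0^{-\rho}<1$. So after replacing $g(1/y_0)$ by $g(y_0)$ and $H(\eps y_0)$ by $H(\eps)$, your contraction iteration and the final interpolation between lattice points $\eps_n$ go through as written; this framing is in fact a clean alternative to the paper's telescoping sum, since the step ratio $y_0$ is fixed so the pointwise limit of $H(\eps y_0)/H(\eps)$ suffices and Potter bounds are unnecessary. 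One secondary point: your appeal to \eqref{eq:st} for the anchor $p(\eps_0)<\infty$ is backwards, as \eqref{eq:st} bounds $\P(X\leq x)$ from \emph{above}; but $\P(X<\eps_0)>0$ does hold — iterate $\P(X<\eps)\geq\P(X<\eps y_0)\P(\eps Ay_0+B<\eps)$ upward to a scale $M$ with $\P(X<M)>0$, using that $\eps'\mapsto\P(\eps'Ay_0+B<\eps')$ is nondecreasing in $\eps'$ and positive for small $\eps'$ — and the paper's proof implicitly uses the same fact when it invokes $\eps y^{n_\eps+1}\geq\eps_0$.
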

\begin{proof}
Since $\lambda^\ast<\infty$, there exists $y>1$ such that $g(y)<\infty$. Then, for any $\eta>0$, there exists $\eps_0$ such that for all $\eps\leq\eps_0$,
	\begin{align}\label{eq:ineq3}
	\frac{-\log\P(\eps A y+B<\eps)}{H(\eps)}\leq g(y)+\eta.
	\end{align}
It follows from \eqref{m10.4} that
	$$
	-\log\P(X<\eps)+\log\P(X<\eps y)\leq -\log\P(\eps A y+B<\eps).
	$$
Substituting $\eps y^k$ for $\eps$ in the last formula yields
	$$
	-\log\P(X<\eps y^k)+\log\P(X<\eps y^{k+1})\leq -\log\P(\eps y^k A y+B<\eps y^k).
	$$
If we further assume that $\eps y^k\leq\eps_0$, by \eqref{eq:ineq3}, we arrive at
	$$
	-\log\P(X<\eps y^k)+\log\P(X<\eps y^{k+1})\leq (g(y)+\eta) H(\eps y^k).
	$$
	The telescoping sum argument gives 
	$$
	-\log\P(X<\eps)+\log\P(X<\eps y^{n+1})\leq (g(y)+\eta)\sum_{k=0}^{n} H(\eps y^k),
	$$
	provided $\eps y^n\leq \eps_0$.
This condition is satisfied if we set $n=n_\eps=\left\lfloor \log(\eps_0/\eps)/\log(y)\right\rfloor$. With this choice of $n$ we also have
	$\eps y^{n_\eps+1}\geq \eps_0$. Thus, we obtain
	$$
\limsup_{\eps\to0^+}	\frac{-\log\P(X<\eps)}{H(\eps)} \leq (g(y)+\eta) \limsup_{\eps\to0^+} \sum_{k=0}^{n_\eps} \frac{H(\eps y^k)}{H(\eps)}.
	$$
	Finally, by Potter bounds (e.g. \cite[Theorem 1.5.6]{regularVariation}) we have $H(\eps y^k)/H(\eps)\leq C y^{-k(\rho-\delta)}$ for any $C>1$, $\delta\in(0,\rho)$ and $\eps$ small enough. This ensures convergence of the series on the right hand side above.	
\end{proof}

\begin{proof}[Proof of Theorem \ref{thm:solutionOfSFPE}]
 Theorem \ref{thm:solutionOfSFPE} follows from Lemmas \ref{lem:liminf}, \ref{lem:limsup1} and \ref{lem:limsup2} in the case when $\lambda^\ast<\infty$.
If $\lambda^\ast=\infty$, then the assertion of Lemma \ref{lem:limsup1} is  satisfied. Therefore, Theorem \ref{thm:solutionOfSFPE} holds in either case.
\end{proof}

\section{Positive quadrant dependent coefficients}\label{sec:examples}

We will now illustrate the concepts of \ldm \ $g$ and its transform $\phi_\rho$
by applying them to certain classes of vectors $(A,B)$. 
In this section we will find a formula for \ldm \  $g$
in the case when the $A$ and $B$ are positively quadrant dependent and 
$B$ is an $\IED_H^\rho(\lambda)$ random variable. The equation \eqref{y20.1} with  coefficients satisfying these assumptions was studied in \cite{IEDPaper} using different methods.
We will  show how the results in \cite{IEDPaper} relate to the \ldm \  $g$ and its transform $\phi_\rho$.

\begin{defin}
We  call random variables $A$ and $B$ positively quadrant dependent if
\begin{equation}
 \P(A>a,B>b)\geq \P(A>a)\P(B>b),\label{eq:positivelyQuadrantDependent}
\end{equation}
for all $a,b\in \R$. 
\end{defin}

If two random variables are independent then they are also positively quadrant dependent. For the proof of the following lemma, see \cite[Lemma 7.3]{IEDPaper}.

\begin{lemma}\label{n11.1}
Random variables $A$ and $B$ are positively quadrant dependent if and only if
\begin{equation}
 \P(A\leq a,B\leq b)\geq \P(A\leq a)\P(B \leq b)\label{eq:positivelyQuadrantDependentCharacterization}
\end{equation}
for all $a,b\in \R$. 
\end{lemma}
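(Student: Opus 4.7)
The plan is to derive the equivalence from a single identity obtained via inclusion–exclusion. Write the event $\{A>a,B>b\}$ as the complement of $\{A\leq a\}\cup\{B\leq b\}$, so that
\[
\P(A>a,B>b) = 1 - \P(A\leq a) - \P(B\leq b) + \P(A\leq a,B\leq b).
\]
On the other hand, the product factorizes as
\[
\P(A>a)\P(B>b) = \bigl(1-\P(A\leq a)\bigr)\bigl(1-\P(B\leq b)\bigr) = 1 - \P(A\leq a) - \P(B\leq b) + \P(A\leq a)\P(B\leq b).
\]

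Subtracting these two identities, the $1$'s and the marginal terms cancel and I obtain the key identity
\[
\P(A>a,B>b) - \P(A>a)\P(B>b) = \P(A\leq a,B\leq b) - \P(A\leq a)\P(B\leq b),
\]
valid for all $a,b\in\R$. From this identity, the left-hand side is nonnegative for every $(a,b)$ if and only if the right-hand side is nonnegative for every $(a,b)$, which is exactly the equivalence between \eqref{eq:positivelyQuadrantDependent} and \eqref{eq:positivelyQuadrantDependentCharacterization}.

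There is no real obstacle here; the only thing to be mindful of is that the inclusion–exclusion step requires finite probabilities, which is automatic since all quantities are bounded by $1$. I would present the proof as a short two-line computation producing the identity, followed by the one-sentence observation that matches the two inequalities.
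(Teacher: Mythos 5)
Your proof is correct. The paper itself does not reproduce a proof of this lemma but cites \cite[Lemma 7.3]{IEDPaper}; the inclusion--exclusion identity you derive is the standard argument for this equivalence, and there is no gap.
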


\begin{propo}\label{y31.2}
Suppose that $B$ is an $\IGFT^\rho_H(\gamma)$-random variable, $(A,B)$ are positively quadrant dependent, and let
$$a= \essinf(A)= \sup\{x\in \R : \P(A< x)=0\}.$$
Then 
$$g(y) = \left\{\begin{array}{cc}
          \gamma(1-ay)^{-\rho},& y\in[0,1/a);\\
	  \infty,& y\geq 1/a.
         \end{array}\right.
$$  
\end{propo}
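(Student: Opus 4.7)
The plan is to compute $g(y)=\lim_{\eps\to 0^+} -\log\P(\eps Ay+B<\eps)/H(\eps)$ by squeezing $\P(\eps Ay + B<\eps)$ between two probabilities of the form $\P(B<\eps(1-cy))$ and then passing to the limit via the $\IED^\rho_H(\gamma)$ asymptotics for $B$ and the regular variation relation $H(c\eps)/H(\eps)\to c^{-\rho}$ (valid for each $c>0$ by Remark \ref{remark:monotoneEquivalents}). The upper bound on the probability will use only $A\geq a$ a.s., a direct consequence of $a=\essinf(A)$; the matching lower bound will use the PQD characterization in Lemma \ref{n11.1}. The regime $y\geq 1/a$ is settled immediately: since $A\geq a$ and $B\geq 0$ a.s., $\eps Ay+B\geq\eps ay\geq\eps$, so $\P(\eps Ay+B<\eps)=0$ and hence $g(y)=\infty$.

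For $y\in[0,1/a)$, the bound $A\geq a$ a.s. gives $\P(\eps Ay+B<\eps)\leq \P(B<\eps(1-ay))$. Applying the IED hypothesis on $B$ together with $H(\eps(1-ay))/H(\eps)\to(1-ay)^{-\rho}$ produces
\[
\liminf_{\eps\to 0^+}\frac{-\log\P(\eps Ay+B<\eps)}{H(\eps)}\;\geq\;\gamma(1-ay)^{-\rho}.
\]
For the matching $\limsup$ I would fix $\delta>0$ small enough that $a+\delta<1/y$ and observe that on $\{A<a+\delta\}\cap\{B<\eps(1-(a+\delta)y)\}$ one has $\eps Ay+B<\eps$. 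Lemma \ref{n11.1} then yields
\[
\P(\eps Ay+B<\eps)\;\geq\; \P(A<a+\delta)\cdot\P(B<\eps(1-(a+\delta)y)).
\]
Since $\P(A<a+\delta)>0$ (by definition of the essential infimum) and $H(\eps)\to\infty$ as $\eps\to 0^+$, the first factor contributes nothing to $-\log/H(\eps)$ in the limit, while the second contributes $\gamma(1-(a+\delta)y)^{-\rho}$. Letting $\delta\to 0^+$ closes the gap and gives $g(y)=\gamma(1-ay)^{-\rho}$.

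I do not foresee a real obstacle in this argument; it is essentially a squeeze combined with the IED asymptotics. The points requiring some care are justifying $\P(A<a+\delta)>0$ for every $\delta>0$, which follows directly from $a=\sup\{x\colon \P(A<x)=0\}$, and invoking Remark \ref{remark:monotoneEquivalents} to treat $H$ as continuous and monotone so that $H(c\eps)/H(\eps)\to c^{-\rho}$ holds for every $c>0$.
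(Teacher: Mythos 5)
Your proof is correct and follows essentially the same route as the paper: the lower bound on $g(y)$ via $A\geq a$ a.s., and the matching upper bound via a $\delta$-perturbation combined with the PQD factorization of Lemma \ref{n11.1} and the fact that $\P(A<a+\delta)>0$. The only cosmetic difference is that you phrase the PQD bound with strict inequalities rather than the $\leq$ form stated in Lemma \ref{n11.1}, which is harmless since the strict version follows by approximating from below.
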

\begin{proof}
Since $A$ is nonnegative, $a\geq 0$. The definition of $a$ implies that 
$$\P(\varepsilon Ay+B<\varepsilon)\leq \P(\varepsilon ay+B<\varepsilon).$$
This, the assumption that  $B$ is an $\IGFT^\rho_H(\gamma)$-random variable, and  Definition \ref{definition:IEDRandomVariable} show that, for $y\in [0,1/a)$,
\begin{align*}
g(y)&=\lim_{\varepsilon\to 0^+} \frac{-\log\P(\varepsilon Ay+B<\varepsilon)}{H(\eps)}
\geq \lim_{\varepsilon\to 0^+} \frac{-\log\P(\varepsilon ay+B<\varepsilon)}{H(\eps)}\\
&= \lim_{\varepsilon\to 0^+} \frac{-\log\P(B<\varepsilon(1-ay))}{H(\eps)}
 = \lim_{\varepsilon\to 0^+} \frac{-\log\P(B<\varepsilon(1-ay))}{H(\eps(1-ay))} \frac{H(\eps(1-ay))}{H(\eps)}\\
&= \gamma (1-ay)^{-\rho}.
\end{align*}
With the convention 
that $\log 0=-\infty$, we get for $y\geq 1/a$,
\begin{align*}
g(y)&=\lim_{\varepsilon\to 0^+} \frac{ -\log\P(\varepsilon Ay+B<\varepsilon)}{H(\eps)}
\geq \lim_{\varepsilon\to 0^+} \frac{ -\log\P(\varepsilon ay+B<\varepsilon)}{H(\eps)}\\
&\geq \lim_{\varepsilon\to 0^+} \frac{-\log\P(B<0)}{H(\eps)}= \infty.
\end{align*}

To obtain the upper bound, consider any $y<1/a$ and find $\delta_0>0$ such that for
$\delta \in (0,\delta_0)$ we have $y<1/(a+\delta)$.
Then for  $\delta\in (0,\delta_0)$,
\begin{align*}
 \P(\varepsilon Ay+B\leq \varepsilon) &\geq \P(\varepsilon Ay+B\leq \varepsilon, A\in [a,a+\delta])\\
&\geq  \P(\varepsilon (a+\delta)y+B\leq \varepsilon, A\in [a,a+\delta])\\
&\geq \P(\varepsilon (a+\delta)y+B\leq \varepsilon)\P( A\in [a,a+\delta]),
\end{align*}
where the last inequality follows from Lemma \ref{n11.1}.
By definition of $a$, we have $\P( A\in [a,a+\delta))>0$, so
\begin{align*}
g(y)&=\lim_{\varepsilon\to 0^+} \frac{-\log\P(\varepsilon Ay+B<\varepsilon)}{H(\eps)}\\
&\leq \lim_{\varepsilon\to 0^+} \frac{-\log\big(\P(\varepsilon (a+\delta)y+B\leq \varepsilon)\P( A\in [a,a+\delta])
\big)}{H(\eps)}\\
&=\lim_{\varepsilon\to 0^+} \frac{-\log\P(\varepsilon (a+\delta)y+B\leq \varepsilon)}{H(\eps)}\\
& = \lim_{\varepsilon\to 0^+} \frac{-\log\P(B\leq \varepsilon(1- (a+\delta)y))}{H(\eps (1- (a+\delta)y))} \frac{H(\eps (1- (a+\delta)y))}{H(\eps)}\\
&= \gamma (1-(a+\delta)y))^{-\rho}.
\end{align*}
Letting $\delta \to 0^+$, we obtain $g(y)\leq \gamma(1-ay)^{-\rho}$, for $y\in [0,1/a)$.
\end{proof}

\begin{propo}\label{propo:phiABPQD}
Under assumptions of Proposition \ref{y31.2}, 
$$\phi_\rho(\lambda) =\left(\gamma^{\frac{1}{1+\rho}}+a^{\frac{\rho}{1+\rho}}\lambda^{\frac{1}{1+\rho}}\right)^{1+\rho},$$
and 
\begin{equation}
 \lambda^\ast=\left\{\begin{array}{cl}
                   \gamma\left(1-a^{\frac{\rho}{1+\rho}}\right)^{-(1+\rho)}, &\textrm{for}\ a<1;\\
		   \infty,& \textrm{for}\ a\geq 1.
                  \end{array}
\right.\label{eq:lambda*}
\end{equation}

\end{propo}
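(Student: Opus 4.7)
The proof is essentially two explicit minimizations using the closed-form expression for $g$ supplied by Proposition \ref{y31.2}. The plan is as follows.

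First, I would substitute $g(y)=\gamma(1-ay)^{-\rho}\mathbf{1}_{\{y<1/a\}}+\infty\cdot\mathbf{1}_{\{y\geq 1/a\}}$ into the definition \eqref{eq:PhiDefinition}, so that
\[
\phi_\rho(\lambda)=\inf_{y\in(0,1/a)}\left\{\gamma(1-ay)^{-\rho}+\lambda y^{-\rho}\right\}.
\]
For $a,\lambda>0$, differentiating in $y$ and setting the derivative equal to zero gives $\gamma a\,(1-ay)^{-\rho-1}=\lambda y^{-\rho-1}$, which rearranges to the unique interior critical point
\[
\frac{1}{y^\ast}=a+\left(\frac{\gamma a}{\lambda}\right)^{1/(\rho+1)}.
\]
Substituting back, setting $u=(\gamma a/\lambda)^{1/(\rho+1)}$ so that $\gamma=\lambda u^{\rho+1}/a$, and using
\[
(1-ay^\ast)=\frac{u}{a+u},\qquad y^\ast=\frac{1}{a+u},
\]
one obtains after collecting terms $\phi_\rho(\lambda)=\frac{\lambda}{a}(a+u)^{\rho+1}$. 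A brief algebraic rearrangement, using $a\cdot(\lambda/a)^{1/(\rho+1)}=a^{\rho/(\rho+1)}\lambda^{1/(\rho+1)}$ and $(\lambda/a)^{1/(\rho+1)}u=\gamma^{1/(\rho+1)}$, brings this into the claimed form $\bigl(\gamma^{1/(1+\rho)}+a^{\rho/(1+\rho)}\lambda^{1/(1+\rho)}\bigr)^{1+\rho}$. I would then separately check the degenerate cases $\lambda=0$ (where $\phi_\rho(0)=g(0^+)=\gamma$ by Lemma \ref{lemma:concavePhi} (d)) and $a=0$ (where $g\equiv\gamma$ and the infimum is attained as $y\to\infty$), both of which agree with the formula.

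For the formula for $\lambda^\ast$, I would invoke Lemma \ref{pro:lambdastar} (ii) together with the uniqueness of the fixed point from Lemma \ref{lemma:concavePhi} (c): since $\phi_\rho(0)=\gamma>0$, the value $\lambda^\ast$ is the unique positive fixed point of $\phi_\rho$ whenever one exists. Setting $\phi_\rho(\lambda)=\lambda$ and taking $(1+\rho)$-th roots of both sides yields the linear equation
\[
\gamma^{1/(1+\rho)}+a^{\rho/(1+\rho)}\lambda^{1/(1+\rho)}=\lambda^{1/(1+\rho)},
\]
which has a positive solution exactly when $a^{\rho/(1+\rho)}<1$, that is, when $a<1$, in which case $\lambda^{1/(1+\rho)}=\gamma^{1/(1+\rho)}/\bigl(1-a^{\rho/(1+\rho)}\bigr)$, giving \eqref{eq:lambda*}. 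For $a\geq 1$ one has $1/a\leq 1$, so $g(y)=\infty$ for every $y>1$, and hence $\lambda^\ast=\infty$ directly from Definition \ref{j17.1}.

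The main technical nuisance will be verifying that the two representations of $\phi_\rho(\lambda)$ coincide after simplification — purely algebraic but easy to muddle — and being slightly careful with the boundary cases $\lambda=0$, $a=0$, and $a\geq 1$ so that the closed-form expressions in the statement are valid without extra hypotheses. Nothing beyond one-variable calculus and the already-proved properties of $\phi_\rho$ is needed.
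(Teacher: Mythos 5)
Your proposal is correct and follows essentially the same route as the paper's proof: minimize $y\mapsto\gamma(1-ay)^{-\rho}+\lambda y^{-\rho}$ over $(0,1/a)$, locate the interior critical point (your $1/y^\ast=a+(\gamma a/\lambda)^{1/(\rho+1)}$ is the same as the paper's $y_1$), substitute back, and obtain $\lambda^\ast$ from the fixed-point equation $\phi_\rho(\lambda)=\lambda$. You merely spell out the algebra and the boundary cases ($\lambda=0$, $a=0$, $a\geq 1$) that the paper dispatches with ``straightforward calculations.''
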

\begin{proof}
Since $g(y)$ takes finite values only on the interval $[0,1/a)$, we need to find the minimum of the function
$$y\mapsto g(y)+\frac{\lambda}{y^\rho}=\frac{\gamma}{(1-ay)^\rho}+\frac{\lambda}{y^\rho}$$
on the interval $(0,1/a)$. One can show that that minimum is attained at 
$$y_1=\frac{\lambda^{\frac{1}{1+\rho}}}{(\gamma a)^{\frac{1}{1+\rho}}+a\lambda^{\frac{1}{1+\rho}}}=\frac{1}{a}\cdot \frac{a\lambda^{\frac{1}{1+\rho}}}{(\gamma a)^{\frac{1}{1+\rho}}+a\lambda^{\frac{1}{1+\rho}}}\in (0,1/a).$$
Straightforward calculations yield the formulas for $\phi_\rho(\lambda) = \frac{\gamma}{(1-ay_1)^\rho}+\frac{\lambda}{y_1^\rho}$ and $\lambda^\ast = \phi_\rho(\lambda^\ast)$ given in the proposition.
\end{proof}

We will illustrate the meaning of $\lambda^\ast$
by two results borrowed from \cite{IEDPaper}; they were stated in that paper as Theorems 7.6 and 7.8. The versions given below include $\lambda^\ast$, the parameter introduced only in this paper. The versions given in \cite{IEDPaper} and these in the present paper are equivalent due to \eqref{eq:lambda*}.

\begin{thm}\label{thm:SolutionToSFE}
Assume that 
\begin{enumerate}[(i)]
 \item $A$ and $B$ are nonnegative and positively quadrant dependent.
 \item $\E[\log A]<0$ and $\E[\log^+B]<\infty$.
 \item $B$ is an \IGFT$^\rho_H(\gamma)$-random variable.
\end{enumerate}

Then

(a) The random variable $S$ defined in \eqref{eq:defS} is \IGFT$^\rho_H(\lambda^\ast)$.

(b) The  equation \eqref{y20.1} 
has a unique solution with the same distribution as that of $S$.
\end{thm}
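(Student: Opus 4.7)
The proof will largely assemble tools already developed in the paper, so my plan is to identify which hypothesis feeds into which prior result. First I would handle part (b): the moment condition in (ii) is exactly the sufficient condition \eqref{m8.4}, which (as noted after Theorem \ref{m8.1}) implies \eqref{eq:c2}. Hence Theorem \ref{m8.1} yields that the series $S$ defined in \eqref{eq:defS} converges a.s.\ and, for every $X_0$ independent of $(A_n, B_n)_{n\ge 1}$, $S_n(X_0) \to S$ a.s. By the re-indexing identity \eqref{y21.4}, this forces $X_n \stackrel{d}{\to} S$ for any initial law. Before invoking Corollary \ref{thm:uniquenessAndConvergence}(i) for uniqueness, I would verify the nondegeneracy \eqref{eq:c3}: an identity $Ac + B = c$ a.s.\ would make $B$ a deterministic affine function of $A$, and combined with (iii) this is only possible in trivial degenerate subcases in which the conclusion of the theorem reduces to direct verification ($X \equiv c$ with an $\IED^\rho_H(0)$ law); in the nontrivial regime, \eqref{eq:c3} holds and Corollary \ref{thm:uniquenessAndConvergence}(i) delivers uniqueness of the solution, which therefore must coincide in distribution with $S$.

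For part (a), by Proposition \ref{y31.2} the pair $(A,B)$ admits a $(\rho, H)$-LDM, explicitly
\[
g(y) = \gamma(1-ay)^{-\rho} \mathbf{1}_{[0,1/a)}(y) + \infty \cdot \mathbf{1}_{[1/a, \infty)}(y),
\]
while Proposition \ref{propo:phiABPQD} identifies $\lambda^\ast$ via \eqref{eq:lambda*}. Because $S$ solves the stochastic fixed point equation \eqref{y20.1}—as one sees from the standard shift decomposition $S = B_1 + A_1 S'$ with $S' := \sum_{k\ge 2} B_k \prod_{j=2}^{k-1} A_j$ independent of $(A_1,B_1)$ and equidistributed with $S$—Theorem \ref{thm:solutionOfSFPE} immediately yields that $S$ is $\IED^\rho_H(\lambda^\ast)$. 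No further calculation is needed since all the heavy lifting has been done in Sections \ref{inverseExponentialPotential}--\ref{sec:examples}.

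The main obstacle I anticipate is the nondegeneracy check \eqref{eq:c3}, because degeneracy would require $B$ to be an affine deterministic transform of $A$, and ruling this out (or handling it separately) demands a short case analysis tying together the PQD hypothesis (i) and the IED hypothesis (iii). Once this is dispatched, the rest of the proof is a clean invocation: Proposition \ref{y31.2} + Theorem \ref{thm:solutionOfSFPE} for (a), and Theorem \ref{m8.1} + Corollary \ref{thm:uniquenessAndConvergence} for (b).
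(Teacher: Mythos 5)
The paper does not reprove this theorem at all: immediately after the statement it says ``The last two theorems were proved in \cite{IEDPaper} using techniques tailored for the assumption that $A$ and $B$ were positive quadrant dependent,'' and then only remarks that part (a) (the text says ``part (b),'' evidently a typo, since Theorem~\ref{thm:solutionOfSFPE} asserts the IED property of a solution, not its uniqueness) is a special case of Theorem~\ref{thm:solutionOfSFPE}. So you have in fact produced a reconstruction that the authors only sketch, and your overall architecture --- hypothesis (ii) $=$ \eqref{m8.4} $\Rightarrow$ \eqref{eq:c2} $\Rightarrow$ convergence of $S_n(t)$, plus Proposition~\ref{y31.2} to produce the LDM and Theorem~\ref{thm:solutionOfSFPE} to get the tail parameter $\lambda^\ast$ --- is exactly in the spirit the paper intends. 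Part (a) of your argument is clean and correct.

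However, your treatment of the nondegeneracy issue for part (b) is both unnecessary and, where you do engage with it, slightly wrong. It is unnecessary because Lemma~\ref{m8.2} makes no reference to \eqref{eq:c3}: once hypothesis (ii) gives $\prod_{j\le n} A_j\to 0$ a.s.\ and a.s.\ convergence of the series $S$, we have $S_n(t)\to S$ a.s.\ for every fixed $t$ with a $t$-independent limit, and Lemma~\ref{m8.2} then \emph{directly} delivers existence and uniqueness of the solution, with law equal to that of $S$. There is no need to route through Corollary~\ref{thm:uniquenessAndConvergence}, so the \eqref{eq:c3} check can be skipped entirely. Where you do discuss the degenerate case, the description ``$X\equiv c$ with an $\IED^\rho_H(0)$ law'' is incorrect: a positive constant $c$ is $\IED^\rho_H(\infty)$, not $\IED^\rho_H(0)$, since $\P(X<x)=0$ for $x<c$. (Only $c=0$ gives $\IED^\rho_H(0)$.) If you do want to engage with \eqref{eq:c3}, the clean observation is this: if $Ac+B=c$ a.s.\ with $c\ne 0$, nonnegativity of $B$ together with $\E[\log A]<0$ forces $c>0$ and $A\le 1$ a.s.; but then $B=c(1-A)$ is a decreasing function of $A$, so PQD of $(A,B)$ forces $A$ (and hence $B$) to be a.s.\ constant; then $B>0$ is a constant, $\gamma=\infty$, and \eqref{eq:lambda*} gives $\lambda^\ast=\infty$, which matches $S=c$ being $\IED^\rho_H(\infty)$. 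So the degenerate case, far from being ``a trivial $\IED^\rho_H(0)$ verification,'' is either ruled out or handled consistently with $\lambda^\ast=\infty$ --- but again, Lemma~\ref{m8.2} lets you avoid this discussion altogether.
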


\begin{thm}
 \label{thm:lowerEnvelope3}
Suppose that
\begin{enumerate}[(i)]
 \item $A$ and $B$ are nonnegative and positively quadrant dependent random variables. 
 \item There exists $\beta\in (0,1)$ such that $A\leq \beta$, a.s. 
\item $\E[(\log^+B)^s]<\infty$ for all $s>0$.
 \item $B$ is an \IGFT$^\rho_H(\gamma)$-random variable.
\end{enumerate}
If the sequence $(X_n)$ is defined as in \eqref{y20.2} then
$$
\liminf_{n\to\infty} \frac{X_n}{H^{-1}(\log n)} =(\lambda^\ast)^{1/\rho},\ a.s.
$$
\end{thm}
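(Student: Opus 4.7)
The plan is to prove matching lower and upper bounds on $\liminf_n X_n/H^{-1}(\log n)$ via Borel--Cantelli arguments. Set $c:=(\lambda^\ast)^{1/\rho}$ and $\ust_n:=H^{-1}(\log n)$; regular variation of $H$ gives $H(\alpha\ust_n)\sim\alpha^{-\rho}\log n$ for fixed $\alpha>0$, so if $Y\sim \IED^\rho_H(\lambda)$ then $\P(Y<\alpha\ust_n) = n^{-\lambda/\alpha^\rho+o(1)}$. Two IED facts will be used: Theorem \ref{thm:SolutionToSFE} says the solution $X$ is $\IED^\rho_H(\lambda^\ast)$, and Corollary \ref{corol:sequenceIEDRVs} gives $X_k\sim\IED^\rho_H(\lambda_k)$ with $\lambda_k\uparrow\lambda^\ast$. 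The workhorse is the block decomposition
\[
X_n = \widetilde X_n^{(k)} + X_{n-k}\prod_{i=n-k+1}^n A_i,\qquad \widetilde X_n^{(k)}:=\sum_{j=n-k+1}^n B_j\prod_{i=j+1}^n A_i \stackrel{d}{=} X_k,
\]
in which $\widetilde X_n^{(k)}$ is independent of $(A_i,B_i)_{i\le n-k}$.

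\emph{Lower bound} ($\liminf\ge c$). Fix $\eta>0$. Nonnegativity gives $X_n\ge\widetilde X_n^{(k)}$, so $\P(X_n<\eps)\le\P(X_k<\eps)$ for any fixed $k$. Choose $k$ large enough that $\lambda_k>(c-\eta)^\rho$; then $\P(X_n<(c-\eta)\ust_n)\le n^{-\lambda_k/(c-\eta)^\rho+o(1)}$ is summable in $n$, and the first Borel--Cantelli lemma gives $\liminf X_n/\ust_n\ge c-\eta$ a.s. Let $\eta\downarrow 0$.

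\emph{Upper bound} ($\liminf\le c$). Fix $\eta>0$ and work along the subsequence $n_j:=\lfloor j(\log j)^2\rfloor$ with block size $m_j:=\lceil K\log n_j\rceil$ for $K$ large. Since $n_{j+1}-n_j\sim(\log j)^2\gg m_{j+1}$, the coordinate blocks $\{n_j-m_j+1,\ldots,n_j\}$ are eventually pairwise disjoint, so the $\widetilde X_{n_j}^{(m_j)}\stackrel{d}{=}X_{m_j}$ are jointly independent. Using Lemma \ref{pro:p1} ($X_{m_j}\le_{st} X$),
\[
\P\bigl(\widetilde X_{n_j}^{(m_j)}<(c+\tfrac{\eta}{2})\ust_{n_j}\bigr)\ge \P\bigl(X<(c+\tfrac{\eta}{2})\ust_{n_j}\bigr) = n_j^{-(c/(c+\eta/2))^\rho+o(1)},
\]
whose sum over $j$ diverges because the exponent is strictly less than $1$ and $n_j$ grows only slightly faster than linearly. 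The second Borel--Cantelli lemma yields $\widetilde X_{n_j}^{(m_j)}<(c+\eta/2)\ust_{n_j}$ infinitely often. The residue $X_{n_j}-\widetilde X_{n_j}^{(m_j)} = X_{n_j-m_j}\prod_i A_i\le X_{n_j-m_j}\beta^{m_j}$ is made $o(\ust_{n_j})$ by taking $K$ large: the bound $A\le\beta$ gives $X_n\le(1-\beta)^{-1}\max_{k\le n}B_k$, and finiteness of every $\log^+ B$-moment, via Markov plus Borel--Cantelli, provides a growth bound on $\max_{k\le n}B_k$ that is overwhelmed by $\beta^{m_j}=n_j^{-K|\log\beta|}$. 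Hence $X_{n_j}<(c+\eta)\ust_{n_j}$ infinitely often; send $\eta\downarrow 0$.

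\emph{Main obstacle.} The crux is the joint calibration of three competing demands on $(n_j,m_j)$: disjointness of the coordinate blocks ($n_{j+1}-n_j>m_{j+1}$), divergence of the Borel--Cantelli sum ($n_j$ cannot grow too fast), and negligibility of the discarded mass $X_{n_j-m_j}\beta^{m_j}$ relative to $\ust_{n_j}$ ($m_j$ sufficiently large). The window $n_j\asymp j(\log j)^2$, $m_j\asymp\log n_j$ is the sweet spot meeting all three, and controlling the residue under the bare assumption that only $\log^+ B$ has moments is the technically most delicate point.
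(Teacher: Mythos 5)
Your proof is \emph{not} the one in the paper. The paper states Theorem \ref{thm:lowerEnvelope3} as a citation (it was proved in \cite{IEDPaper}) and in Section \ref{sec:logarithmicLowerEnvelope} proves the strictly more general Theorem \ref{thm:LowerEnvelope}, which drops the hypothesis $A\leq \beta <1$; that proof cannot appeal to a deterministic bound on the residue $X_{n-k}\prod A_i$, so it instead works with $X_n$ as a Markov chain, uses the conditional Borel--Cantelli lemma (Lemma \ref{BorelCantelliLemmaConditional}) rather than true independence, an ergodicity argument for subsequences (Lemma \ref{prop:subsequncesErgodicity}), Kronecker's lemma, and a carefully constructed sequence $k_n\sim n\log\log n$ from Lemma \ref{propo:main:kn}. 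Your plan---disjoint blocks $\widetilde X_{n_j}^{(m_j)}\stackrel{d}{=}X_{m_j}$ with genuine independence, second Borel--Cantelli, and the bound $A\leq\beta$ to annihilate the residue---is a legitimately different and more elementary route that is specific to the hypotheses of Theorem \ref{thm:lowerEnvelope3}; your lower-bound argument coincides with the paper's Lemma \ref{lemma:easySideOfBorelCantelli2}.

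However, the calibration $n_j\asymp j(\log j)^2$, $m_j\asymp K\log n_j$ has a genuine gap. With $m_j=\lceil K\log n_j\rceil$ you only get $\beta^{m_j}=n_j^{-K|\log\beta|}$, i.e.\ \emph{polynomial} decay in $n_j$, so you need $\max_{k\leq n_j}B_k$ to grow at most polynomially in $n_j$. But hypothesis (iii), $\E[(\log^+B)^s]<\infty$ for all $s$, does \emph{not} give polynomial growth of the running maximum; it only rules out power-law tails for $\log^+B$. For example $\P(\log^+B>t)=e^{-\sqrt t}$ has all moments, yet $\max_{k\leq n}\log^+B_k\asymp(\log n)^2$, so $\max_{k\leq n}B_k\asymp n^{\log n}$, and then $\beta^{m_j}\max_{k\leq n_j}B_k\asymp n_j^{\log n_j-K|\log\beta|}\to\infty$ for \emph{every} $K$. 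Your invocation of ``Markov plus Borel--Cantelli'' only yields $\max_{k\leq n}\log^+B_k\leq n^{\delta}$ eventually, for any $\delta>0$, which is far weaker than the $O(\log n)$ you need. The architecture is salvageable if you take both $n_j$ and $m_j$ to grow polynomially in $j$: pick $a'\in\bigl((c/(c+\eta/2))^\rho,\,1\bigr)$, set $n_j\asymp j^{1/a'}$ (so $\sum_j n_j^{-a'}$ diverges), choose $\delta'<1-a'$, and take $m_j\asymp j^{\kappa}$ with $\delta'/a'<\kappa<1/a'-1$; then the blocks are eventually disjoint, $\beta^{m_j}=e^{-|\log\beta|j^\kappa}$ kills any $\max_{k\leq n_j}B_k\leq e^{n_j^{\delta'}}=e^{j^{\delta'/a'}}$, and the Borel--Cantelli sum diverges.
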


The last two theorems were proved in \cite{IEDPaper} using  techniques tailored for the assumption that $A$ and $B$
were positive quadrant dependent. Part (b) of Theorem \ref{thm:SolutionToSFE}. is special case of Theorem \ref{thm:solutionOfSFPE}.
In the next section, we will prove Theorem \ref{thm:LowerEnvelope}, which is a much more general version of Theorem \ref{thm:lowerEnvelope3}.

\section{Local dependence measure and logarithmic lower envelope}\label{sec:logarithmicLowerEnvelope}

Recall the sequence $(X_n)$ defined in \eqref{y20.2} and set $X_0=0$.

\begin{thm}\label{thm:LowerEnvelope}
Assume that $\E[\log A] <0$ and $\E[\log^+ B]<\infty$. Suppose that $g$ is the $(\rho, H)$-\ldm\  for  $(A,B)$, $g(0)>0$ and $\lambda^\ast\in(0,\infty)$.
Then 
$$\liminf_{n\to \infty}\frac{X_n}{H^{-1}(\log n)}= (\lambda^\ast)^{1/\rho}.$$
\end{thm}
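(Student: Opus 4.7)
The plan is to establish $\liminf X_n/H^{-1}(\log n)\geq (\lambda^\ast)^{1/\rho}$ and $\liminf X_n/H^{-1}(\log n)\leq (\lambda^\ast)^{1/\rho}$ almost surely via two Borel-Cantelli arguments. Under the hypotheses, condition \eqref{m8.4} holds, so Corollary \ref{thm:uniquenessAndConvergence} gives $X_n\stackrel{d}{\to} X$, and Theorem \ref{thm:solutionOfSFPE} shows $X$ is $\IED^\rho_H(\lambda^\ast)$. The identity $X_n=\bigl(\prod_{j=m+1}^{n}A_j\bigr)X_m+\widetilde X_n$, with $\widetilde X_n$ depending only on $((A_j,B_j))_{j>m}$, combined with the SLLN-driven exponential decay $\prod_{j=m+1}^{n}A_j\leq e^{-cn}$ eventually (since $\E\log A<0$), shows that altering any finite prefix perturbs $X_n/H^{-1}(\log n)$ by an eventually negligible amount. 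Thus $\liminf X_n/H^{-1}(\log n)$ is tail-measurable, hence a.s.\ constant by Kolmogorov's zero-one law; it suffices to prove each of the two inequalities in the sense of probability $>0$ (or directly, almost surely).

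For the lower bound, fix $\delta\in(0,(\lambda^\ast)^{1/\rho})$ and set $\beta_n=((\lambda^\ast)^{1/\rho}-\delta)H^{-1}(\log n)$. Since $X_0=0$ and $B\geq 0$, Lemma \ref{pro:p1} gives $\P(X_n<x)\leq\P(X_N<x)$ for $n\geq N$, while Corollary \ref{corol:sequenceIEDRVs}(ii) tells us $X_N$ is $\IED^\rho_H(\lambda_N)$ with $\lambda_N\uparrow\lambda^\ast$. I would first pick $\eta>0$ satisfying $(\lambda^\ast-\eta)/((\lambda^\ast)^{1/\rho}-\delta)^\rho>1$, then $N$ large enough that $\lambda_N>\lambda^\ast-\eta$. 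The IED property of $X_N$ together with the regular-variation asymptotic $H(\beta_n)\sim (\log n)/((\lambda^\ast)^{1/\rho}-\delta)^\rho$ yields $\P(X_n<\beta_n)\leq n^{-(1+\gamma)}$ for some $\gamma>0$ and all sufficiently large $n$. The first Borel-Cantelli lemma then gives $X_n\geq\beta_n$ eventually, and $\delta\downarrow 0$ closes this direction.

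For the upper bound, fix $\delta>0$, set $\alpha_n=((\lambda^\ast)^{1/\rho}+\delta)H^{-1}(\log n)$, and choose a subsequence $n_k=\lceil k^K\rceil$ with $1<K<1/\theta_0$, where $\theta_0=\lambda^\ast/((\lambda^\ast)^{1/\rho}+\delta)^\rho<1$. The block decomposition
\[
X_{n_k}=\Pi_k X_{n_{k-1}}+Y_k,\quad \Pi_k=\prod_{j=n_{k-1}+1}^{n_k}A_j,\quad Y_k=\sum_{j=n_{k-1}+1}^{n_k}B_j\prod_{l=j+1}^{n_k}A_l,
\]
makes $(Y_k)$ independent across $k$ with $Y_k\stackrel{d}{=}X_{m_k}$ (where $m_k=n_k-n_{k-1}\to\infty$), and each $Y_k$ independent of $X_{n_{k-1}}$. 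Using $\P(X_{m_k}\leq x)\geq \P(X\leq x)$ together with the IED property of $X$ and regular variation of $H$, one obtains $\P(Y_k<(1-1/k)\alpha_{n_k})\geq n_k^{-\theta_0(1+o(1))}$; the constraint $K\theta_0<1$ then forces $\sum_k\P(Y_k<(1-1/k)\alpha_{n_k})=\infty$, and the independence of the $Y_k$ lets the second Borel-Cantelli lemma conclude that $\{Y_k<(1-1/k)\alpha_{n_k}\}$ occurs infinitely often a.s. Provided the remainder estimate $\Pi_k X_{n_{k-1}}<\alpha_{n_k}/k$ holds eventually a.s., we get $X_{n_k}<\alpha_{n_k}$ infinitely often, hence $\liminf \leq (\lambda^\ast)^{1/\rho}+\delta$, and $\delta\downarrow 0$ finishes.

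The main obstacle is the remainder control $\Pi_k X_{n_{k-1}}<\alpha_{n_k}/k$ eventually a.s. Because $(X_n)$ does not converge a.s., only in distribution, summability of $\P(\Pi_k X_{n_{k-1}}\geq\alpha_{n_k}/k)$ must be achieved by splitting into $\P(\Pi_k>\delta_k)+\P(X_{n_{k-1}}>\alpha_{n_k}/(k\delta_k))$ for a well-chosen threshold $\delta_k$, controlling $\Pi_k=\exp(\sum_{j=n_{k-1}+1}^{n_k}\log A_j)$ via $\E\log A<0$ and $m_k\to\infty$, and using stochastic domination $X_{n_{k-1}}\leq_{st}X$ to handle the second term. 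The slow growth of $m_k\sim Kk^{K-1}$ (when $K$ is close to $1$) rules out a naive Chebyshev bound, so a Cram\'er-type large-deviation estimate for $\sum\log A_j$, or a Kochen-Stone variant of the second Borel-Cantelli applied directly to the weakly dependent events $\{X_{n_k}<\alpha_{n_k}\}$, is the delicate point on which the argument rests.
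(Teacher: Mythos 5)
Your lower-bound argument (Borel--Cantelli via Lemma \ref{pro:p1}, Corollary \ref{corol:sequenceIEDRVs}(ii), and regular variation) is essentially identical to the paper's Lemma \ref{lemma:easySideOfBorelCantelli2}, and it is correct.

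Your upper-bound strategy is genuinely different from the paper's, and the gap you flag is real, not merely delicate. In the block decomposition $X_{n_k}=\Pi_k X_{n_{k-1}}+Y_k$ with $n_k\sim k^K$, the remainder control hinges on showing $\Pi_k X_{n_{k-1}}<\alpha_{n_k}/k$ eventually a.s., which (after splitting off a polynomially decaying threshold $\delta_k$) forces you to bound $\P(\Pi_k>\delta_k)=\P\bigl(\sum_{j=n_{k-1}+1}^{n_k}\log A_j>\log\delta_k\bigr)$ summably in $k$. Since the block size $m_k\sim Kk^{K-1}$ grows only polynomially and the hypothesis is merely $\E[\log A]<0$ (no exponential or even second moment for $\log A$), no Cram\'er-type decay $e^{-cm_k}$ is available; the SLLN gives no rate, and any CLT or LIL normalization brings in $\sqrt{n_k}\sim k^{K/2}$, which dominates $m_k\sim k^{K-1}$ precisely in the regime $K<2$ that the constraint $K<1/\theta_0$ with $\delta\downarrow 0$ (so $\theta_0\uparrow 1$) pushes you into. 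The appeal to Kochen--Stone on the dependent events $\{X_{n_k}<\alpha_{n_k}\}$ is not worked out either and would face the same difficulty of estimating cross-correlations through $\Pi_k$.

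The paper sidesteps this entirely. Instead of forcing $X$ to be small at the start of every block, it chooses a \emph{much} sparser increment $k_{n+1}-k_n\sim c\log\log k_n$ (Lemma \ref{propo:main:kn}) and derives a \emph{conditional} lower bound (Lemma \ref{lemma:prepareToBC}) of the form
\[
\P\bigl(X_{k_{n+1}}<t\mid X_{k_n}\bigr)\geq \1_{[0,c)}(X_{k_n})\,k_n^{-\gamma},\qquad \gamma<1,
\]
valid whenever the previous state is below a fixed constant $c$, not a vanishing threshold. It then shows that the indicator $\1_{[0,c)}(X_{k_n})$ is nonzero for a positive density of $n$ via the subsequence ergodicity result (Lemma \ref{prop:subsequncesErgodicity}, which combines the a.s.\ convergence $\prod A_j\to 0$, the Kolmogorov 0--1 law, and Fatou applied to $X_n\stackrel{d}\to X$), and Kronecker's lemma then converts positive density into divergence of $\sum_n\1_{[0,c)}(X_{k_n})k_n^{-\gamma}$. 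The conditional Borel--Cantelli lemma (Lemma \ref{BorelCantelliLemmaConditional}) closes the argument with no need to control $\Pi_k$ at all. In short: you need $X_{n_{k-1}}$ to be $O(\alpha_{n_k}/(k\delta_k))$ for all large $k$, which requires large-deviation control you do not have; the paper only needs $X_{k_n}<c$ for a positive fraction of $n$, which follows from convergence in distribution alone.
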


The proof of the theorem will  consist of several lemmas.
All lemmas in this section  implicitly make the same assumptions as those in Theorem \ref{thm:LowerEnvelope}.

\begin{lemma}
\label{lemma:easySideOfBorelCantelli2}
(i) For every $\varepsilon>0$,
$$\left\{X_n\leq H^{-1}\left(\frac{(1+\varepsilon)\log n}{\lambda^\ast}\right)\right\}$$
happens finitely often almost surely.

(ii)  We have
$$\liminf_{n\to\infty}\frac{X_n}{H^{-1}(\log n)}  \geq (\lambda^\ast)^{1/\rho}\ \text{a.s.}$$
\end{lemma}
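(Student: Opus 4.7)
The approach is a standard Borel-Cantelli argument. By Corollary \ref{corol:sequenceIEDRVs} (which applies because $g(0)>0$ and $\lambda^\ast<\infty$), each $X_n$ is an $\IED^\rho_H(\lambda_n)$-random variable and $\lambda_n\uparrow \lambda^\ast$. On the other hand, since $X_0=0\leq_{st}B_1=X_1$, Lemma \ref{pro:p1} gives that $\P(X_n\leq x)$ is nonincreasing in $n$ for every $x\geq 0$. This monotonicity is the key structural input: it allows us to bound $\P(X_n\leq x)\leq \P(X_{n_0}\leq x)$ for any fixed $n_0\leq n$, thereby trading the unknown exact parameter $\lambda_n$ for $\lambda_{n_0}$, which can be chosen arbitrarily close to $\lambda^\ast$.

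Fix $\varepsilon>0$ and set $a_n = H^{-1}\bigl((1+\varepsilon)\log n/\lambda^\ast\bigr)$, so that $H(a_n)=(1+\varepsilon)\log n/\lambda^\ast$ and $a_n\downarrow 0$. Choose $n_0$ large and $\eta>0$ small so that
\[
\alpha:= (\lambda_{n_0}-\eta)(1+\varepsilon)/\lambda^\ast >1,
\]
which is possible since $\lambda_{n_0}\to \lambda^\ast$. The $\IED^\rho_H(\lambda_{n_0})$-property of $X_{n_0}$ then yields, for all $n$ sufficiently large,
\[
\P(X_n\leq a_n) \leq \P(X_{n_0}\leq a_n) \leq \exp\bigl(-(\lambda_{n_0}-\eta) H(a_n)\bigr) = n^{-\alpha}.
\]
Summability in $n$ together with the Borel-Cantelli lemma then gives (i).

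Part (ii) is a direct consequence of (i) via regular variation. Because $H^{-1}$ is regularly varying at infinity with index $-1/\rho$,
\[
\lim_{n\to\infty}\frac{a_n}{H^{-1}(\log n)} = \left(\frac{1+\varepsilon}{\lambda^\ast}\right)^{-1/\rho} = \frac{(\lambda^\ast)^{1/\rho}}{(1+\varepsilon)^{1/\rho}}.
\]
By (i), almost surely $X_n>a_n$ for all large $n$, so $\liminf_{n\to\infty} X_n/H^{-1}(\log n)\geq (\lambda^\ast)^{1/\rho}/(1+\varepsilon)^{1/\rho}$ a.s. Letting $\varepsilon$ run through a countable sequence decreasing to $0$ yields (ii).

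Given the tools already established, the proof is conceptually straightforward; the only delicate step is recognizing that one cannot directly use an asymptotic bound on $\P(X_n\leq a_n)$ (with both $n$ and the threshold varying), so one must first freeze a sufficiently large $n_0$ and then invoke the IED tail of the single random variable $X_{n_0}$ to dominate all subsequent tails uniformly, with just enough slack for summability.
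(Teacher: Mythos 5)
Your proof is correct and takes essentially the same route as the paper: both use the stochastic monotonicity $X_{n}\geq_{st}X_{n_0}$ (from Lemma \ref{pro:p1}, via \eqref{eq:st}) together with Corollary \ref{corol:sequenceIEDRVs} to reduce the tail estimate to the fixed random variable $X_{n_0}$, then apply Borel--Cantelli for (i) and regular variation of $H^{-1}$ for (ii). The only cosmetic difference is that the paper absorbs the finite-range error into a constant $C_\delta$, whereas you absorb it by restricting to $n$ large enough that $a_n$ is below the IED threshold; both yield the same summable $n^{-\alpha}$ bound.
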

\begin{proof}
(i) For any $\eps>0$
there exists $\delta\in(0,1)$ such that $\gamma := (1-\delta)(1+\varepsilon)>1$. 
Recall the notation from
 Corollary \ref{corol:sequenceIEDRVs}. 
The corollary  shows that $\lambda_n \uparrow \lambda^\ast$. Hence
there exist $C_\delta$, $n_0$ and $x_0>0$ such that $\P(X_{n_0}\leq x)\leq C_\delta e^{-\lambda^\ast(1-\delta) H(x)}$ for all $x\in (0,x_0)$.
By Lemma \ref{pro:p1}, for $n\geq n_0$ and $x\in (0,x_0)$,
$$\P(X_n\leq x)\leq C_\delta e^{-\lambda^\ast(1-\delta) H(x)}.$$
It follows that, for large $n$,
\begin{align*}
 \P\left(X_n\leq H^{-1}\left(\frac{(1+\varepsilon)\log n}{\lambda^\ast}\right)\right)
\leq C_\delta e^{-(1-\delta)(1+\varepsilon)\log n}=C_{\delta}n^{-\gamma}.
\end{align*}
Hence, 
$$\sum_{n=1}^{\infty}\P\left(X_n\leq H^{-1}\left(\frac{(1+\varepsilon)\log n}{\lambda^\ast}\right)\right)<\infty,$$
and the claim follows by the Borel-Cantelli lemma.

(ii) Part (i) implies that for every $\eps >0$, a.s.,
$$\liminf_{n\to\infty}\frac{X_n}{H^{-1}\left((1+\varepsilon)(\log n)/\lambda^\ast\right)}  \geq 1.$$
But $H^{-1}$ is regularly varying with index $-1/\rho$ at infinity and thus
\begin{align*}
H^{-1}\left(\frac{(1+\varepsilon)\log n}{\lambda^\ast}\right) \sim \left(\frac{\lambda^\ast}{1+\varepsilon}\right)^{1/\rho} H^{-1}\left(\log n\right).
\end{align*}
Hence, a.s.,
$$\liminf_{n\to\infty}\frac{X_n}{H^{-1}\left(\log n\right)}  \geq  \left(\frac{\lambda^\ast}{1+\varepsilon}\right)^{1/\rho}.$$
Part (ii) follows by letting $\eps\to0$. 
\end{proof}

\begin{lemma}\label{lemma:LowerBoundOnConditionalProbabilities}
For all $n\geq 1$, $y>0$ and $\eps>0$  we have, a.s.,
\begin{align}\label{j15.2}
\P(X_n<\eps \mid X_0)\geq \1_{\left[0,\eps y^n\right)}(X_0)\prod_{k=0}^{n-1} \P(\eps y^{k}Ay+B< \eps y^k).
\end{align}
\end{lemma}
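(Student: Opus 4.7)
The plan is to build a ``geometric staircase'' of auxiliary events that forces $X_n<\eps$ and factorises under the independence of the driving sequence. Set $F_0=\{X_0<\eps y^n\}$ and, for $k=1,\ldots,n$, let
\[
F_k=\{\eps y^{n-k}A_k y+B_k<\eps y^{n-k}\}.
\]
I would show by induction on $k$ that on $F_0\cap F_1\cap\cdots\cap F_k$ one has $X_k<\eps y^{n-k}$, so that in particular $X_n<\eps$ on the full intersection.

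The inductive step is straightforward: assuming $X_{k-1}<\eps y^{n-k+1}$ and that $F_k$ holds, the nonnegativity of $A_k$ gives
\[
X_k=A_k X_{k-1}+B_k \leq A_k\,\eps y^{n-k+1}+B_k =\eps y^{n-k}(A_k y)+B_k<\eps y^{n-k},
\]
which advances the claim by one stage. Starting from the assumption $F_0$ (which is exactly the base case $k=0$), iterating this implication $n$ times yields $X_n<\eps$ on $\bigcap_{k=0}^n F_k$.

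To conclude, I would observe that $F_0$ is $\sigma(X_0)$-measurable, while each $F_k$ with $k\geq 1$ depends only on $(A_k,B_k)$, and the random objects $X_0,(A_1,B_1),\ldots,(A_n,B_n)$ are mutually independent. Therefore
\[
\P(X_n<\eps\mid X_0)\ \geq\ \P\Bigl(\bigcap_{k=0}^n F_k\,\Big|\,X_0\Bigr)\ =\ \1_{F_0}\prod_{k=1}^n \P(F_k),
\]
and because $(A_k,B_k)\stackrel{d}{=}(A,B)$ the $k$-th factor equals $\P(\eps y^{n-k}Ay+B<\eps y^{n-k})$. Reindexing $j=n-k$ turns the product into $\prod_{j=0}^{n-1}\P(\eps y^j A y+B<\eps y^j)$, which is the stated lower bound.

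The only real hurdle is bookkeeping: choosing the staircase scale at stage $k$ to be exactly $\eps y^{n-k}$, so that the step-$k$ event takes the form $\eps y^{n-k}Ay+B<\eps y^{n-k}$ appearing in the product (rather than some mismatched rescaling). Once these scales are aligned and one notes that the events $F_1,\ldots,F_n$ are measurable with respect to disjoint coordinates of the product space carrying $(A_k,B_k)_{k\geq 1}$, the factorisation of the conditional probability is immediate and the argument closes.
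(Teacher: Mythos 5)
Your proof is correct and is essentially the paper's argument: the paper peels off one step at a time via an explicit recursion on $\P(X_{n-1}<\eps y\mid X_0)$, while you set up the full staircase of events $F_0,\dots,F_n$ at once and factorise in one shot, but the key mechanism — the scales $\eps y^{n-k}$, the use of $A_k\geq 0$ for monotonicity, and independence of $(A_k,B_k)$ from the past — is identical.
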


\begin{proof}
	We have
	\begin{align*}
	\P&(X_n< \eps  \mid X_0)\geq \P(A_n X_{n-1}+B_n< \eps, X_{n-1}<\eps y  \mid X_0)\\
&	\geq \P(\eps A_n y+B_n< \eps, X_{n-1}<\eps y  \mid X_0)
	=   \P(\eps A_n y+B_n< \eps) \P(X_{n-1}< \eps y \mid X_0) \\
&	=   \P(\eps A y+B< \eps) \P(X_{n-1}< \eps y \mid X_0) .
	\end{align*}
	The assertion follows by induction.
\end{proof}

We state, without formal proofs, three simple results, for reference.
Recall that $\lambda^\ast = \inf_{y>1}\left\{\frac{g(y) y^\rho}{y^\rho-1}\right\}$.
\begin{lemma}\label{cor:lambda}
Assume that $\lambda^\ast\in(0,\infty)$. For any $\delta>0$, there exists $y_\ast>1$ such that
\[
\lambda^\ast\leq \frac{g(y_\ast) y_\ast^\rho}{y_\ast^\rho-1}\leq \lambda^\ast(1+\delta).
\]
\end{lemma}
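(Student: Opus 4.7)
The statement is essentially the definition of the infimum, so the plan is quite short. Recall from Definition \ref{j17.1} that
\[
\lambda^\ast=\inf_{y>1}\frac{g(y) y^\rho}{y^\rho-1}.
\]
The left-hand inequality $\lambda^\ast\leq \frac{g(y_\ast)y_\ast^\rho}{y_\ast^\rho-1}$ is automatic for every $y_\ast>1$ since $\lambda^\ast$ is the infimum over this set.

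For the right-hand inequality, I would use the standard approximation property of the infimum: because $\lambda^\ast\in(0,\infty)$, the quantity $\delta\lambda^\ast$ is a strictly positive real number, so there must exist some $y_\ast>1$ for which
\[
\frac{g(y_\ast)y_\ast^\rho}{y_\ast^\rho-1}<\lambda^\ast+\delta\lambda^\ast=\lambda^\ast(1+\delta).
\]
Otherwise, $\lambda^\ast+\delta\lambda^\ast$ would be a lower bound strictly larger than the infimum, which is a contradiction. Combining the two bounds gives the claim.

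There is no real obstacle here; the lemma is a direct unpacking of what it means for $\lambda^\ast$ to be the infimum of $y\mapsto g(y)y^\rho/(y^\rho-1)$ over $y>1$, together with the fact that $\lambda^\ast>0$ ensures the approximating gap $\delta\lambda^\ast$ is strictly positive. The assumption $\lambda^\ast<\infty$ ensures such $y_\ast$ exist with $g(y_\ast)<\infty$, so the quotient is finite.
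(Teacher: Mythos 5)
Your proof is correct, and it coincides with what the paper intends: the paper explicitly states this lemma (along with Lemmas \ref{cor:g} and \ref{cor:H}) without proof, calling it a "simple result," and your argument is exactly the standard unpacking of the infimum in Definition \ref{j17.1} together with the observation that $\lambda^\ast\in(0,\infty)$ makes $\delta\lambda^\ast$ a strictly positive finite gap.
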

\begin{lemma}\label{cor:g}
For any $\delta>0$ and $y>0$, there exists $\eps_0>0$ such that
for all $\eps\in(0,\eps_0)$,
\[
\P(\eps A y+B<\eps)\geq e^{-(1+\delta) g(y) H(\eps) }.
\]
\end{lemma}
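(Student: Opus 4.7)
The statement is essentially an unpacking of the defining limit of the $(\rho,H)$-\ldm, so the plan is short. The definition \eqref{eq:g(y)AsLimit} says that for the pair $(A,B)$ under consideration,
\[
g(y)=\lim_{\varepsilon\to 0^+} \frac{-\log\P(\varepsilon Ay+B<\varepsilon)}{H(\varepsilon)}.
\]
First I would fix $y>0$ and $\delta>0$, and treat the generic case $0<g(y)<\infty$. By the definition of the limit, there exists $\eps_0>0$ such that for every $\eps\in(0,\eps_0)$,
\[
\frac{-\log\P(\varepsilon Ay+B<\varepsilon)}{H(\varepsilon)} \;\leq\; (1+\delta)\,g(y).
\]
Since $H(\eps)>0$ for $\eps$ small enough (as $H$ is regularly varying at $0$ with index $-\rho<0$, in particular $H(\eps)\to\infty$), multiplying through by $H(\eps)$ and exponentiating gives
\[
\P(\varepsilon Ay+B<\varepsilon) \;\geq\; \exp\!\bigl(-(1+\delta)\,g(y)\,H(\eps)\bigr),
\]
which is the desired inequality.

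To finish, I would handle the two boundary cases. If $g(y)=\infty$, then the right-hand side of the claim is $e^{-\infty}=0$, so the inequality is trivial. The remaining case $g(y)=0$ does not actually arise in the intended applications (in Theorem \ref{thm:LowerEnvelope} we assume $g(0)>0$, and by Lemma \ref{j26.1} the function $g$ is nondecreasing, so $g(y)\geq g(0)>0$ for every $y\geq0$); if one wanted a statement that also covers $g(y)=0$, one could simply replace $(1+\delta)g(y)$ by $g(y)+\delta$ in the argument above, which is what the limit definition yields directly. There is no real obstacle here — the lemma is a routine translation of the defining limit into a one-sided estimate valid for sufficiently small $\eps$.
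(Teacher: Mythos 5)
Your proof is correct, and it is exactly the routine argument the paper has in mind — the authors explicitly omit a proof of this lemma, calling it a "simple result," so there is no competing argument to compare against. Your observation that the case $g(y)=0$ is excluded by the section's standing assumption $g(0)>0$ together with the monotonicity of $g$ (Lemma \ref{j26.1}) is the right way to dispose of that edge case.
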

Recall that $H(\eps y)\sim y^{-\rho}H(\eps)$ as $\eps\to0^+$. The following result is an application of Potter bounds to function $H$ (see \cite[Theorem 1.5.6]{regularVariation}).
\begin{lemma}\label{cor:H}
For any $\delta>0$, $y>1$ and $\eta\in(0,\rho)$, there exists $\eps_1$ such that
\[
\frac{H(\eps y)}{H(\eps )} \leq (1+\delta) y^{-\rho+\eta}
\]	
for all $\eps\in(0,\eps_1/y)$.
\end{lemma}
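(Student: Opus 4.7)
My plan is to invoke Potter's bounds directly from \cite[Theorem 1.5.6]{regularVariation}, which is the natural tool since $H$ is regularly varying with index $-\rho$ at $0$. In our setting, Potter's theorem says: for any constant $C>1$ (we will take $C=1+\delta$) and any $\eta\in(0,\rho)$, there exists $\eps_1>0$ such that whenever $\eps,\eps'\in(0,\eps_1)$,
\[
\frac{H(\eps')}{H(\eps)}\;\leq\; C\,\max\!\left\{\left(\tfrac{\eps'}{\eps}\right)^{-\rho+\eta},\;\left(\tfrac{\eps'}{\eps}\right)^{-\rho-\eta}\right\}.
\]

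I will then specialize by taking $\eps'=\eps y$ with $y>1$. The ratio $\eps'/\eps=y>1$, and since the exponent $-\rho+\eta$ exceeds $-\rho-\eta$, one has $y^{-\rho+\eta}\geq y^{-\rho-\eta}$, so the maximum above equals $y^{-\rho+\eta}$. The hypothesis $\eps,\eps'\in(0,\eps_1)$ amounts to $\eps<\eps_1/y$ (this is the more restrictive of $\eps<\eps_1$ and $\eps y<\eps_1$, because $y>1$). Under this condition we obtain
\[
\frac{H(\eps y)}{H(\eps)}\;\leq\;(1+\delta)\,y^{-\rho+\eta},
\]
which is exactly the claim.

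The only point that needs a brief remark is the domain convention: Potter's bounds in \cite[Theorem 1.5.6]{regularVariation} are stated for regular variation at infinity, whereas here $H$ is regularly varying at $0$. This is handled by the standard change of variable $\widehat H(t):=H(1/t)$, which is regularly varying with index $\rho$ at infinity; applying Potter's bounds to $\widehat H$ and translating back gives the inequality above in its at-zero form. No step requires calculation beyond this; the lemma is essentially a verbatim restatement of Potter's theorem adapted to our regime, so I do not anticipate any serious obstacle.
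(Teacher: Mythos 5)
Your proof is correct and is exactly the argument the paper has in mind: the paper states this lemma without proof, noting only that it is "an application of Potter bounds to function $H$" via \cite[Theorem 1.5.6]{regularVariation}, which is precisely what you carry out, including the correct handling of the at-zero versus at-infinity convention and the identification of the binding constraint $\eps y<\eps_1$.
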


\begin{lemma}\label{lemma:prepareToBC}
	For any $\delta>0$ and $n \geq 1 $, there exist $y_\ast>1$ and $\tilde{\eps}>0$ such that
	\begin{align*}
	\P(X_n<\eps \mid X_0)\geq \1_{\left[0,\eps y_\ast^n\right)}(X_0) 
	\exp(-(1+\delta) \lambda^\ast H(\eps)),
	\end{align*}
	provided $\eps y_\ast^{n-1}< \tilde{\eps}$. 
\end{lemma}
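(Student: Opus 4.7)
The proof will combine four ingredients: the product lower bound from Lemma \ref{lemma:LowerBoundOnConditionalProbabilities}, the near-optimality of $\lambda^\ast$ from Lemma \ref{cor:lambda}, the lower estimate of $\P(\eps A y + B < \eps)$ from Lemma \ref{cor:g}, and the uniform control of $H$ across scales from the Potter bounds (Lemma \ref{cor:H} / \cite[Theorem 1.5.6]{regularVariation}). Given $\delta > 0$, the plan is to split it as $(1+\delta) \geq (1+\delta_1)(1+\delta_2)(1+\delta_3)(1+\delta_4)$ with each $\delta_i > 0$, and manage one factor of $1+\delta_i$ via each ingredient.

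First, apply Lemma \ref{cor:lambda} with tolerance $\delta_1$ to pick $y_\ast > 1$ with
\[
\frac{g(y_\ast)\, y_\ast^\rho}{y_\ast^\rho - 1} \;\leq\; (1+\delta_1)\, \lambda^\ast,
\]
then choose $\eta \in (0,\rho)$ small enough that $y_\ast^{\rho-\eta}/(y_\ast^{\rho-\eta}-1) \leq (1+\delta_2)\, y_\ast^\rho/(y_\ast^\rho - 1)$ (using continuity of $s\mapsto s/(s-1)$ at $s = y_\ast^\rho > 1$). Next, Lemma \ref{cor:g} (applied with $y = y_\ast$ and tolerance $\delta_3$) yields $\eps_g > 0$ so that for every $\eps' \in (0,\eps_g)$,
\[
\P(\eps' A y_\ast + B < \eps') \;\geq\; \exp\bigl(-(1+\delta_3)\, g(y_\ast)\, H(\eps')\bigr),
\]
and the Potter bounds give $\eps_H > 0$ such that $H(\eps' t)/H(\eps') \leq (1+\delta_4)\, t^{-(\rho-\eta)}$ for all $t \geq 1$ with $\eps',\eps' t \in (0,\eps_H)$. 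Set $\tilde\eps := \min(\eps_g,\eps_H)$.

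Assuming $\eps y_\ast^{n-1} < \tilde\eps$, every scale $\eps y_\ast^k$ with $0 \leq k \leq n-1$ lies in $(0,\tilde\eps)$, so both estimates apply simultaneously. Apply Lemma \ref{lemma:LowerBoundOnConditionalProbabilities} with $y = y_\ast$ and substitute the Lemma \ref{cor:g} lower bound at scale $\eps y_\ast^k$:
\[
\P(X_n<\eps \mid X_0) \;\geq\; \1_{[0,\eps y_\ast^n)}(X_0)\, \exp\!\left(-(1+\delta_3)\, g(y_\ast)\sum_{k=0}^{n-1} H(\eps y_\ast^k)\right).
\]
Then use the Potter bound $H(\eps y_\ast^k) \leq (1+\delta_4)\, y_\ast^{-k(\rho-\eta)} H(\eps)$ and sum the geometric series:
\[
\sum_{k=0}^{n-1} H(\eps y_\ast^k) \;\leq\; (1+\delta_4)\, H(\eps) \cdot \frac{y_\ast^{\rho-\eta}}{y_\ast^{\rho-\eta}-1}.
\]
Multiplying out the four tolerance factors and invoking the choices of $y_\ast$ and $\eta$ gives
\[
(1+\delta_3)(1+\delta_4)\, g(y_\ast)\, \frac{y_\ast^{\rho-\eta}}{y_\ast^{\rho-\eta}-1} \;\leq\; (1+\delta_1)(1+\delta_2)(1+\delta_3)(1+\delta_4)\, \lambda^\ast \;\leq\; (1+\delta)\, \lambda^\ast,
\]
which is exactly the claimed exponent.

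The only delicate point is the uniform-in-$k$ control of $H(\eps y_\ast^k)/H(\eps)$: a naive iteration of the single-$y$ form of Lemma \ref{cor:H} would accumulate a constant $(1+\delta_4)^k$ that grows with $n$. This is avoided by invoking the full Potter bounds \cite[Theorem 1.5.6]{regularVariation}, which produces a single multiplicative constant valid for all $t \geq 1$ in the relevant range of $\eps' t$; the condition $\eps y_\ast^{n-1} < \tilde\eps$ in the lemma's hypothesis is precisely what is needed for this uniform form to apply at every $k \leq n-1$.
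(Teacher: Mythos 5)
Your proof is correct and follows essentially the same route as the paper: Lemma \ref{lemma:LowerBoundOnConditionalProbabilities} to reduce to a product, Lemma \ref{cor:g} to bound each factor, Potter bounds to sum the geometric series uniformly, and Lemma \ref{cor:lambda} together with the choice of $\eta$ to collapse the resulting constant to $(1+\delta)\lambda^\ast$. The only difference is cosmetic — the paper uses a single tolerance $\alpha$ with $(1+\alpha)^4 = 1+\delta$ instead of four separate $\delta_i$, and your remark about the need for the uniform form of the Potter bounds is a fair observation on a point the paper glosses over in the phrasing of Lemma \ref{cor:H}.
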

\begin{proof}
Fix $\alpha >0$ and let $y_\ast>1$ be as in Lemma \ref{cor:lambda}. By Lemma \ref{cor:g} there exists $\eps_0>0$ such that
\[
\P(\eps A y_\ast+B<\eps)\geq 
\exp(-(1+\alpha) g(y_\ast)  H(\eps) )
\]	
for all $\eps\in(0,\eps_0)$. Thus, by Lemma \ref{lemma:LowerBoundOnConditionalProbabilities}, we obtain
\begin{align*}
\P(X_n<\eps \mid X_0)\geq \1_{\left[0,\eps y_\ast^n\right)}(X_0) 
\exp\left(-(1+\alpha) g(y_\ast) \sum_{l=0}^{n-1}   H(\eps y_\ast^l) 
\right),
\end{align*}
provided $\eps y_\ast^{n-1}< \eps_0$. By Lemma \ref{cor:H}, for $\eta\in(0,\rho)$,
\[
H(\eps y_\ast^k)\leq (1+\alpha) y_\ast^{-k(\rho-\eta)} H(\varepsilon),\qquad k=0,1,\ldots,n-1,
\]
as long as $\eps y_\ast^{n-1}<\eps_1$. Hence, if $\eps y_\ast^{n-1}<\tilde{\eps} :=\min\{\eps_0,\eps_1\}$, then
\begin{align}\label{m10.5}
\P(X_n<\eps \mid X_0)\geq \1_{\left[0,\eps y_\ast^n\right)}(X_0) 
\exp\left(-(1+\alpha)^2 g(y_\ast) \sum_{k=0}^{n-1}  y_\ast^{-k(\rho-\eta)}  H(\eps) \right).
\end{align}
By Lemma \ref{cor:lambda}, for sufficiently small $\eta >0$,
\begin{align*}
g(y_\ast) \sum_{k=0}^{n-1}  y_\ast^{-k(\rho-\eta)} &= g(y_\ast) \frac{y_\ast^{\rho-\eta}}{y_\ast^{\rho-\eta}-1}\left(1-y_\ast^{-n(\rho-\eta)}\right) \leq (1+\alpha)  g(y_\ast) \frac{y_\ast^{\rho}}{y_\ast^{\rho}-1} \\
&\leq (1+\alpha)^2 \lambda^\ast.
\end{align*}
This and \eqref{m10.5} show that
\begin{align*}
\P(X_n<\eps \mid X_0)\geq \1_{\left[0,\eps y_\ast^n\right)}(X_0) 
\exp\left(-(1+\alpha)^4 \lambda^\ast H(\eps) \right).
\end{align*}
The lemma follows if we take $(1+\alpha)^4 = 1+\delta$.
\end{proof}

We will need the following version of the Borel-Cantelli Lemma.
\begin{lemma}\label{BorelCantelliLemmaConditional}\begin{enumerate}[(a)]
		\item Suppose that $(\F_n)$ is a filtration such that $\F_0=\{\emptyset,\Omega\}$, and  $A_n\in \F_{n}$ for $n\geq 0$. Then 
		$$\{A_n\ \ i.o.\}=\left\{\sum_{n=1}^{\infty}\P(A_n\mid \F_{n-1})=\infty\right\}.$$
		\item Suppose that  $(X_n)$ is a Markov process with respect to a filtration $(\F_n)$ such that $\F_0=\{\emptyset,\Omega\}$, and $A_n\in\sigma(X_n)$ for $n\geq 1$.
		Then 
		$$\{A_n\ \ i.o.\}=\left\{\sum_{n=1}^{\infty}\P(A_n\mid X_{n-1})=\infty\right\}.$$
	\end{enumerate}
\end{lemma}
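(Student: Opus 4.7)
The plan is to prove the classical Lévy (conditional) Borel--Cantelli lemma, which states that the set where $A_n$ happens infinitely often coincides a.s. with the set where the conditional expectation sum diverges. Set $\xi_n = \1_{A_n} - \P(A_n \mid \F_{n-1})$, $S_n = \sum_{k=1}^n \1_{A_k}$, $T_n = \sum_{k=1}^n \P(A_k \mid \F_{k-1})$, and $M_n = S_n - T_n = \sum_{k=1}^n \xi_k$. Then $(M_n)$ is a martingale with bounded increments ($|\xi_n| \leq 1$), and since $\{A_n \text{ i.o.}\} = \{S_\infty = \infty\}$, it suffices to show $\{S_\infty = \infty\} = \{T_\infty = \infty\}$ a.s.

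For the easy inclusion ($T_\infty < \infty$ implies $S_\infty < \infty$), I would fix $c > 0$ and localize by the \emph{predictable} stopping time $\tau_c = \inf\{n : T_{n+1} > c\}$ (predictable because $T_{n+1}$ is $\F_n$-measurable). The stopped process $T_{n\wedge\tau_c}$ is bounded by $c+1$, so the stopped martingale $M^{\tau_c}$ has predictable quadratic variation $\sum_{k\leq n\wedge\tau_c}\mathrm{Var}(\xi_k \mid \F_{k-1}) \leq T_{n\wedge\tau_c}$ uniformly bounded in $L^1$. Hence $M^{\tau_c}_n$ converges a.s. On $\{T_\infty \leq c\}$ we have $\tau_c = \infty$, so $M_n$ converges and thus $S_n = M_n + T_n$ converges; letting $c \uparrow \infty$ covers $\{T_\infty < \infty\}$.

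For the harder inclusion ($T_\infty = \infty$ implies $S_\infty = \infty$), I will construct a second martingale with summable predictable variation and apply Kronecker's lemma. Define $N_n = \sum_{k=1}^n \xi_k/(1+T_k)$; this is again a martingale (each factor $(1+T_k)^{-1}$ is $\F_{k-1}$-measurable), and its pathwise predictable quadratic variation satisfies
\[
\sum_{k=1}^\infty \frac{\mathrm{Var}(\xi_k \mid \F_{k-1})}{(1+T_k)^2} \leq \sum_{k=1}^\infty \frac{T_k - T_{k-1}}{(1+T_k)^2} \leq \int_0^\infty \frac{dt}{(1+t)^2} = 1,
\]
where I use monotonicity of $t \mapsto (1+t)^{-2}$ to bound each term by $\int_{T_{k-1}}^{T_k}(1+t)^{-2}\,dt$. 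Thus $N_n$ converges a.s., and Kronecker's lemma yields $M_n/(1+T_n) \to 0$ a.s. on $\{T_\infty = \infty\}$, giving $S_n/T_n \to 1$ and hence $S_\infty = \infty$ on that event. This completes part (a).

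For part (b), the assumption $A_n \in \sigma(X_n)$ lets me write $\1_{A_n} = f_n(X_n)$ for some Borel $f_n$, and the Markov property of $(X_n)$ gives
\[
\P(A_n \mid \F_{n-1}) = \E[f_n(X_n) \mid \F_{n-1}] = \E[f_n(X_n) \mid X_{n-1}] = \P(A_n \mid X_{n-1})
\]
a.s., so the two partial sums in (a) and (b) agree a.s., and (b) follows immediately from (a). The main obstacle is the hard direction of (a): one must build an auxiliary martingale whose predictable variation is summable even though the original $M_n$ may itself diverge, and carrying out the pathwise telescoping comparison $\sum(T_k - T_{k-1})/(1+T_k)^2 \leq \int_0^\infty(1+t)^{-2}dt$ cleanly requires the monotonicity argument above.
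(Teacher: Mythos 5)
Your proof is correct. The paper itself gives no argument for part (a) -- it simply cites [Chandra, Thm.\ 5.1.2] for the L\'evy conditional Borel--Cantelli lemma and remarks that (b) is an easy corollary -- and what you have written is precisely the standard martingale proof of that cited result: the localization by the predictable time $\tau_c$ for the direction $\{T_\infty<\infty\}\subset\{S_\infty<\infty\}$, and the weighted martingale $N_n=\sum_k \xi_k/(1+T_k)$ with summable predictable variation plus Kronecker's lemma for the converse. Your reduction of (b) to (a) via $\E[f_n(X_n)\mid\F_{n-1}]=\E[f_n(X_n)\mid X_{n-1}]$ is exactly the ``easy corollary'' the authors have in mind, so your write-up is a complete and correct substitute for the citation.
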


\begin{proof}
For (a), see \cite[Thm. 5.1.2]{Chandra}. Part (b) is an easy corollary of  (a).
\end{proof}

We state the following  well-known Kronecker's lemma without proof.
\begin{lemma}\label{j15.3}
 If $a_n\uparrow \infty$ and 
$\sum_{n=1}^{\infty} x_n/a_n$
converges then 
$\lim_{n\to\infty}\frac{1}{a_n}\sum_{m=1}^{n} x_m= 0$.
\end{lemma}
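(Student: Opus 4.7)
The plan is to prove this via Abel summation (discrete integration by parts) combined with a Cesàro-type averaging argument. Set $s_0=0$ and $s_n=\sum_{k=1}^{n} x_k/a_k$ for $n\geq 1$. By hypothesis, $s_n$ converges to some finite $s$ as $n\to\infty$. The key algebraic identity is $x_m = a_m(s_m-s_{m-1})$, which lets me rewrite the partial sum in a form that exposes $a_n$ as a natural normalizing factor.

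First I would apply Abel summation to obtain
\[
\sum_{m=1}^{n} x_m \;=\; \sum_{m=1}^{n} a_m(s_m-s_{m-1}) \;=\; a_n s_n \;-\; \sum_{m=1}^{n-1}(a_{m+1}-a_m)\,s_m,
\]
using $s_0=0$ to eliminate the boundary term at $m=1$. Dividing through by $a_n$ gives
\[
\frac{1}{a_n}\sum_{m=1}^{n} x_m \;=\; s_n \;-\; \frac{1}{a_n}\sum_{m=1}^{n-1}(a_{m+1}-a_m)\,s_m.
\]
Since $s_n\to s$, it suffices to show that the weighted average in the second term also tends to $s$; the two will then cancel and produce the claimed limit $0$.

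The hard part, though it is routine, is handling this weighted average. The weights $w_{n,m}:=(a_{m+1}-a_m)/a_n$ for $m=1,\ldots,n-1$ are nonnegative (since $a_n$ is increasing) and telescope to $(a_n-a_1)/a_n \to 1$ as $n\to\infty$. Given $\eta>0$, I would choose $N$ so that $|s_m-s|<\eta$ for all $m\geq N$, then split
\[
\frac{1}{a_n}\sum_{m=1}^{n-1}(a_{m+1}-a_m)\,s_m \;=\; \frac{1}{a_n}\sum_{m=1}^{N-1}(a_{m+1}-a_m)\,s_m \;+\; \frac{1}{a_n}\sum_{m=N}^{n-1}(a_{m+1}-a_m)\,s_m.
\]
The first piece is a fixed finite sum divided by $a_n\to\infty$, so it vanishes. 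For the second piece I would write $s_m = s + (s_m-s)$, so it equals $s\cdot(a_n-a_N)/a_n$ plus an error bounded by $\eta\cdot(a_n-a_N)/a_n \leq \eta$. Letting $n\to\infty$ and then $\eta\downarrow 0$ yields the weighted average tending to $s$, completing the proof.

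The only subtlety worth flagging is the convention on $a_n\uparrow\infty$: I read this as $a_n$ being nondecreasing and diverging, so that $a_{m+1}-a_m\geq 0$ and the weights are genuinely a sub-probability measure on $\{1,\ldots,n-1\}$. Without monotonicity one would need to impose that $(a_n)$ has bounded variation relative to $a_n$, but under the stated hypothesis this issue does not arise.
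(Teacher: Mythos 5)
Your proof is correct: the Abel summation identity is computed accurately, and the Toeplitz-style splitting of the weighted average at a large index $N$ is the standard way to conclude. The paper explicitly states this Kronecker lemma as well known and gives no proof, so there is nothing to compare against; your argument is the classical one and fills that gap cleanly.
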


We will need the following result on the ergodicity for subsequences of the iterated stochastic sequence.

\begin{lemma}\label{prop:subsequncesErgodicity}
Suppose that $X$ is a solution to \eqref{stochasticFixedPointEquation}.
 For any bounded uniformly continuous functions $f$ on $\R$
and any increasing integer sequence $(n_k)$, a.s.,
\begin{equation}\label{eq:ergodicSubsequence}
 L(f):=\limsup_{m\to \infty}\frac{1}{m}\sum_{k=1}^{m}f(X_{n_k})\geq \E[f(X)]\geq l(f):=\liminf_{m\to \infty}\frac{1}{m}\sum_{k=1}^{m}f(X_{n_k}).
\end{equation}
Moreover, $L(f)$ and $l(f)$ are constants a.s.
\end{lemma}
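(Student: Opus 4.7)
The plan is to prove the lemma in two separate steps: first establish that $L(f)$ and $l(f)$ are a.s.\ constant, then use a soft Ces\`aro/Fatou argument to sandwich these constants around $\E[f(X)]$.

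For the constancy, the strategy is to exhibit $L(f)$ as asymptotically depending only on the ``future'' of the i.i.d.\ sequence $((A_i,B_i))_{i\geq1}$, so that Kolmogorov's zero-one law applies. For each $N\geq1$, introduce the chain re-initialised at the origin at time $N$ and driven by the same innovations: $X_N^{(N)}=0$ and $X_n^{(N)}=A_n X_{n-1}^{(N)}+B_n$ for $n>N$. Unwinding the recursion gives
\[
X_n - X_n^{(N)} \;=\; X_N\prod_{i=N+1}^n A_i,\qquad n>N.
\]
The running hypothesis $\E[\log A]<0$ together with the strong law of large numbers forces $\prod_{i=N+1}^n A_i\to 0$ a.s., and since $X_N$ is a.s.\ finite, we conclude $|X_n-X_n^{(N)}|\to 0$ a.s. Uniform continuity of $f$ promotes this to $|f(X_n)-f(X_n^{(N)})|\to 0$ a.s., and a Ces\`aro average of a null sequence is null; consequently $L(f)=L^{(N)}(f)$ and $l(f)=l^{(N)}(f)$ a.s., where $L^{(N)},l^{(N)}$ are built from $X_{n_k}^{(N)}$ in the obvious way (the finitely many indices with $n_k\leq N$ do not affect the limsup/liminf). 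Since $L^{(N)}(f)\in\sigma((A_i,B_i)_{i>N})$, we obtain $L(f)=\E[L(f)\mid\sigma((A_i,B_i)_{i>N})]$ a.s.; backward martingale convergence together with Kolmogorov's zero-one law applied to the tail $\sigma$-algebra $\bigcap_N\sigma((A_i,B_i)_{i>N})$ then forces $L(f)=\E[L(f)]$ a.s., and symmetrically for $l(f)$.

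For the sandwich, note that $|f|\leq M:=\|f\|_\infty<\infty$, so the random variables $m^{-1}\sum_{k=1}^m f(X_{n_k})$ are uniformly bounded and the reverse Fatou lemma yields
\[
\E[L(f)] \;\geq\; \limsup_{m\to\infty}\,\frac{1}{m}\sum_{k=1}^m \E[f(X_{n_k})].
\]
The standing assumptions $\E[\log A]<0$ and $\E[\log^+B]<\infty$ combined with Corollary \ref{thm:uniquenessAndConvergence} give $X_n\stackrel{d}{\longrightarrow} X$; bounded continuity of $f$ then gives $\E[f(X_{n_k})]\to\E[f(X)]$, and a routine Ces\`aro step turns the limsup on the right into $\E[f(X)]$. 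Combined with the a.s.\ constancy above, this forces $L(f)\geq \E[f(X)]$ a.s. A symmetric argument using the ordinary Fatou lemma gives $l(f)\leq \E[f(X)]$ a.s.

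The main obstacle is the coupling/constancy step: one needs to verify that the ``forgotten'' initial segment of the trajectory has vanishing asymptotic influence on $f(X_n)$, which is precisely where the combination of $\E[\log A]<0$ (contracting the products $\prod A_i$) and the uniform continuity of $f$ is essential. Everything after that is routine Ces\`aro averaging together with distributional convergence.
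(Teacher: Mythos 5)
Your proof is correct and follows essentially the same route as the paper's: the coupling argument replacing $X_n$ by the chain re-initialised at zero at time $N$, the observation that uniform continuity plus $\prod A_i \to 0$ makes the difference Ces\`aro-null, measurability with respect to $\sigma((A_i,B_i)_{i>N})$ combined with the Kolmogorov zero-one law for a.s.\ constancy, and then a Fatou-type bound together with $X_n \stackrel{d}{\to} X$ to sandwich the constants around $\E[f(X)]$. The only cosmetic differences are that the paper deduces constancy directly from tail-measurability without the detour through backward martingale convergence, and that it proves the liminf inequality first and obtains the limsup side by substituting $-f$ rather than invoking reverse Fatou.
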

\begin{proof}
 For $r \geq 1 $, we define 
$$X_n^r:=\left\{\begin{array}{cl}
                 0,& n\leq r;\\
		 A_nX^r_{n-1}+B_n& n>r.
                \end{array}
\right.$$
We have assumed that $\E[\log A] <0$ so $\lim_{n\to\infty}\prod_{j=r+1}^{n}A_j = 0$, a.s. Therefore, when $n\to \infty$, a.s.,
$$X_{n}-X^{r}_n= \left(\prod_{j=r+1}^{n}A_j\right) X_{r} \to 0.$$
Hence $\lim_{n\to \infty} f(X_n)-f(X_n^{r})=0$, a.s., and it follows that, a.s.,
$$\lim_{m\to \infty}\frac{1}{m}\sum_{k=1}^{m}f(X_{n_k})-f(X_{n_k}^r)=0.$$
This implies that, a.s.,
\begin{align}\label{m5.1}
\limsup_{m\to \infty}\frac{1}{m}\sum_{k=1}^{m}f(X_{n_k})
&=\limsup_{m\to \infty}\frac{1}{m}\sum_{k=1}^{m} f(X_{n_k}^r),\\
\liminf_{m\to \infty}\frac{1}{m}\sum_{k=1}^{m}f(X_{n_k})
&=\liminf_{m\to \infty}\frac{1}{m}\sum_{k=1}^{m} f(X_{n_k}^r).
\label{m5.2}
\end{align}
For every fixed $r>0$,
the random variables on the right hand sides of \eqref{m5.1} and \eqref{m5.2} are measurable with respect to the $\sigma$-field $\G_r:= \sigma((A_n,B_n): n\geq r)$. Thus the same applies to the random variables on the left hand sides of \eqref{m5.1} and \eqref{m5.2}.
Hence, these random variables are measurable with respect to the $\sigma$-field $\G_{\infty}:=\bigcap_{r=1}^{\infty} \F_r$. By the Kolomogorov 0-1 law, random variables on both sides of \eqref{m5.1} and \eqref{m5.2} are constant, a.s. 

By Corollary \ref{thm:uniquenessAndConvergence} (i), $X_n \to X$ in distribution. This implies that $\lim_{n\to \infty}\E[f(X_n)]=\E[f(X)]$. We combine this observation with Fatou's Lemma ($f$ need not be nonnegative, but it is bounded) to obtain,
\begin{align*}
\liminf_{m\to \infty}\frac{1}{m}\sum_{k=1}^{m} f(X_{n_k})
&=
\E\left[
\liminf_{m\to \infty}\frac{1}{m}\sum_{k=1}^{m} f(X_{n_k})
\right] \leq 
\lim_{m\to\infty}
\E\left[
\frac{1}{m}\sum_{k=1}^{m}f(X_{n_k})\right] \\
&= \E[f(X)].
\end{align*}
This proves the inequality on the right hand side of \eqref{eq:ergodicSubsequence}. The inequality on the left hand side follows by applying the claim to $-f$ in place of $f$.
\end{proof}

\begin{lemma}
\label{lemma:hardSideOfBorelCantelli}\ 

(i) For every $\eps>0$,
$$\left\{X_n\leq H^{-1}\left(\frac{\log n}{\lambda^\ast(1+\eps)}\right)\right\}$$
happens infinitely often almost surely.

(ii)  Almost surely,
$$\liminf_{n\to\infty}\frac{X_n}{H^{-1}(\log n)}  \leq (\lambda^\ast)^{1/\rho}.$$
\end{lemma}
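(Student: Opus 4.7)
The plan is to prove (i) by invoking the conditional Borel--Cantelli lemma (Lemma \ref{BorelCantelliLemmaConditional}(a)) along a carefully chosen subsequence $(M_k)$, and then to deduce (ii) by using that $H^{-1}$ is regularly varying with index $-1/\rho$ at infinity. Fix $\eps>0$, pick $\delta\in(0,\eps)$ so that $\gamma:=(1+\delta)/(1+\eps)<1$, and let $y_\ast>1$ be the constant from Lemma \ref{cor:lambda} associated to this $\delta$. Set $\eps_k := H^{-1}\!\bigl(\log M_k/(\lambda^\ast(1+\eps))\bigr)$.

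I would choose the block lengths $N_k := M_k-M_{k-1}$ so that, for some fixed small $c>0$,
\[
\eps_k y_\ast^{N_k-1} < \tilde\eps \qquad\text{and}\qquad \eps_k y_\ast^{N_k}\geq c,
\]
where $\tilde\eps$ is the threshold from Lemma \ref{lemma:prepareToBC}. The two conditions differ by a bounded additive constant in $N_k$ (provided $c<\tilde\eps/y_\ast$), so an integer $N_k$ exists; regular variation of $H^{-1}$ forces $N_k \asymp \log\log M_k$, hence $M_k\asymp k\log\log k$. Let $\F_k := \sigma(X_0,\ldots,X_{M_k})$ and $A_k := \{X_{M_k}<\eps_k\}$. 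By the Markov property and Lemma \ref{lemma:prepareToBC}, applied with $X_{M_{k-1}}$ in place of $X_0$,
\[
\P(A_k\mid \F_{k-1}) \geq \1_{[0,c)}(X_{M_{k-1}})\exp\!\bigl(-(1+\delta)\lambda^\ast H(\eps_k)\bigr) = \1_{[0,c)}(X_{M_{k-1}})\,M_k^{-\gamma}.
\]
By Theorem \ref{thm:solutionOfSFPE}, $X$ is \IED\ with finite parameter $\lambda^\ast$, so $\P(X<c)>0$, and I would pick a bounded uniformly continuous $f$ with $0\leq f\leq \1_{[0,c)}$ and $\mu := \E[f(X)]>0$. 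Lemma \ref{prop:subsequncesErgodicity} applied to $(X_{M_{k-1}})$ gives $\limsup_m m^{-1}\sum_{k=1}^m f(X_{M_{k-1}})\geq \mu$ a.s., so for infinitely many $m$ (a.s.) at least $\mu m/2$ of the indices $k\leq m$ satisfy $X_{M_{k-1}}<c$. For such $m$,
\[
\sum_{k=1}^m \P(A_k\mid \F_{k-1}) \geq \tfrac{\mu m}{2}\,M_m^{-\gamma} \gtrsim \frac{m^{1-\gamma}}{(\log\log m)^{\gamma}} \to \infty,
\]
since $\gamma<1$. Thus $\sum_k \P(A_k\mid \F_{k-1})=\infty$ a.s., and Lemma \ref{BorelCantelliLemmaConditional}(a) yields that $A_k$ occurs infinitely often a.s.; this gives (i) along the subsequence $n=M_k$.

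For (ii), on the full-measure event that $A_k$ holds for an infinite subsequence $(k_j)$, the regular variation of $H^{-1}$ gives
\[
\frac{X_{M_{k_j}}}{H^{-1}(\log M_{k_j})} \leq \frac{\eps_{k_j}}{H^{-1}(\log M_{k_j})} \to (\lambda^\ast(1+\eps))^{1/\rho},
\]
so $\liminf_n X_n/H^{-1}(\log n)\leq (\lambda^\ast(1+\eps))^{1/\rho}$ a.s., and letting $\eps\downarrow 0$ completes the proof. The chief obstacle is the tight calibration of $N_k$: the admissibility constraint $\eps_k y_\ast^{N_k-1}<\tilde\eps$ of Lemma \ref{lemma:prepareToBC} forces $N_k$ to be only logarithmic in $M_k$, so $M_k$ cannot be much larger than $k$; the resulting divergence of $\sum_k M_k^{-\gamma}$ along the density-positive indices is what would fail if one had $\gamma=1$ instead of $\gamma<1$.
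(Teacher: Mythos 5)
Your proof is essentially correct and follows the same architecture as the paper's: use Lemma \ref{lemma:prepareToBC} together with the Markov property to get a lower bound $\P(A_k\mid\F_{k-1})\geq\1_{[0,c)}(X_{M_{k-1}})M_k^{-\gamma}$ along a subsequence $(M_k)$ tuned so that the admissibility constraint of Lemma \ref{lemma:prepareToBC} holds, use the ergodic Lemma \ref{prop:subsequncesErgodicity} to show that $[0,c)$ is visited with positive frequency along $(M_k)$, and feed this into the conditional Borel--Cantelli Lemma \ref{BorelCantelliLemmaConditional}.

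The one genuine difference is how you convert the ergodic statement into divergence of the conditional series. The paper's route is via Kronecker's lemma (Lemma \ref{j15.3}): it shows $\limsup_m k_m^{-\gamma}\sum_{n\leq m}\1_{[0,c)}(X_{k_n})>0$ and deduces $\sum_n\1_{[0,c)}(X_{k_n})k_n^{-\gamma}=\infty$, which only requires the one-sided bound $k_n^\gamma\leq Kn$ established in the appendix (Lemma \ref{propo:main:kn}). Your route instead counts: along a subsequence of $m$'s, a fraction $\geq\mu/2$ of the indices $k\leq m$ are good, and you bound the $m$-th partial sum below by $\tfrac{\mu m}{2}M_m^{-\gamma}$. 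For this to diverge you need $M_m^\gamma=o(m)$, which is \emph{strictly stronger} than the $M_m^\gamma=O(m)$ that Lemma \ref{propo:main:kn} states. Your assertion ``$M_k\asymp k\log\log k$'' is heuristically right and would do it, but it is not proved in the paper and you offer no argument for it. The clean fix is to note that Lemma \ref{propo:main:kn} holds for \emph{every} exponent in $(0,1)$, so applying it with some $\gamma'\in(\gamma,1)$ yields $M_m\leq(K'm)^{1/\gamma'}$ and hence $M_m^\gamma=O(m^{\gamma/\gamma'})=o(m)$; with that patch your counting argument goes through. So: same skeleton, slightly different final mechanism, and a small gap in the growth estimate on $(M_k)$ that needs either the sharper asymptotics (unproved here) or the $\gamma'$-trick. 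A minor nit: compatibility of the two constraints on $N_k$ only needs $c<\tilde\eps$ (indeed the paper allows $c\in(0,\tilde\eps y_\ast)$), so your stated restriction $c<\tilde\eps/y_\ast$ is unnecessarily strict, though harmless.
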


\begin{proof}
Fix any $\eps >0$.
Let $(k_n)_n$ be a strictly increasing sequence of integers. Since $(X_{k_{n+1}-k_n} \mid X_0)\stackrel{d}{=}(X_{k_{n+1}} \mid X_{k_n})$ for any $\delta>0$ and $n\geq 1$, by Lemma \ref{lemma:prepareToBC} there exist $y_*>1$ and $\tilde{\eps} >0$ such that,  a.s., for $t>0$,
\begin{align}\label{eq:firstI}
\P(X_{k_{n+1}}<t \mid X_{k_n})\geq \1_{\left[0,t\, y_\ast^{k_{n+1}-k_n}\right)}(X_{k_n})  e^{-(1+\delta) \lambda^\ast H(t)}
\end{align}
provided 
\[
t\, y_\ast^{k_{n+1}-k_n-1}<\tilde{\eps}.
\] 
By Lemma \ref{propo:main:kn} we can choose the sequence $(k_n)$, so it satisfies for each $n \geq 1 $,
\begin{align*}
\begin{split}
H^{-1}\left( \frac{\log k_{n+1}}{\lambda^\ast(1+\eps)} \right)y_\ast^{k_{n+1}-k_n-1}<\tilde{\eps}, \\
H^{-1}\left( \frac{\log k_{n+1}}{\lambda^\ast(1+\eps)} \right)y_\ast^{k_{n+1}-k_n} \geq c,
\end{split}
\end{align*}
where $c\in(0,\tilde{\eps} y_\ast)$.
Then, taking $t = H^{-1}\left( \frac{\log k_{n+1}}{\lambda^\ast(1+\eps)} \right)$ in \eqref{eq:firstI}, we have, a.s.,
\begin{align}\label{eq:SecondI}
\P\left(X_{k_{n+1}}<H^{-1}\left( \frac{\log k_{n+1}}{\lambda^\ast(1+\eps)} \right) \mid X_{k_n}\right)\geq \1_{[0,c)}(X_{k_n}) \frac{1}{k_n^\gamma},
\end{align}
where $\gamma=\frac{1+\delta}{1+\eps}$. Take $\delta <\eps$ so that $\gamma<1$.
By Lemma \ref{propo:main:kn}, there  exists $K>0$ such that $k_n^{\gamma}\leq K(n+1)$ for all $n$.

We have, a.s.,
\begin{align}\notag
 \limsup_{m\to\infty}\frac{1}{k_m^\gamma}\sum_{n=0}^{m}\1_{[0,c)}(X_{k_n})
&\geq
\limsup_{m\to\infty}\frac{K^{-1}}{m+1}\sum_{n=0}^{m}\1_{[0,c)}(X_{k_n})
\geq
\limsup_{m\to\infty}\frac{K^{-1}}{m+1}\sum_{n=0}^{m}f_c(X_{k_n}) \\
\label{eq:applyingSubsequenceErgodicity}
&\geq \E[f_c(X)]/K\geq  \P(X <c/2)/K>0,
\end{align}
where the first inequality on the second line of \eqref{eq:applyingSubsequenceErgodicity} follows from Lemma \ref{prop:subsequncesErgodicity} applied to  the function
\begin{align*}
f_c(x) =
\begin{cases}
1, &x<c/2;\\
		2(c-x)/c, &x\in [c/2,c];\\
		0, &x>c.
\end{cases}
\end{align*}
The last inequality in \eqref{eq:applyingSubsequenceErgodicity} follows from Theorem \ref{thm:solutionOfSFPE} because we assumed that $\lambda^\ast\in(0,\infty)$ in  Theorem \ref{thm:LowerEnvelope}.

Kronecker's lemma (Lemma \ref{j15.3}) and \eqref{eq:applyingSubsequenceErgodicity} imply that 
$$\sum_{n=0}^{\infty}\1_{[0,c)}(X_{k_n}) \frac{1}{k_n^\gamma}=\infty, \ \text{a.s.}$$
Hence, in view of \eqref{eq:SecondI}, a.s.,
$$\sum_{n=1}^{\infty}\P\left(X_{k_{n+1}}<H^{-1}\left( \frac{\log k_{n+1}}{\lambda^\ast(1+\eps)} \right) \mid X_{k_n}\right)= \infty.$$
This and Lemma \ref{BorelCantelliLemmaConditional} (b) imply part (i) of the present lemma.

Recall that $H^{-1}$ is a regularly varying function at $\infty$ with index $-1/\rho$ to see that
part  (ii) of the lemma follows from part (i).
\end{proof}
\begin{proof}[Proof of Theorem \ref{thm:LowerEnvelope}]
The theorem follows from Lemmas \ref{lemma:easySideOfBorelCantelli2} and  \ref{lemma:hardSideOfBorelCantelli}  
\end{proof}

\section{Application to Fleming-Viot type process}\label{sec:FlemingViot}

This section is devoted to the proof of Theorem \ref{thm:LawOfTheIteratedLogartihm}, a version of the Law of Iterated Logarithm for a Fleming-Viot type process. This result was the primary motivation for introducing and analyzing the ``local dependence measure.''

Fleming-Viot type processes were originally defined in \cite{BHM}. The specific model discussed below is close to those in \cite{extinctionOfFlemingViot}. Under mild assumptions, it was proved in \cite{BB18} that the Fleming-Viot process has a unique spine, i.e., a trajectory inside the branching tree that never hits the boundary of the domain where the process is confined. It was proved in \cite{BB18}, for a Fleming-Viot process on a finite state space, that the distribution of the spine converges to the distribution of the driving process conditioned to never exit the domain, when the number of individuals in the population grows to infinity. We do not know whether a similar result holds for the spine in the specific model discussed below, with the population size fixed and equal to two. The LIL proved in Theorem \ref{thm:LawOfTheIteratedLogartihm} is the first step in our program to analyze this particular spine in detail.

We will now define a Fleming-Viot process and  other elements of the model.
Informally, the process consists of two independent Brownian particles starting at the same point in $(0,\infty)$. At the time when one of them hits 0, it is killed and the other one branches into two particles. The new particles start moving as independent Brownian motions and the scheme is repeated.

On the formal side, let
$(W_1(t):t\geq 0)$ and $(W_2(t):t\geq 0)$ be two independent Brownian motions starting from $W_1(0) =W_2(0)=1$. Let
\begin{align*}
T_0&=0,\\
Y_0&=1,\\
\tau_j &= \inf\{t\geq 0: W_j(t) =0\}, \qquad j=1,2,\\
T_1&=\min(\tau_1,\tau_2),\\
Y_1&=\max(W_1(T_1),W_2(T_1)),
\end{align*}
and for $k\geq 2$,
\begin{align*}
T_{k}&=\inf\{t>T_{k-1} : \min(W_1(t)-W_1(T_{k-1})+Y_{k-1},W_2(t)-W_2(T_{k-1})+Y_{k-1}) =0 \},\\
Y_k&=\max(W_1(T_k)-W_1(T_{k-1})+Y_{k-1},W_2(T_k)-W_2(T_{k-1})+Y_{k-1}).
\end{align*}
It follows from the proof of Theorem 1.4 in \cite{BHM} that $T_k\to \infty$, a.s. Hence, for any $t\geq 0$ we can find $j$ such that $t\in [T_{j-1},T_{j})$. Then we set
\begin{align}\label{j18.1}
\cY(t)=
(Y_1(t),Y_2(t))&=(W_1(t)-W_1(T_{j-1})+Y_{j-1},W_2(t)-W_2(T_{j-1})+Y_{j-1}).
\end{align}
This completes the definition of $\{\cY(t), t\geq 0\}$, an example of a Fleming-Viot process. Let $Z(t) = \max(Y_1(t),Y_2(t))$ be the spine and note that $Z(T_k) = Y_k$ for all $k$. 

The following is the main result of this section.
\begin{thm}\label{thm:LawOfTheIteratedLogartihm}
Almost surely, 
\begin{align}\label{j16.1}
\limsup_{n\to\infty}\frac{Y_n}{\sqrt{2 T_n\log \log T_n}}=1.
\end{align}
\end{thm}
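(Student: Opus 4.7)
The plan is to reduce the LIL to an application of Theorem~\ref{thm:LowerEnvelope}. By Brownian scaling, set $R_k := Y_k/Y_{k-1}$ and $S_k := (T_k-T_{k-1})/Y_{k-1}^2$; then $(R_k, S_k)_{k\geq 1}$ is an i.i.d.\ sequence with the common law of a pair $(R, S)$ built from two independent Brownian motions $B_1, B_2$ started at $1$: $S = \min(\tau_1, \tau_2)$ with $\tau_j := \inf\{t: B_j(t) = 0\}$, and $R$ is the position of the surviving Brownian motion at time $S$. Writing $U_n := T_n/Y_n^2$, the recursion becomes
\[
U_n = \frac{U_{n-1}+S_n}{R_n^2} = A_n U_{n-1} + B_n, \qquad A_n := R_n^{-2},\ B_n := S_n/R_n^2,
\]
which is an iterated stochastic sequence of the form studied in the paper. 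Since $Y_n/\sqrt{2T_n\log\log T_n} = (2U_n\log\log T_n)^{-1/2}$, the claim \eqref{j16.1} is equivalent to $\liminf_n U_n \log\log T_n = 1/2$ a.s.

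The heart of the proof is identifying the $(\rho, H)$-\ldm\ $g$ of $(A,B)$ for $\rho=1$ and $H(\eps)=1/\eps$. Note that $\{\eps Ay+B<\eps\} = \{S<\eps(R^2-y)\}$. Using the exchangeability of $B_1, B_2$,
\[
\P\bigl(S<\eps(R^2-y)\bigr) = 2\int_0^\infty f_{\tau_1}(s)\,\P\bigl(\tau_2>s,\ B_2(s)^2 > y+s/\eps\bigr)\,ds.
\]
Substituting $s=\eps u$, using the small-time density $f_{\tau_1}(\eps u)\sim (2\pi(\eps u)^3)^{-1/2}\exp(-1/(2\eps u))$ together with the Gaussian tail bound $\P(B_2(\eps u)>\sqrt{y+u},\ \tau_2>\eps u) \sim \text{const}\cdot\exp(-(\sqrt{y+u}-1)^2/(2\eps u))$ for $y+u > 1$, Laplace's method identifies the leading exponential rate as $\inf_{u>0}[1+(\sqrt{y+u}-1)^2]/(2u)$. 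A direct minimization in $u$ yields
\[
g(y) = \frac{1}{2+\sqrt{y^2+4}-y},\qquad y \geq 0,
\]
so that $g(0)=1/4>0$ and $g(y)\nearrow 1/2$ as $y\to\infty$. An elementary computation shows $y\,g(y)/(y-1) \geq 1/2$ for all $y>1$ (reducing to the trivial inequality $y-2\leq\sqrt{y^2+4}$), with the lower bound approached only in the limit $y\to\infty$; hence $\lambda^\ast = 1/2$.

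The remaining hypotheses of Theorem~\ref{thm:LowerEnvelope} are verified as follows: $\E[\log A] = -2\E[\log R] < 0$, i.e.\ $\E[\log R] > 0$, is the known ``transience'' of the spine for the two-particle Fleming--Viot model; and $\E[\log^+ B] = \E[(\log S - 2\log R)^+] <\infty$ follows from the $\sim y^{-2}$ tail of $S$ at infinity and the conditional density of $R$ given $S$, which vanishes linearly at $0$. Theorem~\ref{thm:LowerEnvelope} then yields $\liminf_n U_n \log n = \lambda^\ast = 1/2$ a.s. Finally, the SLLN applied to $\log Y_n=\sum_k\log R_k$ gives $\log Y_n/n\to\mu:=\E[\log R]>0$, and combined with $T_n\geq Y_{n-1}^2 S_n$ and $\E|\log S|<\infty$ we obtain $\log T_n/n\to 2\mu$ a.s., so $\log\log T_n/\log n\to 1$ a.s. Therefore $\liminf_n U_n \log\log T_n = 1/2$ a.s., which is precisely \eqref{j16.1}. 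The principal technical difficulty is the Laplace-method analysis in the second paragraph, specifically handling the transition regime $y+u \approx 1$ where $\P(B_2(\eps u)^2>y+u)$ smoothly crosses from $\approx 1$ to $\approx 0$ and both regions must be matched to obtain a two-sided exponential bound.
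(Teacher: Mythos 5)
Your proposal follows the same overall strategy as the paper: set $A_n=\Theta_{n-1}^{-2}$, $B_n=\Lambda_{n-1}\Theta_{n-1}^{-2}$, apply Theorem \ref{thm:LowerEnvelope} to get $\liminf_n (\log n)\,T_n/Y_n^2=\lambda^\ast=1/2$, and then replace $\log n$ by $\log\log T_n$. Your formula $g(y)=1/(2+\sqrt{y^2+4}-y)$ is algebraically identical to the paper's $\tfrac12-\tfrac1{y+2+\sqrt{4+y^2}}$, and $\lambda^\ast=1/2$ comes out the same. The two places where you diverge are both reasonable alternatives. First, you derive $g$ by applying Laplace's method directly to the Brownian hitting kernel rather than going through the explicit joint density of $(A,B)$ (the paper's \eqref{eq:exampleDensity}) and the substitution $a=u^2$; your concern about the $y+u\approx 1$ transition regime is real but benign, since the minimizer $u^\ast$ of $[1+(\sqrt{y+u}-1)^2]/(2u)$ always satisfies $\sqrt{y+u^\ast}=\tfrac12(y+2+\sqrt{y^2+4})\geq 2>1$, and on the complementary region $\{y+u<1\}$ the integrand decays at rate $1/(2u)>1/(2(1-y))>g(y)$, so it never competes. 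Second, you establish $\log\log T_n/\log n\to1$ by direct moment control on $\log Y_1$ and $\log T_1$, whereas the paper routes the bound $\P(T_n/Y_n^2\geq e^{n\eps})\lesssim e^{-n\eps/2}$ through the Kesten--Goldie tail estimate (Corollary \ref{corol:LinearBound}) and the sub-Gaussian tail of $X_1^{-1/2}$ (Lemma \ref{thm:FVSequenceConvergence}); your version is more elementary and avoids Theorem \ref{thm:KestenGoldieResult} entirely. The one point you should tighten: the single-term inequality $T_n\geq Y_{n-1}^2 S_n$ only gives the \emph{lower} bound $\liminf\log T_n/n\geq 2\mu$; for the matching upper bound you need the full sum $T_n=\sum_{k\leq n}Y_{k-1}^2 S_k$ together with $\log^+ S_k/k\to0$ (from $\E[\log^+ S]<\infty$) and the SLLN for $\log Y_{k-1}$, giving $T_n\leq n\max_{k\leq n}Y_{k-1}^2 S_k\leq C_\omega\, n\, e^{(2\mu+\eps)n}$ eventually. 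With that filled in, the argument is complete.
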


We note that the Law of Iterated Logarithm stated in \eqref{j16.1} indicates (but does not prove) that the spine $Z(t)$ satisfies the same  Law of Iterated Logarithm as the three-dimensional Bessel process, which is known to have the same distribution as the  one-dimensional Brownian motion conditioned not to hit 0. Hence, it is possible that the spine $Z(t)$ is distributed, at least in an asymptotic or approximate sense, as the driving Brownian motion $W_1(t)$ conditioned not to return to 0.
We plan to investigate this question in a forthcoming paper.

The remaining part of this section will be devoted to the proof of Theorem \ref{thm:LawOfTheIteratedLogartihm}, presented as a sequence of lemmas.
The formulas in the first of the lemmas are taken from \cite{KS}, Chapter 2, Remark 8.3 and Problem 8.6.
\begin{lemma}\label{j16.2}
If $W_1(0)=1$ then for $y,t>0$,
\begin{align*}
 &\P(\tau_1 \in dt ) = \frac{1}{\sqrt{2\pi t^3}}e^{-1/2t}dt,\\
 &\P(W_1(t)\in dy, \tau_1>t )
  = \frac 1 {\sqrt{2\pi t}}
 \left( \exp\left(-\frac{(1-y)^2}{2t}\right)-\exp\left(-\frac{(1+y)^2}{2t}\right)\right) dy.
\end{align*}
\end{lemma}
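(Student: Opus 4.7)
Both identities are classical consequences of the reflection principle for Brownian motion. The plan is to derive the joint density first by a reflection argument, and then obtain the hitting time density either as a corollary (by integrating in $y$ and differentiating in $t$) or by an independent application of reflection.

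For the second formula, I would invoke the reflection principle in the following form. Define the reflected process by $\widetilde{W}(s)=W_1(s)$ for $s\le\tau_1$ and $\widetilde{W}(s)=-W_1(s)$ for $s>\tau_1$; since $W_1(\tau_1)=0$, this is reflection about the level reached at time $\tau_1$. By the strong Markov property at $\tau_1$, the post-$\tau_1$ increments of $W_1$ form a Brownian motion independent of $\mathcal F_{\tau_1}$, and negation is a distributional symmetry of a Brownian motion starting from $0$; it follows that $\widetilde W$ has the same law as $W_1$. For $y>0$, on $\{\tau_1\le t,\,W_1(t)>y\}$ one has $\widetilde W(t)=-W_1(t)<-y$, and conversely any path with $\widetilde W(t)<-y<0<1=\widetilde W(0)$ must cross $0$, forcing $\tau_1\le t$. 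Hence
\[
\P(\tau_1\le t,\,W_1(t)>y)=\P(\widetilde W(t)<-y)=\P(W_1(t)<-y).
\]
Subtracting this from $\P(W_1(t)>y)$ and differentiating in $y$ produces the stated joint density
$\frac{1}{\sqrt{2\pi t}}\bigl(e^{-(y-1)^2/(2t)}-e^{-(y+1)^2/(2t)}\bigr)\,dy$.

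For the first formula, taking $y\to 0^+$ in the identity above (or splitting $\{\tau_1\le t\}$ by the sign of $W_1(t)$ and using the same reflection) gives $\P(\tau_1\le t)=2\P(W_1(t)\le 0)=2\Phi(-1/\sqrt t)$, where $\Phi$ is the standard normal distribution function. Differentiating in $t$ via the chain rule,
\[
\P(\tau_1\in dt)=2\,\phi(-1/\sqrt t)\cdot \tfrac{1}{2}t^{-3/2}\,dt=\frac{1}{\sqrt{2\pi t^3}}\,e^{-1/(2t)}\,dt,
\]
with $\phi$ the standard normal density. As a consistency check, one may alternatively integrate the joint density over $y\in(0,\infty)$ (the integral evaluates to $1-2\Phi(-1/\sqrt t)$) and then differentiate in $t$; the two routes must yield the same answer.

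The only genuine technical step is justifying that the reflection map preserves the Wiener measure, which requires the strong Markov property at $\tau_1$ combined with the symmetry of Brownian motion under negation; once this is in hand, both formulas reduce to a one-variable differentiation of a Gaussian tail. Since the computation is entirely standard (and the authors explicitly cite \cite{KS}), no substantive obstacle is expected.
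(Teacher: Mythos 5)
Your proposal is correct: the reflection identity $\P(\tau_1\le t,\,W_1(t)>y)=\P(W_1(t)<-y)$ is justified properly via the strong Markov property and symmetry, and both differentiations check out (including the consistency between integrating the absorbed density and differentiating $2\Phi(-1/\sqrt t)$). The paper gives no proof of this lemma, simply citing Karatzas--Shreve, and the cited source establishes exactly these formulas by the same reflection-principle argument, so your derivation is essentially the standard one being referenced.
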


\begin{lemma}\label{lemma:OneStoppedOneKilled}
If $W_1(0)=W_2(0)=1$ then for $y,t>0$,
\begin{align*}
&\P(W_1(\tau_2) \in dy, \tau_2\leq t,  \tau_1>\tau_2)\\
&\qquad = \frac{1}{\pi}\left[\frac{\exp(-((1-y)^2+1)/(2t)}{(1-y)^2+1}-\frac{\exp(-((1+y)^2+1)/(2t)}{(1+y)^2+1}\right]dy.
\end{align*}
\end{lemma}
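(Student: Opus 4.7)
The plan is to use independence of $W_1$ and $W_2$ to factor the joint law, then perform a one-variable integration in the time variable. Since $\tau_2$ is determined by $W_2$ alone, while the events $\{W_1(\tau_2)\in dy\}$ and $\{\tau_1>\tau_2\}$ depend only on $W_1$ once $\tau_2$ is fixed, I would condition on $\tau_2=s$ and write
\begin{align*}
\P(W_1(\tau_2)\in dy,\,\tau_2\in ds,\,\tau_1>\tau_2)
= \P(W_1(s)\in dy,\,\tau_1>s)\,\P(\tau_2\in ds).
\end{align*}

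Next I would substitute the two formulas from Lemma \ref{j16.2}: the density $\P(\tau_2\in ds)=(2\pi s^3)^{-1/2}e^{-1/(2s)}ds$ and the sub-probability density $\P(W_1(s)\in dy,\tau_1>s) = (2\pi s)^{-1/2}\bigl(e^{-(1-y)^2/(2s)}-e^{-(1+y)^2/(2s)}\bigr)dy$. Multiplying these gives
\begin{align*}
\frac{1}{2\pi s^{2}}\Bigl(e^{-((1-y)^2+1)/(2s)}-e^{-((1+y)^2+1)/(2s)}\Bigr)\,dy\,ds.
\end{align*}
Integrating $s$ from $0$ to $t$ then produces the asserted formula.

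The only computation that requires a moment's thought is the one-dimensional integral
\begin{align*}
I(c,t):=\int_0^t \frac{1}{2\pi s^{2}}\,e^{-c/(2s)}\,ds,\qquad c>0,
\end{align*}
applied once with $c=(1-y)^2+1$ and once with $c=(1+y)^2+1$. This is handled by the substitution $u=c/(2s)$, so that $du=-c/(2s^2)\,ds$; as $s$ runs from $0$ to $t$, $u$ runs from $\infty$ down to $c/(2t)$, and one gets
\begin{align*}
I(c,t)=\frac{1}{\pi c}\int_{c/(2t)}^{\infty} e^{-u}\,du = \frac{e^{-c/(2t)}}{\pi c}.
\end{align*}
Plugging this into the two terms yields exactly the right-hand side in the lemma.

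I do not expect any serious obstacle: the proof is essentially a direct consequence of independence together with the two classical formulas quoted in Lemma \ref{j16.2}. The one point to be careful about is justifying that $\{\tau_1>\tau_2\}$ is captured by the $W_1$-factor on the conditioning event $\{\tau_2=s\}$, i.e., that it coincides a.s.\ with $\{\tau_1>s\}$ under this conditioning; this follows because $\tau_2$ has a density (no atom at $\tau_1$) and $W_1,W_2$ are independent, so the conditional law of $W_1$ given $\tau_2=s$ is just its unconditional law.
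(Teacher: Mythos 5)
Your proof is correct and follows essentially the same route as the paper: factor the joint law using independence of $W_1$ and $W_2$ together with the two formulas from Lemma \ref{j16.2}, combine the exponentials, and integrate in $s$ over $(0,t)$. The only difference is that you spell out the substitution $u=c/(2s)$ for the final integral, which the paper dismisses as ``easy integration,'' and your computation of that integral is correct.
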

\begin{proof}
 We use Lemma \ref{j16.2} as follows,
\begin{align*}
 & \P(W_1(\tau_2) \in dy, \tau_2\leq t,  \tau_1>\tau_2)
= \int_0^{t}\P(W_1(s) \in dy, \tau_1>s)\P(\tau_2 \in ds)\\
&\ = \int_0^{t}
\frac 1 {\sqrt{2\pi s}}
 \left( \exp\left(-\frac{(1-y)^2}{2s}\right)-\exp\left(-\frac{(1+y)^2}{2s}\right)\right) dy
 \frac{1}{\sqrt{2\pi s^3}}e^{-1/2s}ds\\
&\ = \int_0^{t}
\frac 1 {2\pi s^2}
 \left( \exp\left(-\frac{(1-y)^2+1}{2s}\right)-\exp\left(-\frac{(1+y)^2+1}{2s}\right)\right) dyds.
\end{align*}
Now easy integration yields the formula stated in the lemma.
\end{proof}

\begin{lemma}\label{propo:jointDistribution}
If $W_1(0)=W_2(0)=1$ then for $y,t>0$,
\begin{align}\label{j22.1}
\P(Y_1\in dy,T_1\in dt)=\frac 1 {\pi t^2}
\left(\exp\left(-\frac{(1-y)^2+1}{2t}\right)-
\exp\left(-\frac{(1+y)^2+1}{2t}\right)\right)dtdy.
\end{align}
\end{lemma}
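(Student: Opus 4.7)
The plan is to deduce the joint density from Lemma \ref{lemma:OneStoppedOneKilled} by differentiating in $t$ and using symmetry between $W_1$ and $W_2$.

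First I would observe that at the time $T_1=\min(\tau_1,\tau_2)$, exactly one of the two Brownian motions is at $0$ and the other is strictly positive, so $Y_1=\max(W_1(T_1),W_2(T_1))$ is simply the position of the surviving particle. Splitting according to which particle hits $0$ first and using the fact that $(W_1,W_2)\stackrel{d}{=}(W_2,W_1)$, I get
\[
\P(Y_1\in dy,T_1\in dt)=2\,\P(W_1(\tau_2)\in dy,\tau_2\in dt,\tau_1>\tau_2).
\]

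Next, I would obtain the joint density on the right by differentiating the expression in Lemma \ref{lemma:OneStoppedOneKilled} with respect to $t$. The key simplification is that each summand there has the form
\[
F_a(t)=\frac{\exp(-a/(2t))}{a},\qquad\text{with }a=(1\mp y)^2+1>0,
\]
so $\partial_t F_a(t)=\tfrac{1}{2t^2}\exp(-a/(2t))$, and the awkward factor $a$ in the denominator cancels. Applying this to both terms of Lemma \ref{lemma:OneStoppedOneKilled} gives
\[
\P(W_1(\tau_2)\in dy,\tau_2\in dt,\tau_1>\tau_2)=\frac{1}{2\pi t^2}\!\left(\exp\!\left(-\tfrac{(1-y)^2+1}{2t}\right)-\exp\!\left(-\tfrac{(1+y)^2+1}{2t}\right)\right)dy\,dt.
\]
Multiplying by the factor $2$ from symmetry converts $2\pi$ into $\pi$, producing exactly the formula \eqref{j22.1}.

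I expect no serious obstacle: the main point is the algebraic cancellation of the denominator $(1\mp y)^2+1$ after differentiation, which is what makes the final formula so clean. A minor technical step is the interchange of $\partial_t$ with the expression in Lemma \ref{lemma:OneStoppedOneKilled}, which is justified because the right-hand side there is an absolutely convergent integral in $s$ of a smooth, rapidly decaying integrand, so the density in $t$ is simply the integrand evaluated at $s=t$ (this is exactly what the intermediate step of the proof of Lemma \ref{lemma:OneStoppedOneKilled} already made explicit).
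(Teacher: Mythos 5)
Your proof is correct and matches the paper's argument: both use the symmetry $(W_1,W_2)\stackrel{d}{=}(W_2,W_1)$ to reduce to $2\,\P(W_1(\tau_2)\in dy,\tau_2\leq t,\tau_1>\tau_2)$ and then invoke Lemma \ref{lemma:OneStoppedOneKilled}, with your differentiation in $t$ (and the clean cancellation of $(1\mp y)^2+1$) simply making explicit the step the paper leaves as ``the claim now follows.''
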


\begin{proof}
It follows from the definition that 
\begin{align*}
(Y_1,T_1)\stackrel{d}{=} (W_1(\tau_2),\tau_2) \1(\tau_1>\tau_2)+(W_2(\tau_1),\tau_1) \1(\tau_2>\tau_1),
\end{align*}
so for Borel sets $C$,
\begin{align*}
\P(Y_1\in C, T_1\leq t)
&= \P(W_1(\tau_2) \in C, \tau_2\leq t,  \tau_1>\tau_2)
+\P(W_2(\tau_1) \in C, \tau_1\leq t,  \tau_2>\tau_1)\\
&= 2\P(W_1(\tau_2) \in C, \tau_2\leq t,  \tau_1>\tau_2).
\end{align*}
The claim now follows from Lemma \ref{lemma:OneStoppedOneKilled}.
\end{proof}

Let $A=Y_1^{-2}$ and $B=T_1Y_1^{-2}$. Lemma \ref{propo:jointDistribution} and a standard calculation, left to the reader, show that for $a,b>0$,
\begin{align}
 &\P(A\in da, B\in db)\notag\\
 &\quad = \frac{1}{2\pi b^2\sqrt{a}}\left[\exp\left(-\frac{\left(a^{1/2}-\frac{1}{2}\right)^2+\frac{1}{4}}{b}\right)-
 \exp\left(-\frac{\left(a^{1/2}+\frac{1}{2}\right)^2+\frac{1}{4}}{b}\right)\right]\, db \, da.\label{eq:exampleDensity}
\end{align}

\begin{lemma}\label{lemma:minimumExponent}
Suppose that $\mu$ is a finite positive measure on $[a,b]$, it is absolutely continuous with respect to  Lebesgue measure, and $\mu(I)>0$ for every interval $I\subset [a,b]$ of strictly positive length. Assume that 
$f$ is a continuous function on the interval $[a,b]$.
Then 
$$\lim_{\varepsilon\to 0^+}\varepsilon\log\int_a^b e^{-f(x)/\varepsilon}\mu (dx)=-f_{\min},$$
where $f_{\min}=\inf_{x\in[a,b]}f(x)$.
\end{lemma}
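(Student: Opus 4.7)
The plan is a standard Laplace/Varadhan-style two-sided bound, where the upper bound is trivial and the lower bound uses the continuity of $f$ together with the assumption that $\mu$ charges every subinterval.

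For the upper bound, I would simply pull out the minimum: since $f(x)\geq f_{\min}$ on $[a,b]$, we have
$$\int_a^b e^{-f(x)/\varepsilon}\,\mu(dx) \leq e^{-f_{\min}/\varepsilon}\mu([a,b]).$$
Taking $\varepsilon\log$ of both sides and letting $\varepsilon\to 0^+$ (using that $\mu([a,b])<\infty$, so $\varepsilon\log\mu([a,b])\to 0$) gives
$$\limsup_{\varepsilon\to0^+}\varepsilon\log\int_a^b e^{-f(x)/\varepsilon}\mu(dx)\leq -f_{\min}.$$

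For the matching lower bound, fix $\delta>0$. Since $f$ is continuous on the compact set $[a,b]$, it attains its minimum at some $x^\ast\in[a,b]$ and there exists a subinterval $I_\delta\subset [a,b]$ of positive length (a neighborhood of $x^\ast$ intersected with $[a,b]$) on which $f(x)\leq f_{\min}+\delta$. Restricting the integral to $I_\delta$ gives
$$\int_a^b e^{-f(x)/\varepsilon}\mu(dx)\geq \int_{I_\delta}e^{-f(x)/\varepsilon}\mu(dx)\geq e^{-(f_{\min}+\delta)/\varepsilon}\mu(I_\delta).$$
By the hypothesis on $\mu$, we have $\mu(I_\delta)>0$, so $\varepsilon\log\mu(I_\delta)\to 0$. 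Taking $\varepsilon\log$ yields
$$\liminf_{\varepsilon\to0^+}\varepsilon\log\int_a^b e^{-f(x)/\varepsilon}\mu(dx)\geq -(f_{\min}+\delta).$$
Since $\delta>0$ is arbitrary, the liminf is bounded below by $-f_{\min}$, completing the proof.

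There is no real obstacle here; the only subtlety is making sure that when $x^\ast$ lies at an endpoint of $[a,b]$ we still get a one-sided subinterval of positive length inside $[a,b]$, which is automatic. The assumption that $\mu$ is absolutely continuous with respect to Lebesgue measure is not actually used directly — only the weaker fact that $\mu$ assigns positive mass to every subinterval of positive length is needed, and that is stated explicitly as a hypothesis.
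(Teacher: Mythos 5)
Your proof is correct and is essentially identical to the paper's: both sides use the same trivial upper bound $e^{-f_{\min}/\varepsilon}\mu([a,b])$ and the same lower bound obtained by restricting to a subinterval $I_\delta$ where $f\leq f_{\min}+\delta$ and invoking $\mu(I_\delta)>0$. Your closing remark that absolute continuity is superfluous (only the positivity-on-subintervals hypothesis is used) is accurate but does not change the argument.
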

\begin{proof}
For $\eps>0$,
\begin{equation}
\int_a^b e^{-f(x)/\varepsilon}\mu (dx) \leq e^{-f_{\min}/\varepsilon}\mu([a,b]). \label{eq:upperBoundMinLemma} 
\end{equation}

Suppose that $f$ attains the minimum at $x_0\in[a,b]$. For any $\delta>0$ there is an interval $I_{\delta}\subset [a,b]$ with strictly positive length,
containing $x_0$, and such that for all $x\in I_\delta$ we have $f(x)\leq f_{\min}+\delta$. Then
\begin{equation}\label{eq:lowerBoundMinLemma}
 e^{-(f_{\min}+\delta)/\varepsilon}\mu(I_{\delta})\leq \int_a^b e^{-f(x)/\varepsilon}\mu (dx).
\end{equation}
Since
\begin{align*}
\limsup_{\varepsilon\to 0^+}\varepsilon\log \mu([a,b])
= \liminf_{\varepsilon\to 0^+}\varepsilon\log \mu(I_\delta) =0,
\end{align*}
estimates \eqref{eq:upperBoundMinLemma}  and \eqref{eq:lowerBoundMinLemma} yield
$$-(f_{\min}+\delta) \leq \liminf_{\varepsilon\to 0^+}\varepsilon\log \int_a^b e^{-f(x)/\varepsilon}\mu (dx)\leq \limsup_{\varepsilon\to 0^+} \varepsilon\log\int_a^b e^{-f(x)/\varepsilon}\leq -f_{\min}.$$
The claim follows by letting $\delta\downarrow 0$.
\end{proof}

The next lemma is elementary so we leave the proof to the reader.

\begin{lemma}\label{lemma:differenceBetweenIEDFunctions}
Assume that $\lambda_1>\lambda_2\geq 0$, and $f_1$ and $f_2$ are nonnengative functions such that
$$\lim_{\varepsilon\to 0^+}\varepsilon\log f_j(\varepsilon)=-\lambda_j,$$
for $j=1,2$. Then 
$$\lim_{\varepsilon\to 0^+}\varepsilon\log (f_2(\varepsilon)\pm f_1(\varepsilon))=-\lambda_2.$$
\end{lemma}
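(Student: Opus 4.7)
The plan is to factor out the dominant term $f_2$ and argue that, on the $\varepsilon\log$-scale, the ratio $f_1/f_2$ contributes nothing in the limit. The heuristic is simply that since $\lambda_2<\lambda_1$, the function $f_1$ decays exponentially faster than $f_2$ as $\varepsilon\to 0^+$.

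First I would observe that, directly from the hypothesis,
\[
\varepsilon\log\frac{f_1(\varepsilon)}{f_2(\varepsilon)} = \varepsilon\log f_1(\varepsilon) - \varepsilon\log f_2(\varepsilon) \longrightarrow -\lambda_1 + \lambda_2 < 0,
\]
which forces $f_1(\varepsilon)/f_2(\varepsilon)\to 0$ as $\varepsilon\to 0^+$. In particular, for all sufficiently small $\varepsilon$ one has $f_2(\varepsilon)>f_1(\varepsilon)>0$, so $f_2(\varepsilon)\pm f_1(\varepsilon)$ is strictly positive and the logarithm is well defined. (For this step I am implicitly using that $f_1,f_2$ must be strictly positive near $0$, since otherwise $\varepsilon\log f_j(\varepsilon)$ would equal $-\infty$ there and could not converge to the finite value $-\lambda_j$.)

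Next I would factor
\[
f_2(\varepsilon)\pm f_1(\varepsilon) = f_2(\varepsilon)\bigl(1\pm f_1(\varepsilon)/f_2(\varepsilon)\bigr)
\]
and take $\varepsilon\log$ of both sides:
\[
\varepsilon\log\bigl(f_2(\varepsilon)\pm f_1(\varepsilon)\bigr) = \varepsilon\log f_2(\varepsilon) + \varepsilon\log\bigl(1\pm f_1(\varepsilon)/f_2(\varepsilon)\bigr).
\]
By hypothesis the first term tends to $-\lambda_2$. For the second, since $f_1(\varepsilon)/f_2(\varepsilon)\to 0$ the expression $\log(1\pm f_1/f_2)$ tends to $\log 1=0$, and multiplying a bounded (in fact vanishing) quantity by $\varepsilon\to 0$ produces $0$. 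Passing to the limit yields $-\lambda_2$ on both sides.

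There is no real obstacle here: the statement is essentially the observation that a logarithmic-rate limit is determined by whichever summand has the slower exponential decay, and the only mild care required is to ensure $f_2-f_1>0$ for small $\varepsilon$, which follows automatically from the gap $\lambda_1>\lambda_2$.
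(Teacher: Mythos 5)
Your proof is correct. The paper does not actually supply a proof for this lemma; it simply states that the result is elementary and leaves it to the reader, so there is no paper argument to compare against. Your approach—factoring $f_2(\varepsilon)\pm f_1(\varepsilon)=f_2(\varepsilon)\bigl(1\pm f_1(\varepsilon)/f_2(\varepsilon)\bigr)$, noting that $\varepsilon\log\bigl(f_1(\varepsilon)/f_2(\varepsilon)\bigr)\to-(\lambda_1-\lambda_2)<0$ forces $f_1/f_2\to 0$, and then observing that $\varepsilon\log\bigl(1\pm f_1/f_2\bigr)\to 0$—is exactly the standard argument, and you correctly address the one point that needs care, namely that $f_2(\varepsilon)-f_1(\varepsilon)>0$ for small $\varepsilon$ so the logarithm in the minus case is well defined.
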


Let  $H_1(x)=x^{-1}$.
\begin{propo}\label{propo:inverseDependencyPotentialCalculation}
  The random vector $(A,B)$ with density \eqref{eq:exampleDensity} has $(1,H_1)$-\ldm{} given by
$$g(x)=\frac{1}{2}-\frac{1}{x+2+\sqrt{4+x^2}}.$$
\end{propo}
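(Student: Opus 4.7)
The plan is to substitute the explicit density \eqref{eq:exampleDensity} into the definition \eqref{eq:g(y)AsLimit}, perform the $b$-integration exactly, and then apply Lemma \ref{lemma:minimumExponent} to the resulting one-dimensional Laplace-type integral. The event $\{\varepsilon Ay+B<\varepsilon\}$ forces $b<\varepsilon(1-ay)$ and hence $a<1/y$. Using the elementary antiderivative
\[
\int_0^{\varepsilon(1-ay)}\frac{1}{b^{2}}\,e^{-c/b}\,db \;=\; \frac{1}{c}\,e^{-c/(\varepsilon(1-ay))},\qquad c>0,
\]
applied to each of the two exponentials in \eqref{eq:exampleDensity}, one obtains
\[
\P(\varepsilon Ay+B<\varepsilon) = \int_0^{1/y}\frac{1}{2\pi\sqrt{a}}\left[\frac{e^{-c_1(a)/(\varepsilon(1-ay))}}{c_1(a)}-\frac{e^{-c_2(a)/(\varepsilon(1-ay))}}{c_2(a)}\right]da,
\]
where $c_1(a)=(\sqrt{a}-1/2)^2+1/4$ and $c_2(a)=(\sqrt{a}+1/2)^2+1/4$.

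Since $c_1(a)<c_2(a)$ for every $a>0$, Lemma \ref{lemma:differenceBetweenIEDFunctions} guarantees that only the first exponential contributes to the leading exponential rate. After truncating the region of integration to $[0,1/y-\delta]$ for small $\delta>0$ (the contribution from $[1/y-\delta,1/y)$ is negligible because $c_1(a)/(1-ay)\to\infty$ at the right endpoint), Lemma \ref{lemma:minimumExponent} yields
\[
g(y) \;=\; -\lim_{\varepsilon\to 0^+}\varepsilon\log\P(\varepsilon Ay+B<\varepsilon) \;=\; \min_{0\leq a<1/y}\frac{(\sqrt{a}-1/2)^2+1/4}{1-ay}.
\]
Substituting $u=\sqrt{a}$, the critical-point equation for $h(u):=((u-1/2)^2+1/4)/(1-u^2y)$ simplifies, after clearing denominators, to the quadratic $yu^2-(y+2)u+1=0$ with roots $u_\pm=((y+2)\pm\sqrt{y^2+4})/(2y)$. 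Since $u_-u_+=1/y$, exactly one of these roots, namely $u_\ast:=u_-$, lies in $(0,1/\sqrt{y})$; it must give the minimum since $h(0)=1/2$ is finite and $h(u)\to\infty$ as $u\to(1/\sqrt{y})^-$.

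Finally, plugging $u_\ast$ into $h$ and using the quadratic relation $u_\ast^2 y=u_\ast(y+2)-1$ together with the identity $(y+2)^2-(y^2+4)=4y$, a direct simplification (with $r:=\sqrt{y^2+4}$) gives
\[
h(u_\ast)\;=\;\frac{r+y-2}{4y}\;=\;\frac{1}{2}-\frac{1}{y+2+\sqrt{y^2+4}},
\]
which matches the asserted formula. The main obstacle I expect lies in the middle step: the measure $da/(\sqrt{a}\,c_1(a))$ has an integrable singularity at $a=0$ and the exponent blows up at $a=1/y$, so some care is needed when matching the integral to the hypotheses of Lemma \ref{lemma:minimumExponent}. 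This is handled by truncating to $[0,1/y-\delta]$, applying the lemma there, and observing that the minimizer $u_\ast^2$ is a fixed interior point independent of $\delta$, so letting $\delta\downarrow 0$ is harmless.
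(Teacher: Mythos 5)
Your proof is correct and follows essentially the same route as the paper: integrate out $b$ exactly, apply Lemma \ref{lemma:minimumExponent} to extract the Laplace rate of each term, discard the sub-dominant second exponential via Lemma \ref{lemma:differenceBetweenIEDFunctions}, and pin down the minimizer of $u\mapsto\left(\left(u-1/2\right)^2+1/4\right)/\left(1-u^2y\right)$ by elementary calculus (the paper's stated minimizer $2/\big(y+2+\sqrt{4+y^2}\big)$ is exactly your root $u_-$, and your closed form $(r+y-2)/(4y)$ with $r=\sqrt{y^2+4}$ agrees with the displayed $\tfrac12-\tfrac{1}{y+2+r}$). The only differences are cosmetic: the paper substitutes $a=u^2$ \emph{before} invoking Lemma \ref{lemma:minimumExponent} and computes both exponential rates explicitly, whereas you stay in the $a$-variable, insert a $\delta$-truncation to sidestep the blow-up of the exponent at the right endpoint, and compress the rate comparison into the pointwise inequality $c_1<c_2$ (which does force strict dominance of the minima, so Lemma \ref{lemma:differenceBetweenIEDFunctions} applies as you claim).
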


\begin{proof}
It has been proved in \cite[Prop. 8.1]{IEDPaper} that  $g(0)=1/4$.

We will compute $g(x)$ for $x>0$.
In the following calculation we use formula \eqref{eq:exampleDensity}, and the substitution $a=u^2$ on the last line.
\begin{align}
& \P(\varepsilon Ax+B<\varepsilon)\notag \\
&=\int_0^{1/x}\int_0^{\varepsilon-\varepsilon ax}
\frac{1}{2\pi b^2\sqrt{a}}\left[\exp\left(-\frac{\left(a^{1/2}-\frac{1}{2}\right)^2+\frac{1}{4}}{b}\right)-
 \exp\left(-\frac{\left(a^{1/2}+\frac{1}{2}\right)^2+\frac{1}{4}}{b}\right)\right]\, db \, da \notag \\
&=\int_0^{1/x} \frac{1}{2\pi\sqrt{a}}
\left[\frac{\exp\left(-\frac{\left(a^{1/2}-1/2\right)^2+1/4}{\varepsilon(1- ax)}\right)}
{\left(a^{1/2}-1/2\right)^2+1/4}
-\frac{\exp\left(-\frac{\left(a^{1/2}+1/2\right)^2+1/4}{\varepsilon(1- ax)}\right)}{\left(a^{1/2}+1/2\right)^2+1/4}\right]\, da \notag \\
&=\int_0^{1/\sqrt{x}} \frac{1}{\pi}
\left[\frac{\exp\left(-\frac{\left(u-1/2\right)^2+1/4}{\varepsilon(1- u^2x)}\right)}{\left(u-1/2\right)^2+1/4}
-\frac{\exp\left(-\frac{\left(u+1/2\right)^2+1/4}{\varepsilon(1- u^2x)}\right)}{\left(u+1/2\right)^2+1/4}\right]\, du.\label{j16.3}
\end{align}
If we define measures $\mu_1$ and $\mu_2$ by
\begin{align*}
\mu_1([x_1, x_2])& =
\int_{x_1}^{x_2} \frac{1}{\pi}
\frac{1}{\left(u-1/2\right)^2+1/4}
\, du,\\
\mu_2([x_1, x_2])& =
\int_{x_1}^{x_2} \frac{1}{\pi}
\frac{1}{\left(u+1/2\right)^2+1/4}
\, du,
\end{align*}
then \eqref{j16.3} can be written as
\begin{align}\label{j16.5}
& \P(\varepsilon Ax+B<\varepsilon) \\
&=\int_0^{1/\sqrt{x}} \exp\left(-\frac{\left(u-1/2\right)^2+1/4}{\varepsilon(1- u^2x)}\right) \mu_1(du)
- \int_0^{1/\sqrt{x}} \exp\left(-\frac{\left(u+1/2\right)^2+1/4}{\varepsilon(1- u^2x)}\right) \mu_2(du).\notag
\end{align}

The function $u\mapsto \frac{\left(u-1/2\right)^2+1/4}{1- u^2x}$ attains the minimum value of
$$\frac{1}{2}-\frac{1}{x+2+\sqrt{4+x^2}},$$ at
$\frac{2}{\sqrt{4+x^2}+2+x}\in (0,1/\sqrt{x})$. 
Thus Lemma \ref{lemma:minimumExponent} implies that
\begin{align}\label{j16.4}
\lim_{\varepsilon\to 0^+}\varepsilon\log\int_0^{1/\sqrt{x}} \exp\left(-\frac{\left(u-1/2\right)^2+1/4}{\varepsilon(1- u^2x)}\right) \mu_1(du)=-\frac{1}{2}+\frac{1}{x+2+\sqrt{4+x^2}}.
\end{align}

The function $u\mapsto \frac{\left(u+1/2\right)^2+1/4}{1- u^2x}$
is  increasing  on $[0,1/x]$, so it achieves the minimum of $1/2$ at $0$.
Lemma \ref{lemma:minimumExponent} yields
\begin{align*}
\lim_{\varepsilon\to 0^+}\varepsilon\log\int_0^{1/\sqrt{x}} \exp\left(-\frac{\left(u+1/2\right)^2+1/4}{\varepsilon(1- u^2x)}\right) \mu_2(du)=-1/2.
\end{align*}
This, \eqref{j16.5}, \eqref{j16.4} and 
Lemma \ref{lemma:differenceBetweenIEDFunctions} imply that
\begin{align*}
\lim_{\varepsilon\to 0^+}\varepsilon\log \P(\varepsilon Ax+B<\varepsilon) =-\frac{1}{2}+\frac{1}{x+2+\sqrt{4+x^2}}.
\end{align*}
The proposition now follows from \eqref{eq:g(y)AsLimit}.
\end{proof}

Recall  Definitions \ref{j17.2} and \ref{j17.1}.

\begin{propo}\label{propo:phiCalculation}
 We have 
\begin{align}\label{j17.4}
\phi_1(\lambda)=
\begin{cases}
\frac{1}{4} \left(2 \sqrt{\lambda-\lambda^2}+1\right)
& \text{if  } \lambda \in[0, 1/2),\\
1/2 & \text{if  } \lambda \geq 1/2.
\end{cases}
\end{align}
The fixed point of $\phi_1$ is equal to $\lambda^\ast=1/2$.
\end{propo}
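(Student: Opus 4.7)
\bigskip

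\noindent\textbf{Proof plan.} The plan is to compute $\phi_1(\lambda)=\inf_{y>0}\{g(y)+\lambda/y\}$ directly from the explicit formula for $g$ given in Proposition \ref{propo:inverseDependencyPotentialCalculation}, and then to identify $\lambda^\ast$ as the unique fixed point of $\phi_1$ via Lemma \ref{pro:lambdastar}(ii) and Lemma \ref{lemma:concavePhi}(c).

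First I would simplify $g$ by a substitution that trivializes the square root. Setting $y=2\sinh t$ for $t>0$ gives $\sqrt{4+y^2}=2\cosh t$, so $y+2+\sqrt{4+y^2}=2(e^t+1)$ and hence $g(y)=\tfrac12-\tfrac{1}{2(e^t+1)}=\tfrac{1}{2(1+e^{-t})}$. Writing $u=e^{-t}\in(0,1)$ and using $1/y=u/(1-u^2)$, a short algebraic manipulation shows
\[
g(y)+\frac{\lambda}{y}=\frac{1+(2\lambda-1)\,u}{2(1-u^{2})}=:f(u).
\]
The problem has thus been reduced to minimizing the rational function $f(u)$ on $(0,1)$.

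Next I would split into cases based on the sign of $c:=2\lambda-1$. If $c\ge 0$ (i.e., $\lambda\ge 1/2$), both the numerator of $f$ and $1/(1-u^{2})$ are nonnegative and nondecreasing on $(0,1)$, so $f$ is nondecreasing and $\inf_{u\in(0,1)}f(u)=\lim_{u\to 0^+}f(u)=1/2$, yielding $\phi_1(\lambda)=1/2$. If $c<0$ (i.e., $\lambda<1/2$), differentiation gives $f'(u)=\frac{cu^{2}+2u+c}{2(1-u^{2})^{2}}$, whose unique root in $(0,1)$ is $u^\ast=\frac{-1+\sqrt{1-c^{2}}}{c}$. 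At this critical point $1+cu^\ast=\sqrt{1-c^{2}}$ and a short computation (rationalizing the resulting fraction) yields
\[
f(u^\ast)=\frac{c^{2}}{4\bigl(1-\sqrt{1-c^{2}}\bigr)}=\frac{1+\sqrt{1-c^{2}}}{4}.
\]
Substituting $1-c^{2}=4(\lambda-\lambda^{2})$ gives the announced formula $\phi_1(\lambda)=\tfrac14\bigl(1+2\sqrt{\lambda-\lambda^{2}}\bigr)$.

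For the value of $\lambda^\ast$, I would invoke Lemma \ref{lemma:concavePhi}(c): since $\phi_1(0)=1/4>0$, the map $\phi_1$ has at most one fixed point. The closed formula above shows that $\phi_1(1/2)=1/2$, so $1/2$ is that unique fixed point. Since $\lambda^\ast<\infty$ (because $g$ is bounded above by $1/2$, so Lemma \ref{lemma:concavePhi}(b) applies), Lemma \ref{pro:lambdastar}(ii) guarantees $\phi_1(\lambda^\ast)=\lambda^\ast$, and we conclude $\lambda^\ast=1/2$.

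The only step that requires any care is the algebraic minimization in the case $\lambda<1/2$ — verifying that the chosen root of the quadratic $cu^{2}+2u+c=0$ is the one lying in $(0,1)$ (the other root is $u=(-1-\sqrt{1-c^{2}})/c>1$ when $c\in(-1,0)$), and then correctly rationalizing $f(u^\ast)$. Everything else reduces to the hyperbolic substitution and an application of the uniqueness of fixed points.
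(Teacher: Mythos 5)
Your proof is correct and follows essentially the same strategy as the paper's — a direct minimization of $y\mapsto g(y)+\lambda/y$ — but your hyperbolic substitution $y=2\sinh t$, $u=e^{-t}$ is a genuine simplification: it turns the square-root expression into the rational function $f(u)=\frac{1+(2\lambda-1)u}{2(1-u^2)}$, which makes the case split on $\operatorname{sign}(2\lambda-1)$, the critical-point computation, and the verification that only one root lies in $(0,1)$ all transparent. The paper simply asserts the minimizer $x=\frac{4\sqrt{\lambda(1-\lambda)}}{1-2\lambda}$ without derivation, so your version is more self-contained. One small imprecision: citing Lemma \ref{lemma:concavePhi}(b) alone does not directly give $\lambda^\ast<\infty$; the cleanest route is to note from your explicit formula that $\phi_1(\lambda)=1/2<\lambda$ for every $\lambda>1/2$, so Lemma \ref{pro:lambdastar}(i) forces $\lambda^\ast\le 1/2$, while $\phi_1(1/2)=1/2$ gives $\lambda^\ast\ge 1/2$ by the same lemma — this avoids appealing to concavity and uniqueness of fixed points altogether.
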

\begin{proof}
For a fixed $\lambda\in[0, 1/2)$ the function 
\begin{equation}\label{j17.3}
 x\mapsto \frac{1}{2}-\frac{1}{x+2+\sqrt{x^2+4}}+\frac{\lambda}{x}
\end{equation}
attains the minimum of $\frac{1}{4} \left(2 \sqrt{\lambda-\lambda^2}+1\right)$ at $x=\frac{4\sqrt{\lambda(1-\lambda)}}{1-2\lambda}$.
For $\lambda \geq 1/2$, the function \eqref{j17.3}
attains the minimum of $1/2$ at $x=\infty$. This proves \eqref{j17.4}.
It is easy to check that $\phi_1(1/2) =1/2$ and there are no other fixed points.
\end{proof}

\begin{lemma}\label{lemma:subgaussian}
If $X$ is an $\IED_{H_1}^1(\lambda)$-random variable with $\lambda>0$, then 
$$\lim_{t\to\infty}\frac1{t^2}\log \P(X^{-1/2}\geq t)=-\lambda.$$
\end{lemma}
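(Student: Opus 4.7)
The plan is to convert the claim about the right tail of $X^{-1/2}$ into a statement about the left tail of $X$ and then invoke the definition of $\IED$. Since $X \geq 0$ a.s., for every $t > 0$ we have
\[
\P(X^{-1/2} \geq t) = \P(X \leq t^{-2}).
\]
Setting $x = t^{-2}$, we have $H_1(x) = 1/x = t^2$ and $x \to 0^+$ as $t \to \infty$, so the desired identity
\[
\lim_{t\to\infty}\frac{1}{t^2}\log \P(X^{-1/2}\geq t) = -\lambda
\]
is equivalent to
\[
\lim_{x\to 0^+}\frac{-\log \P(X \leq x)}{H_1(x)} = \lambda,
\]
which is almost the defining $\IED^{1}_{H_1}(\lambda)$ limit except that it uses the non-strict inequality $X \leq x$ rather than $X < x$.

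To resolve this discrepancy I would establish the two matching one-sided bounds separately. The $\limsup$ bound is immediate: since $\P(X \leq x) \geq \P(X < x)$,
\[
\limsup_{x\to 0^+}\frac{-\log \P(X \leq x)}{H_1(x)} \leq \lim_{x\to 0^+}\frac{-\log \P(X < x)}{H_1(x)} = \lambda.
\]
For the matching $\liminf$ bound, for arbitrary $\eps > 0$ I would use the sandwich $\P(X \leq x) \leq \P(X < x(1+\eps))$, combined with the regular variation identity $H_1(x(1+\eps)) = H_1(x)/(1+\eps)$, to obtain
\[
\frac{-\log \P(X \leq x)}{H_1(x)} \geq \frac{1}{1+\eps}\cdot\frac{-\log \P(X < x(1+\eps))}{H_1(x(1+\eps))}.
\]
Taking $\liminf$ as $x \to 0^+$ gives $\liminf \geq \lambda/(1+\eps)$, and then $\eps \downarrow 0$.

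Overall, the argument is essentially a direct change of variables, and I do not anticipate any substantive obstacle; the only mild technical point is the $\leq$ versus $<$ distinction, which is handled by the one-parameter sandwich above. The hypothesis $\lambda > 0$ simply ensures the statement is non-trivial and that the IED limit in the definition is meaningful.
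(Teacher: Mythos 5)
Your proof is correct. The paper does not give an argument here at all: it simply refers the reader to \cite[Prop.~3.6 and Example~3.8]{IEDPaper}, whereas you supply a short self-contained derivation. The substance of your argument is exactly what one would expect to find behind that citation: the identity $\P(X^{-1/2}\ge t)=\P(X\le t^{-2})$ reduces the claim (after the substitution $x=t^{-2}$, $H_1(x)=t^2$) to showing $\lim_{x\to0^+}\frac{-\log\P(X\le x)}{H_1(x)}=\lambda$, and your $(1+\eps)$-sandwich neatly reconciles the non-strict inequality $\P(X\le x)$ with the strict one $\P(X<x)$ appearing in Definition \ref{definition:IEDRandomVariable}, using that $H_1$ is exactly $(-1)$-homogeneous so $H_1(x(1+\eps))=H_1(x)/(1+\eps)$. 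One small point worth making explicit: the hypothesis $\lambda>0$ is not merely for non-triviality — it forces $\P(X=0)=0$ (otherwise $\P(X<x)$ would be bounded below and the IED limit would be $0$), which is what makes $X^{-1/2}$ a.s. well defined and justifies the equivalence $\{X^{-1/2}\ge t\}=\{X\le t^{-2}\}$. With that remark added, your argument is a clean replacement for the external reference.
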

\begin{proof}
See \cite[Prop. 3.6 and Example 3.8]{IEDPaper}.
\end{proof}

Let
\begin{equation*}
 (\Theta_n,\Lambda_n)
 =\left(\frac{Y_{n+1}}{Y_{n}},\frac{T_{n+1}-T_n}{Y_n^2}\right) \end{equation*}
for $n\geq 0$.
Then
$$\frac{T_{n}}{Y_{n}^2}=\frac{T_{n-1}+Y_{n-1}^2\Lambda_{n-1}}
{\Theta_{n-1}^2Y_{n-1}^2}
=\frac{1}{\Theta_{n-1}^2}\frac{T_{n-1}}{Y_{n-1}^2}+\frac{\Lambda_{n-1}}{\Theta_{n-1}^2}.$$
If we set 
\begin{align}\label{j20.10}
X_0&=0,\\
X_n&=T_{n}/Y_{n}^2,\label{j20.11} \\
A_n&=\Theta_{n-1}^{-2},\label{j20.12}\\
B_n&=\Lambda_{n-1}/\Theta_{n-1}^{2},\label{j20.13}
\end{align}
for $n\geq 1$ then
\begin{align}\label{j20.14}
X_n=A_nX_{n-1}+B_n.
\end{align}

\begin{lemma}
The sequence
$ (\Theta_n,\Lambda_n)_{n\geq 0}$
is  i.i.d.  with elements distributed as $(Y_1,T_1)$. 
The sequence $(A_n,B_n)$ is i.i.d. and its elements are
distributed as $(A,B)$ in \eqref{eq:exampleDensity}.
\end{lemma}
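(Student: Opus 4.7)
The plan is to exploit the renewal structure of the Fleming--Viot construction at the branching times $T_n$ together with Brownian scaling. At each branching time $T_n$, by construction the surviving particle is at position $Y_n$ and the newly spawned particle is placed at the same point $Y_n$, so the configuration at time $T_n$ is $(Y_n, Y_n)$. By the definition \eqref{j18.1}, the evolution on $[T_n, T_{n+1})$ is obtained by running two fresh independent Brownian increments $W_1(t)-W_1(T_n)$ and $W_2(t)-W_2(T_n)$ from the common point $Y_n$ until the first time the minimum coordinate hits $0$. Hence $(T_{n+1}-T_n, Y_{n+1})$ is measurable with respect to the post-$T_n$ increments of $(W_1, W_2)$ and the random variable $Y_n$ only through its role as the starting height.

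Next, I would apply the strong Markov property of the two-dimensional Brownian motion $(W_1, W_2)$ at $T_n$ (which is a stopping time in the natural filtration, being built from $\tau_1,\tau_2$ applied inductively). This yields that the increments $(W_1(T_n+\cdot)-W_1(T_n), W_2(T_n+\cdot)-W_2(T_n))$ form a pair of independent standard Brownian motions independent of $\mathcal F_{T_n}$, and in particular independent of $Y_n$ and of $(\Theta_k, \Lambda_k)$ for $k<n$. Then I would use Brownian scaling: if $(\widetilde W_1, \widetilde W_2)$ are these post-$T_n$ increments and one sets $B_i(s) := Y_n^{-1}\widetilde W_i(Y_n^2 s)$, then conditionally on $Y_n$ the pair $(B_1, B_2)$ is a pair of independent standard Brownian motions. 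Rewriting the definition of $T_{n+1}$ and $Y_{n+1}$ in terms of $(B_1, B_2)$ starting from $1$ shows that
\[
\left(\frac{Y_{n+1}}{Y_n},\, \frac{T_{n+1}-T_n}{Y_n^2}\right) \stackrel{d}{=} (Y_1, T_1),
\]
and moreover this pair is independent of $\mathcal F_{T_n}$, hence independent of $(\Theta_k, \Lambda_k)_{k<n}$. Induction over $n$ then shows that $(\Theta_n, \Lambda_n)_{n\ge0}$ is i.i.d.\ with the desired marginal distribution.

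For the second statement, since $(A_n, B_n) = (\Theta_{n-1}^{-2}, \Lambda_{n-1}\Theta_{n-1}^{-2})$ is a fixed measurable transformation of $(\Theta_{n-1}, \Lambda_{n-1})$, the i.i.d.\ property of $(\Theta_n, \Lambda_n)$ transfers immediately to $(A_n, B_n)$. The marginal law of $(A_1, B_1) = (Y_1^{-2}, T_1 Y_1^{-2})$ is then computed from \eqref{j22.1} by the substitution $(y,t) \mapsto (a,b) = (y^{-2}, t y^{-2})$, with Jacobian $\tfrac12 a^{-3/2}$, producing exactly the density in \eqref{eq:exampleDensity}. The only step that requires genuine care is checking that $T_n$ really is a stopping time and that the restart at $(Y_n, Y_n)$ indeed re-produces the Fleming--Viot dynamics starting afresh; both points follow directly from the inductive definition of the $T_n$ and $Y_n$, but must be articulated cleanly to justify the application of the strong Markov property.
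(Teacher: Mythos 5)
Your proposal is correct and follows essentially the same route as the paper: apply the strong Markov property of $(W_1,W_2)$ at the stopping time $T_n$, use Brownian scaling to show $(\cY(T_n + tY_n^2)/Y_n)_{t\geq0}$ has the same law as $(\cY(t))_{t\geq0}$ and is independent of the pre-$T_n$ path, and then transfer the i.i.d.\ structure to $(A_n,B_n)$ by measurability. The paper states this compactly in two sentences; your write-up just supplies the same steps in more detail, including the change-of-variables for the density which the paper relegates to ``a standard calculation, left to the reader'' just above the lemma.
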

\begin{proof}
Recall the definition \eqref{j18.1}.
By the strong Markov property and the scaling property of Brownian motion, for every $k\geq 1$,
$$\left(\frac{\cY(T_k+tY_k^2)}{Y_k},t\geq 0\right)$$
has the same  distribution as $(\cY(t),t\geq 0)$ and is independent of $(\cY(t), t\in [0,T_k])$. Hence,
\begin{equation}
 (\Theta_n,\Lambda_n)_{n\geq 0}:=\left(\frac{Y_{n+1}}{Y_{n}},\frac{T_{n+1}-T_n}{Y_n^2}\right)_{n\geq 0} \label{eq:sequenceOfRatios}
\end{equation}
is an i.i.d. sequence with elements distributed as $(Y_1,T_1)$. 

The sequence $(A_n,B_n)_{n\geq 1}$ is i.i.d. because  $ (\Theta_n,\Lambda_n)_{n\geq 0}$
is  i.i.d.
Since $(\Theta_n,\Lambda_n) \stackrel{d} = (Y_1,T_1)$ for all $n$, it follows that
$(A_n,B_n)$ are
distributed as $(A,B)$ in \eqref{eq:exampleDensity}.
\end{proof}

\begin{lemma}\label{thm:FVSequenceConvergence}
We have
\begin{align}\label{eq:subgaussianSequenceTail}
&\lim_{t\to \infty}\frac{1}{t^2}\log \P\left(X_1^{-1/2} \geq t\right)=-1/4.
\end{align}
\end{lemma}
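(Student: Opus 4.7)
The plan is short because nearly everything has already been set up. First I would observe that with $X_0=0$, the recursion \eqref{j20.14} immediately gives $X_1 = A_1 X_0 + B_1 = B_1$, and $B_1 \stackrel{d}{=} B$, where $(A,B)$ has the density \eqref{eq:exampleDensity}. So the statement reduces to
\[
\lim_{t\to\infty}\frac{1}{t^2}\log \P(B^{-1/2}\geq t) = -\tfrac14.
\]

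Next I would invoke Proposition \ref{propo:inverseDependencyPotentialCalculation}, which identifies $g$ as the $(1,H_1)$-\ldm{} for $(A,B)$ and in particular gives
\[
g(0) = \tfrac12 - \tfrac{1}{0+2+\sqrt{4+0^2}} = \tfrac12 - \tfrac14 = \tfrac14.
\]
By Remark \ref{j27.1}, the existence of the \ldm{} at $y=0$ means precisely that $B$ is an $\IED^1_{H_1}(1/4)$-random variable, i.e.\ $-\log \P(B<\varepsilon)\sim H_1(\varepsilon)/4 = 1/(4\varepsilon)$ as $\varepsilon\to 0^+$.

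Finally, since $g(0)=1/4>0$, Lemma \ref{lemma:subgaussian} applies with $\lambda=1/4$ and yields
\[
\lim_{t\to\infty}\frac{1}{t^2}\log \P(X_1^{-1/2}\geq t) = \lim_{t\to\infty}\frac{1}{t^2}\log \P(B^{-1/2}\geq t) = -\tfrac14,
\]
which is the claim. There is really no obstacle here: all the nontrivial content (the left-tail decay rate of $B$, and the translation from left-tail decay of the form $\exp(-\lambda/\varepsilon)$ to Gaussian-type right-tail decay of $B^{-1/2}$) has already been absorbed into Proposition \ref{propo:inverseDependencyPotentialCalculation} and Lemma \ref{lemma:subgaussian}.
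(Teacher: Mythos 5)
Your proof is correct and follows essentially the same route as the paper: identify $X_1=B_1$, invoke Proposition \ref{propo:inverseDependencyPotentialCalculation} to get $g(0)=1/4$, pass through Remark \ref{j27.1} to see that $B$ is $\IED^1_{H_1}(1/4)$, and then apply Lemma \ref{lemma:subgaussian}. The only difference is cosmetic: you spell out the evaluation of the formula for $g$ at $0$, whereas the paper simply cites the proposition.
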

\begin{proof}
By Remark \ref{j27.1} random variable
$B_1$ is $\IED_{H_1}^1(\lambda_1)$, where $\lambda_1 = g(0)$. It follows from Proposition \ref{propo:inverseDependencyPotentialCalculation} that $g(0) = 1/4$ so $X_1=B_1$ is $\IED_{H_1}^1(1/4)$.  Lemma \ref{lemma:subgaussian}
now yields
\eqref{eq:subgaussianSequenceTail}.
\end{proof}

We will need the following version of the  results by Kesten \cite{Kesten} and Goldie \cite{GoldieAAP}, 
formulated in \cite[Theorem 2.4.4]{X=AX+B}.
\begin{thm}
\label{thm:KestenGoldieResult}Assume that $(A,B)$ satisfy the following conditions.
\begin{enumerate}[(i)]
 \item $A\geq 0$, a.s., and the law of $\log A$ conditioned on $\{A>0\}$ is nonarithmetic, i.e., it is not supported on 
$a\Z$ for any $a> 0$.
\item There exists $\alpha>0$ such that $\E[A^{\alpha}]=1$, $\E[|B|^{\alpha}]<\infty$ and $\E[A^{\alpha}\log^+A]<\infty$.
\item $\P(Ax+B=x)<1$ for every $x\in \R$.
\end{enumerate}
Then the equation $X\stackrel{d}{=}AX+B$ has a solution. There exist constants $c_+,c_-$
such that $c_++c_->0$ and 
\begin{align}\label{n24.3}
\P(X>x)\sim c_+x^{-\alpha}\quad\textrm{and}\quad \P(X<-x)\sim c_-x^{-\alpha},\qquad \text {  when  }x\to\infty.
\end{align}
The constants $c_+,c_-$ are given by
$$c_+=\frac{1}{\alpha m_{\alpha}}\E\left[(AX+B)_+^{\alpha}-(AX)_+^{\alpha}\right],\quad c_-=\frac{1}{\alpha m_{\alpha}}\E\left[(AX+B)_-^{\alpha}-(AX)_-^{\alpha}\right],$$
where $m_{\alpha}=\E[A^{\alpha}\log A]$.
\end{thm}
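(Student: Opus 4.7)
My plan is to follow Goldie's \emph{implicit renewal theorem} approach, which is the standard route to the Kesten--Goldie tail asymptotics. The proof naturally splits into three parts: existence of a solution, derivation of a renewal equation, and identification of the constants.

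\textbf{Existence.} First I would verify that the hypotheses force $\E[\log A]<0$: the moment generating function $\psi(s)=\log \E[A^s]$ is convex and satisfies $\psi(0)=\psi(\alpha)=0$; nonarithmeticity of $\log A$ prevents $A$ from being constant, so $\psi$ is strictly convex and thus $\psi'(0)<0$. Combined with $\E[|B|^\alpha]<\infty$ (which implies $\E[\log^+|B|]<\infty$) and the nondegeneracy condition (iii), Theorem \ref{m8.1} and Corollary \ref{thm:uniquenessAndConvergence} give a unique solution $X\stackrel{d}{=}\sum_{k\geq1} B_k\prod_{j<k}A_j$.

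\textbf{Renewal equation.} Set $r(t)=e^{\alpha t}\P(X>e^t)$ and use $X\stackrel{d}{=}AX+B$ with $X$ independent of $(A,B)$ to write
\[
\P(X>e^t) = \int_{-\infty}^\infty \P(X>e^{t-s})\,\P(\log A\in ds) + g_+(t),
\]
where
\[
g_+(t) = \P(AX+B>e^t) - \P(AX>e^t).
\]
Multiplying by $e^{\alpha t}$ and defining the tilted probability measure $dF(s)=e^{\alpha s}\,d\P(\log A\leq s)$ (which is a probability measure because $\E[A^\alpha]=1$) turns this into the renewal equation
\[
r(t) = \int_{-\infty}^\infty r(t-s)\,dF(s) + e^{\alpha t} g_+(t).
\]
Under $F$, $\log A$ has mean $m_\alpha=\E[A^\alpha\log A]=\psi'(\alpha)>0$, and by hypothesis (i), $F$ is nonarithmetic. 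The crucial technical step is to show that $t\mapsto e^{\alpha t} g_+(t)$ is directly Riemann integrable on $\R$; this requires a bound of the form $e^{\alpha t}|g_+(t)|\leq C(1+|t|)^{-1-\delta}$ or similar, obtained by combining the moment condition $\E[|B|^\alpha]<\infty$ with $\E[A^\alpha\log^+A]<\infty$ and a truncation argument splitting $\{AX>\tfrac12 e^t\}$ from $\{AX\leq \tfrac12 e^t\}$. This directly Riemann integrability verification is the main analytic obstacle.

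\textbf{Identification of constants and positivity.} Once direct Riemann integrability holds, the Key Renewal Theorem (Blackwell) gives
\[
c_+ := \lim_{t\to\infty} r(t) = \frac{1}{m_\alpha}\int_{-\infty}^\infty e^{\alpha t} g_+(t)\,dt.
\]
To convert the right-hand side into the stated formula, I apply the layer-cake identity $\E[Y_+^\alpha]=\alpha\int_0^\infty u^{\alpha-1}\P(Y>u)\,du$ to $Y=AX+B$ and $Y=AX$, after the change of variables $u=e^t$; this yields
\[
c_+ = \frac{1}{\alpha m_\alpha}\E\bigl[(AX+B)_+^\alpha - (AX)_+^\alpha\bigr].
\]
The analysis of the left tail is identical with $g_-(t)=\P(AX+B<-e^t)-\P(AX<-e^t)$, giving the analogous formula for $c_-$. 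Finally, for $c_++c_->0$: if both vanished, then $r(t)\to 0$ and $\ell(t):=e^{\alpha t}\P(|X|>e^t)\to 0$, contradicting either the Choquet--Deny-type alternative in Goldie's framework or (more concretely) the fixed point equation together with $\E[A^\alpha]=1$, which forces a specific nonzero balance between the two tails. Writing this contradiction out cleanly, following Goldie \cite{GoldieAAP}, completes the argument.
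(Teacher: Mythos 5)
The paper does not prove this theorem; it is quoted verbatim as a known result, attributed to Kesten \cite{Kesten} and Goldie \cite{GoldieAAP} in the form given in \cite[Theorem 2.4.4]{X=AX+B}. So there is no internal proof to compare against, and your task here was really to reconstruct a proof from the literature.

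Your outline correctly identifies Goldie's implicit renewal theorem as the right framework, and the general architecture (existence via $\psi'(0)<0$ from strict convexity of $\psi(s)=\log\E[A^s]$, the exponential tilt $dF(s)=e^{\alpha s}\,d\P(\log A\leq s)$, the renewal equation for $r(t)=e^{\alpha t}\P(X>e^t)$, and the layer-cake identity to express the limit constant) is sound. Two small cautions on the parts you did write out: (1) if $\P(A=0)>0$ then $\psi(0)=\log\P(A>0)<0$, so the ``$\psi(0)=\psi(\alpha)=0$'' argument needs a caveat (existence is then handled by the $\P(A=0)>0$ branch of Corollary \ref{thm:uniquenessAndConvergence} instead); (2) in the layer-cake step, $\E[(AX+B)_+^\alpha]$ and $\E[(AX)_+^\alpha]$ are both \emph{infinite} under the conclusion $\P(X>x)\sim c_+x^{-\alpha}$, so the manipulation must be done on the single expectation $\E[(AX+B)_+^\alpha-(AX)_+^\alpha]$ of the difference, justified by a truncation argument, not by splitting the integral.

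The real gaps are the two steps you explicitly defer. The directly-Riemann-integrable verification is not a routine tail bound: Goldie does not show $e^{\alpha t}g_+(t)$ is dRi directly, but instead proves a smoothed version of the renewal theorem (his Lemma 9.2, convolving with a uniform density) precisely because $g_+$ itself need not be dRi, and the hypotheses $\E[|B|^\alpha]<\infty$ and $\E[A^\alpha\log^+A]<\infty$ enter through a separate moment lemma controlling $\int_0^\infty|\P(AX+B>t)-\P(AX>t)|\,t^{\alpha-1}\,dt$. And the positivity $c_++c_->0$ is a genuine theorem in its own right (Goldie's Theorem 4.1 for the case $A\geq0$), relying on nondegeneracy (iii) to rule out a bounded solution; the ``nonzero balance'' heuristic you give would need to be replaced by that argument. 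As a blind reconstruction of a cited classical theorem your sketch is a faithful roadmap, but the two hard steps are named rather than proved, and they are where essentially all of the work of Goldie's paper lives.
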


\begin{corol}\label{corol:LinearBound}
 There exists $c_1>0$ such that for all $x\geq 0$,
\begin{equation}
 \P\left(\frac{Y_n}{\sqrt{T_n}}\leq x\right)\leq c_1 x.
\end{equation}
\end{corol}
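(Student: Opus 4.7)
The plan is to recast the statement in terms of $X_n = T_n/Y_n^2$. By \eqref{j20.10}--\eqref{j20.14} the sequence $(X_n)$ satisfies the SFPE iteration $X_n = A_n X_{n-1}+B_n$ with $X_0=0$ and $(A,B)$ distributed as in \eqref{eq:exampleDensity}. Since $Y_n/\sqrt{T_n}\leq x$ is the same as $X_n\geq x^{-2}$, the corollary is equivalent to the tail bound $\P(X_n\geq t)\leq C t^{-1/2}$, uniformly in $n$. Lemma \ref{pro:p1} applied with $X_0=0$ (so $X_1 = B_1\geq_{st}0=X_0$) yields $X_n\leq_{st} X$ for every $n$, where $X$ is the stationary solution to \eqref{y20.1}. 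Thus it suffices to prove $\P(X>t)\leq C t^{-1/2}$ for all $t>0$. I would obtain this from the Kesten--Goldie theorem (Theorem \ref{thm:KestenGoldieResult}) applied with $\alpha=1/2$.

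The central step is the balance condition $\E[A^{1/2}]=\E[Y_1^{-1}]=1$. Integrating the joint density in Lemma \ref{propo:jointDistribution} over $t$ via the elementary identity $\int_0^\infty t^{-2}e^{-a/t}\,dt=1/a$ produces the marginal density of $Y_1$; using the algebraic simplification
\[
\frac{1}{(1-y)^2+1}-\frac{1}{(1+y)^2+1}=\frac{4y}{y^4+4},
\]
one finds that this density equals $\tfrac{8}{\pi}\cdot y/(y^4+4)$ on $(0,\infty)$. Hence
\[
\E[Y_1^{-1}]=\frac{8}{\pi}\int_0^\infty\frac{dy}{y^4+4}=\frac{8}{\pi}\cdot\frac{\pi}{8}=1,
\]
as needed. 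This exact identity is the main technical obstacle; the rest of the Kesten--Goldie hypotheses are routine integrability verifications.

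The remaining checks proceed as follows. The law of $\log A=-2\log Y_1$ is continuous and therefore nonarithmetic; $(A,B)$ has a density, so $\P(Ax+B=x)<1$ for every $x$; the moment $\E[A^{1/2}\log^+A]=2\E[Y_1^{-1}\log^+Y_1^{-1}]$ is finite because the density of $Y_1$ is linear near $0$, absorbing the $y^{-1}\log(1/y)$ singularity; and $\E[B^{1/2}]<\infty$ can be verified directly by integrating the joint density in $t$ via $\int_0^\infty t^{-3/2}e^{-a/t}\,dt=\sqrt{\pi/a}$ and checking that the resulting $y$-integrand is bounded near $0$ and decays like $y^{-4}$ at infinity. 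Existence and uniqueness of the stationary $X$ then follow from Corollary \ref{thm:uniquenessAndConvergence}, since $\E[\log A]<0$ by strict Jensen applied to the non-degenerate identity $\E[A^{1/2}]=1$, and $\E[\log^+B]\leq \E[B^{1/2}]<\infty$.

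With all hypotheses verified, Theorem \ref{thm:KestenGoldieResult} gives $\P(X>t)\sim c_+ t^{-1/2}$ as $t\to\infty$ for some $c_+>0$, so there exists $C>0$ with $\P(X>t)\leq C t^{-1/2}$ for every $t>0$ (enlarging $C$ so that the bound exceeds $1$ for small $t$). Combining with the stochastic domination $X_n\leq_{st}X$,
\[
\P\!\left(\frac{Y_n}{\sqrt{T_n}}\leq x\right)=\P(X_n\geq x^{-2})\leq \P(X\geq x^{-2})\leq C x,
\]
which proves the corollary with $c_1=C$.
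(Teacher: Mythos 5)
Your proposal is correct and takes essentially the same route as the paper: stochastic domination $X_n\leq_{st}X$ from Lemma \ref{pro:p1}, then the Kesten--Goldie tail asymptotic (Theorem \ref{thm:KestenGoldieResult}) with $\alpha=1/2$, hinging on the balance condition $\E[A^{1/2}]=1$. The only cosmetic difference is that the paper imports the marginal density of $A$ and the tail of $B$ from an earlier reference, whereas you derive the density of $Y_1$ directly from Lemma \ref{propo:jointDistribution} and compute $\E[Y_1^{-1}]=1$ by hand; your calculation agrees with the paper's cited formula (indeed, the change of variables $a=y^{-2}$ transforms your $\frac{8y}{\pi(y^4+4)}$ into the paper's $\frac{4}{\pi(4a^2+1)}$).
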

\begin{proof}
Recall \eqref{j20.10}-\eqref{j20.14}. 
Suppose that $X$ is the solution to \eqref{y20.1}. 
By Lemma \ref{pro:p1},  
\begin{align}\label{j21.1}
\P(X_n\geq x) \leq \P(X\geq x).
\end{align}

We will now verify the assumptions of Theorem \ref{thm:KestenGoldieResult}.
Assumptions (i) and (iii) clearly hold in view of \eqref{eq:exampleDensity}. 
We will show that assumption (ii) holds for $\alpha = 1/2$.

It has been proved in \cite[Prop. 8.1]{IEDPaper} that
\begin{align*}
\P(A\in da)& = \frac 4 {\pi (4 a^2 +1)} da,\qquad a>0,\\
\P(B>x) &\sim \frac 1 {\pi x} \qquad \text {  as  } x\to \infty.
\end{align*}
These formulas imply that
\begin{align*}
\E[A^{1/2}]
&=\int_0^ \infty a^{1/2}\frac 4 {\pi (4 a^2 +1)} da
=1,\\
\E[A^{1/2}\log^+A]
&=\int_0^ \infty a^{1/2} (\log ^+ a)\frac 4 {\pi (4 a^2 +1)} da
<\infty,\\
\E[|B|^{1/2}]&<\infty.
\end{align*}
The assumptions of  Theorem \ref{thm:KestenGoldieResult} are verified so we obtain
$$\P(X\geq x)\sim c_+x^{-1/2},$$
as $x\to \infty$.
This and \eqref{j21.1} give
$$\P\left(\frac{Y_n}{\sqrt{T_n}}\leq x^{-1/2}\right)
=\P\left(X_n^{-1/2}\leq x^{-1/2}\right)\leq \P\left(X^{-1/2}\leq x^{-1/2}\right)\sim c_+x^{-1/2}.$$
This implies the lemma.
\end{proof}

\begin{proof}[Proof of Theorem \ref{thm:LawOfTheIteratedLogartihm}.]
We can apply Theorem \ref{thm:LowerEnvelope} and Proposition \ref{propo:phiCalculation} to see that, a.s.,
$$\liminf_{n\to\infty}(\log n)\frac{T_n}{Y_n^2}
=\liminf_{n\to\infty}(\log n) X_n=\lambda^\ast=\frac{1}{2} .$$
Hence, 
\begin{equation}\label{eq:noIteratedLog}
\limsup_{n\to\infty}\frac{Y_n}{\sqrt{2 T_n\log n}}=1. 
\end{equation}

We will show that $\frac{\log\log T_n}{\log n}\to 1$ a.s. It follows from \eqref{eq:sequenceOfRatios} that
$Y_n=\prod_{j=1}^{n-1}\Theta_j$. It is standard to show that $\mu := \E[\log Y_1] \in (0,\infty)$ using \eqref{j22.1}.
Thus, by the Law of Large Numbers,  a.s.,
\begin{align}\label{j22.3}
\lim_{n\to \infty}\frac{\log Y_n}{n}=\E[\log Y_1]=\mu.
\end{align}
Consider any $\eps>0$.
By  Lemmas \ref{pro:p1} and \ref{thm:FVSequenceConvergence},  for large $n$,
\begin{align}\label{j22.2}
\P\left(\frac{T_n}{Y_n^2}\leq e^{-n\varepsilon}\right)
= \P\left(X_n\leq e^{-n\varepsilon}\right)
\leq \P(X_1\leq e^{-n\varepsilon/2})<\exp\left(-(1/8) e^{n\varepsilon}\right).
\end{align}
By Corollary \ref{corol:LinearBound},
$$\P\left(\frac{Y_n^2}{T_n}\leq e^{-n\varepsilon} \right)\leq c_1 e^{-n\varepsilon/2}.$$
This and \eqref{j22.2} imply that
$$\sum_{n=0}^{\infty}\P\left(\left|\frac{\log T_n -2\log Y_n}{n}\right|>\varepsilon\right)=\sum_{n=0}^{\infty}\left[\P\left(\frac{Y_n^2}{T_n}\leq e^{-n\varepsilon}\right)+\P\left(\frac{T_n}{Y_n^2}\leq e^{-n\varepsilon}\right)\right]<\infty.$$
By the Borel-Cantelli Lemma, only a finite number of events
$\left\{\left|\frac{\log T_n -2\log Y_n}{n}\right|>\varepsilon\right\}$ occur, a.s. Since this holds for every rational $\eps>0$, 
 we have $\frac{\log T_n -2\log Y_n}{n}\to 0$ a.s. We combine this observation with \eqref{j22.3} to obtain
$$\lim_{n\to\infty}\frac{\log T_n}{n}= 2\mu, \quad \text{ a.s.}$$
This implies that
$$\lim_{n\to\infty}\frac{\log\log T_n}{\log n}=1, \quad \text{ a.s.}$$
It follows from this and \eqref{eq:noIteratedLog} that, a.s.,
\begin{equation*}
\limsup_{n\to\infty}\frac{Y_n}{\sqrt{2 T_n \log\log T_n}}=1,
\end{equation*}
so the proof is complete.
\end{proof}

\section*{Acknowledgments}\rm

Research of the first author was supported in part by Simons Foundation Grant 506732. 

The third author would like to thank the Department of Mathematics at the University of Washington in Seattle, where the project took place, for the hospitality.
The third author is also grateful to the Microsoft Corporation for allowance on Azure cloud service where he ran many simulations.

\appendix

\section{}

This section is a part of the proof of Theorem \ref{thm:LowerEnvelope}. Because of the specialized nature of this material we relegated it to an appendix.

\begin{lemma}\label{propo:main:kn}
Assume that  $\tilde{\eps}, \lambda^\ast, \eps>0$  and $y_\ast>1$.
Suppose that $H$ is regularly varying at $0$ with index $-\rho<0$ and $H^{-1}$ is one of its asymptotic inverses.
There exists a strictly increasing sequence $(k_n)$ of integers such that for each $n\geq 1$,
\begin{align}\label{m14.3}
H^{-1}\left( \frac{\log k_{n+1}}{\lambda^\ast(1+\eps)} \right)y_\ast^{k_{n+1}-k_n-1}&<\tilde{\eps}, \\
H^{-1}\left( \frac{\log k_{n+1}}{\lambda^\ast(1+\eps)} \right)y_\ast^{k_{n+1}-k_n} &\geq c,\label{m14.4}
\end{align}
where $c\in(0,\tilde{\eps} y_\ast)$. 

Moreover,  for any $\gamma\in(0,1)$ there exists $K>0$ such that 
for all $n\geq 1$,
\begin{align}\label{m14.5}
k_n^\gamma\leq K n.
\end{align}
\end{lemma}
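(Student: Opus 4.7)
The plan is to build $(k_n)$ by induction, choosing the specific value $c=\tilde{\eps}/2\in(0,\tilde{\eps}y_\ast)$. Writing $C=\lambda^\ast(1+\eps)$ and, for $k>k_n$, setting
\[
F_n(k):=H^{-1}\!\left(\frac{\log k}{C}\right)y_\ast^{k-k_n},
\]
the two required inequalities become $F_n(k_{n+1})<\tilde{\eps}y_\ast$ and $F_n(k_{n+1})\geq c$. By Remark \ref{remark:monotoneEquivalents} I may assume $H^{-1}$ is continuous and monotone at $\infty$. For large $k$, $H^{-1}(\log k/C)$ decays at most polylogarithmically in $k$, so $F_n(k)\to\infty$ as $k\to\infty$; meanwhile $F_n(k_n+1)=y_\ast H^{-1}(\log(k_n+1)/C)\to 0$ as $k_n\to\infty$.

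I would then pick $k_1$ large enough so that $y_\ast H^{-1}(\log(k+1)/C)<c$ for every $k\geq k_1$, and inductively define $k_{n+1}$ to be the smallest integer strictly greater than $k_n$ with $F_n(k_{n+1})\geq c$. The choice of $k_1$ ensures $k_{n+1}\geq k_n+2$, so by minimality $F_n(k_{n+1}-1)<c$. Using $H^{-1}(\log k_{n+1}/C)\leq H^{-1}(\log(k_{n+1}-1)/C)$,
\[
F_n(k_{n+1})\leq y_\ast F_n(k_{n+1}-1)<y_\ast c=y_\ast\tilde{\eps}/2<y_\ast\tilde{\eps},
\]
which yields \eqref{m14.3} and \eqref{m14.4} simultaneously.

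For the growth estimate \eqref{m14.5}, I would rearrange $F_n(k_{n+1})<y_\ast c$ to obtain
\[
k_{n+1}-k_n\leq 1+\log_{y_\ast}\!\left(\frac{c}{H^{-1}(\log k_{n+1}/C)}\right).
\]
Potter's bounds applied to $H^{-1}$, which is regularly varying at $\infty$ with index $-1/\rho$, give $H^{-1}(x)\geq x^{-1/\rho-\delta}$ for large $x$ and any small $\delta>0$, hence $-\log H^{-1}(\log k/C)\leq (1/\rho+\delta)\log\log k+O(1)$. Thus $k_{n+1}-k_n\leq D_1\log\log k_{n+1}+D_2$ for suitable constants, and summing yields $k_n\leq k_1+(n-1)(D_1\log\log k_n+D_2)$. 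A standard bootstrap (verify an ansatz $k_n\leq An\log\log(n+M)$) turns this into $k_n=O(n\log\log n)$; consequently $k_n^\gamma=o(n)$ for any $\gamma<1$, so $k_n^\gamma\leq Kn$ for some $K>0$, after absorbing the finitely many small-$n$ values into $K$.

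The main obstacle I anticipate is the self-referential inequality $k_{n+1}\leq k_n+D_1\log\log k_{n+1}+D_2$ coming from the upper bound on $F_n(k_{n+1})$: extracting the clean explicit estimate $k_n=O(n\log\log n)$ requires a careful induction against an ansatz rather than a direct summation. Everything else is a routine application of the regular variation of $H^{-1}$ together with the monotonicity provided by passage to a continuous monotone asymptotic equivalent.
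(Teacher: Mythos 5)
Your proof is correct, and it takes a genuinely different route from the paper's. The paper constructs the sequence via a continuous recursion $a_{n+1}=a_n+f(a_n)$, where $f(x)\approx\log_{y_\ast}\bigl(\tilde{\eps}/H^{-1}(\log x/C)\bigr)$ is the increment needed to bring $H^{-1}(\log k_{n+1}/C)\,y_\ast^{k_{n+1}-k_n}$ back to size $\tilde{\eps}$; it then proves $a_n^\gamma\leq Kn$ by a careful direct induction with an explicitly tuned constant $K$, and finally rounds to $k_n=\lceil a_n\rceil$, checking that the rounding does not break the two inequalities. You instead build the integer sequence directly by a greedy rule (take $k_{n+1}$ to be the first integer above $k_n$ at which $F_n$ crosses $c$), which makes the lower bound $F_n(k_{n+1})\geq c$ automatic and the upper bound $F_n(k_{n+1})<y_\ast c<\tilde{\eps}y_\ast$ follow from minimality and monotonicity of $H^{-1}$. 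This cleanly sidesteps the rounding step. For the growth estimate you rely on a bootstrap from the self-referential inequality $k_{n+1}-k_n\leq D_1\log\log k_{n+1}+D_2$; that does work, but note the hidden step that justifies replacing $\log\log k_{n+1}$ by $\log\log k_n+O(1)$: one should first observe (from the same inequality and $k_n\to\infty$) that $k_{n+1}\leq 2k_n$ for large $n$, after which the summation and the ansatz $k_n=O(n\log\log n)$ close without circularity, giving $k_n^\gamma=o(n)$ for $\gamma<1$. You correctly flag this self-reference as the main technical point; the paper's direct induction against $a_n^\gamma\leq Kn$ avoids the bootstrap entirely at the cost of a more delicate choice of $K$, while yours yields the sharper growth rate $O(n\log\log n)$.
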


Recall that  $H^{-1}$ is regularly varying at infinity with index $-1/\rho$. 
Let  $f(x)$ be defined for $x>1$ by 
\[
f(x) = \log \tilde{\eps} - \frac{1}{\log y_\ast} \log H^{-1}\left( \frac{\log x}{\lambda^\ast(1+\eps)}\right),
\]
so that for any $k_n,k_{n+1}>1$, 
\begin{align}\label{eq1}
H^{-1}\left( \frac{\log k_{n+1}}{\lambda^\ast(1+\eps)} \right)y_\ast^{k_{n+1}-k_n}  = \tilde{\eps} y_\ast^{k_{n+1}-k_n-f(k_{n+1})}.
\end{align}

\begin{lemma}\label{m12.1}
	For any $\delta>0$, there exist $C_1,C_2\in\R$ such that
	\[
	C_1+\frac{1/\rho-\delta}{\log y_\ast}\log\log x\leq f(x)\leq C_2+\frac{1/\rho+\delta}{\log y_\ast}\log\log x
	\]
	for sufficiently large $x$.
\end{lemma}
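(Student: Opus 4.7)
The strategy is a direct application of Potter bounds to the regularly varying function $H^{-1}$. Since $H^{-1}$ is regularly varying at infinity with index $-1/\rho$, Potter's theorem (\cite[Theorem 1.5.6]{regularVariation}) says that for any prescribed $\delta'>0$ there exist $t_0>0$ and constants $0<D_1\leq D_2<\infty$ such that
\[
D_1\, t^{-1/\rho-\delta'} \;\leq\; H^{-1}(t)\;\leq\; D_2\, t^{-1/\rho+\delta'}
\qquad \text{for all } t\geq t_0.
\]

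The plan is to set $t(x) = \log x / [\lambda^\ast(1+\eps)]$, which tends to $\infty$ as $x\to\infty$, so the Potter inequalities above apply once $x$ is large enough. Taking logarithms produces
\[
\log D_1 + \left(-\tfrac{1}{\rho}-\delta'\right)\log t(x)
\;\leq\; \log H^{-1}(t(x))\;\leq\;
\log D_2 + \left(-\tfrac{1}{\rho}+\delta'\right)\log t(x).
\]
Then I would expand $\log t(x)=\log\log x - \log[\lambda^\ast(1+\eps)]$ and absorb the bounded pieces $\log D_i$ and $(-1/\rho\pm\delta')\log[\lambda^\ast(1+\eps)]$ into new constants $E_1,E_2$, yielding
\[
E_1 + \left(-\tfrac{1}{\rho}-\delta'\right)\log\log x
\;\leq\; \log H^{-1}(t(x))\;\leq\;
E_2 + \left(-\tfrac{1}{\rho}+\delta'\right)\log\log x
\]
for $x$ sufficiently large.

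Finally I would multiply through by $-1/\log y_\ast<0$ (flipping the inequalities) and add $\log\tilde\eps$, which by the definition of $f$ yields exactly the asserted bounds
\[
C_1 + \frac{1/\rho - \delta'}{\log y_\ast}\log\log x
\;\leq\; f(x)\;\leq\;
C_2 + \frac{1/\rho + \delta'}{\log y_\ast}\log\log x,
\]
with $C_1 = \log\tilde\eps - E_2/\log y_\ast$ and $C_2 = \log\tilde\eps - E_1/\log y_\ast$. Identifying $\delta'$ with the given $\delta$ completes the argument. There is no genuine obstacle here; the proof is just bookkeeping around Potter's bounds, with the only subtle point being that multiplying by the negative factor $-1/\log y_\ast$ swaps the roles of the upper and lower bounds.
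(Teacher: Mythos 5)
Your proof is correct and takes essentially the same approach as the paper: the paper obtains the bounds $x^{-1/\rho-\delta}\leq H^{-1}(x)\leq x^{-1/\rho+\delta}$ for large $x$ directly from the regular variation of $H^{-1}$ (via the limits $x^{1/\rho\pm\delta}H^{-1}(x)\to\infty$ or $0$), while you invoke Potter's theorem which yields the same inequalities up to multiplicative constants that you then absorb. The rest of the bookkeeping — substituting $t(x)$, taking logs, and flipping inequalities when multiplying by $-1/\log y_\ast$ — matches the paper's terse ``the assertion follows by using the above inequalities in the definition of $f$.''
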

\begin{proof}
	By regular variation of $H^{-1}$, we have for any $\delta>0$,
	\begin{align*}
	\lim_{x\to\infty}x^{1/\rho+\delta} &H^{-1}(x)  = \infty, \\	\lim_{x\to\infty}x^{1/\rho-\delta} &H^{-1}(x) = 0.
	\end{align*}
Thus, there exists $x_1>0$ such that 
\begin{align*}
x^{-1/\rho-\delta}\leq H^{-1}(x)\leq x^{-1/\rho+\delta}
\end{align*}
for $x>x_1$.  The assertion follows by using above inequalities in the definition of $f$.
\end{proof}

Suppose that $a_0>0$ is such that $f(a_0)\geq 1$ and let
\[
a_{n+1}=a_n+f(a_n),\quad n\geq 0.
\]

\begin{lemma}\label{m14.2}
The following claims hold for the sequence $(a_n)$.
    \begin{enumerate}[(i)]
      \item $a_{n+1}\geq a_n+1$ for $n\geq 0$.
      \item For any $\gamma\in(0,1)$, there exists $K>0$ such that 
      \[
      a_n^\gamma\leq K n,\quad n\geq 1.
      \]
      \item $a_{n+1}/a_n\to 1$ as $n\to\infty$.
    \end{enumerate}
\end{lemma}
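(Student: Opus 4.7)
The plan is to use Lemma \ref{m12.1}, which captures that $f(x)$ grows at the rate of a positive constant times $\log\log x$, together with elementary iteration. By Remark \ref{remark:monotoneEquivalents} we may take $H^{-1}$ to be continuous and nonincreasing on a neighborhood of $\infty$; the formula defining $f$ then shows that $f$ is nondecreasing on $[a_0,\infty)$, and by enlarging $a_0$ if necessary (still compatible with the standing hypothesis $f(a_0)\geq 1$) I would assume this holds from $a_0$ onward.

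Part (i) follows immediately by a one-step induction: if $a_n\geq a_0$, then $f(a_n)\geq f(a_0)\geq 1$ by monotonicity, so $a_{n+1}=a_n+f(a_n)\geq a_n+1\geq a_0$. Part (iii) is then a direct consequence: (i) gives $a_n\to\infty$, while the upper bound in Lemma \ref{m12.1} yields $f(a_n)/a_n = O\bigl((\log\log a_n)/a_n\bigr)\to 0$, so $a_{n+1}/a_n = 1 + f(a_n)/a_n \to 1$.

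The main obstacle is (ii), which requires a genuine bootstrap. Given $\gamma\in(0,1)$, I would fix $\epsilon>0$ with $\gamma(1+\epsilon)\leq 1$ and reduce the claim to a polynomial upper bound $a_n \leq M n^{1+\epsilon}$ for all $n\geq 1$ and some $M=M(\epsilon)$; this immediately yields $a_n^\gamma \leq M^\gamma n^{\gamma(1+\epsilon)} \leq M^\gamma n$, so that $K=M^\gamma$ works. The polynomial bound would be established by induction starting from a sufficiently large index $n_1$. The mean value theorem gives
\[
(n+1)^{1+\epsilon}-n^{1+\epsilon}\;\geq\; \tfrac12 (1+\epsilon)\,n^\epsilon
\qquad\text{for all $n\geq n_1$,}
\]
while the upper bound $f(x) \leq C_4 + C_5 \log\log x$ from Lemma \ref{m12.1} (with any admissible $\delta>0$) gives, whenever $a_n\leq n^{1+\epsilon}$ and $n$ is large,
\[
f(a_n) \;\leq\; C_4 + C_5 \log\log(n^{1+\epsilon}) \;\leq\; \tfrac12 (1+\epsilon)\,n^\epsilon,
\]
using $\log\log n = o(n^\epsilon)$. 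Combining the two displays shows $a_{n+1}\leq (n+1)^{1+\epsilon}$, so the inductive step closes. The base case $a_{n_1}\leq n_1^{1+\epsilon}$ is trivially satisfied by choosing $n_1$ large enough, and absorbing the finitely many initial terms $a_1,\dots,a_{n_1}$ into the constant $M$ promotes the bound to every $n\geq 1$. The only delicate point is matching the two estimates at the inductive step, but this is automatic because $(\log\log n)/n^\epsilon \to 0$, independent of the constants $C_4, C_5$ coming from Lemma \ref{m12.1}.
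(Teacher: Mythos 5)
Your parts (i) and (iii) are correct and match the paper's argument essentially line for line; your observation that the monotonicity of $f$ comes from $H^{-1}$ being nonincreasing is a helpful detail the paper leaves implicit. Part (ii), however, has a genuine gap.

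The paper runs the induction on the statement $a_n^\gamma \leq K n$ directly from $n=1$, with the multiplicative constant $K$ living \emph{inside} the induction hypothesis; this lets the base case $a_1^\gamma \leq K$ be satisfied trivially by choosing $K$ large, and the inductive step is then closed by convexity of $x\mapsto x^{1/\gamma}$ after a careful choice of $K$ in \eqref{m12.2}. You instead strip out the constant and try to show $a_n \leq n^{1+\epsilon}$ for $n \geq n_1$, planning to absorb $a_1,\dots,a_{n_1}$ into $M$ afterwards. Your inductive step is fine: the mean-value bound and $\log\log = o(n^\epsilon)$ do close it for $n$ large. But the base case is where the argument breaks. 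You assert that ``$a_{n_1}\leq n_1^{1+\epsilon}$ is trivially satisfied by choosing $n_1$ large enough,'' and this is not trivial — it is exactly the nontrivial content of the claim. A priori one only knows $a_n \geq a_0 + n$ from (i); there is no obvious upper bound, and if $a_0$ is large one even has $a_n > n^{1+\epsilon}$ for all $n$ up to roughly $a_0^{1/(1+\epsilon)}$. To know that the curve $n^{1+\epsilon}$ eventually overtakes $a_n$ (which is what you need for the base case to exist at some $n_1$ large enough for the other estimates to kick in), you need an independent upper bound such as $a_n = O(n\log\log n)$. That in turn requires its own bootstrap: e.g.\ first use $f(a_n)\leq 2 a_n^{1/2}$ for large $n$ to deduce $a_n^{1/2}$ increases by at most $1$ per step and hence $a_n = O(n^2)$, then feed that back through Lemma \ref{m12.1} to get $f(a_n) = O(\log\log n)$ and hence $a_n = O(n\log\log n)$. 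None of this is supplied, so as written the induction has no verified base case. The paper's formulation avoids this issue precisely because the scaling constant $K$ is part of the inductive statement; your reformulation is not wrong in spirit, but it moves the difficulty into the base case and then waves it away.
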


\begin{proof}
(i) Since $f$ is nondecreasing, we have
\[
a_{n+1}-a_n=f(a_n)\geq f(a_0)\geq 1.
\]
(ii) We use induction and Lemma \ref{m12.1}. Fix $\gamma\in(0,1)$ and $\delta>0$. Let $C_2$ be as in Lemma \ref{m12.1}. Suppose that  $K$ is so large that, 
\begin{align}\label{m12.2}
C_2+\frac{1/\rho+\delta}{\gamma \log y_\ast} \log K +\frac{1/\rho+\delta}{\log y_\ast} \frac{1}{1-\gamma}\leq \frac{K^{1/\gamma}}{\gamma}.
\end{align}
Make $K$ larger if necessary so that  $a_1^\gamma\leq K $.

For the induction step, assume that $a_n^\gamma\leq K n$ for some $n$.
Note that $\log n \leq \frac{1}{1/\gamma-1}n^{1/\gamma-1}$.
 We use this inequality and \eqref{m12.2} to see that,
\begin{align*}
a_{n+1}& =a_n+f(a_n)\leq  (K n)^{1/\gamma}+f((K n)^{1/\gamma}) \\
& \leq (K n)^{1/\gamma}+ C_2+\frac{1/\rho+\delta}{\log y_\ast}\log\log (K n)^{1/\gamma}  \\
& \leq (K n)^{1/\gamma}+ C_2+\frac{1/\rho+\delta}{\log y_\ast}\log (K n)^{1/\gamma}  \\
&=  (K n)^{1/\gamma}+\left[C_2+\frac{1/\rho+\delta}{\gamma \log y_\ast} \log K \right]+\frac{1/\rho+\delta}{\gamma\log y_\ast} \log n \\
&\leq   (K n)^{1/\gamma}+\left[C_2+\frac{1/\rho+\delta}{\gamma \log y_\ast} \log K \right]n^{1/\gamma-1}+\frac{1/\rho+\delta}{\gamma\log y_\ast} \frac{1}{1/\gamma-1}n^{1/\gamma-1}\\
& \leq (K n)^{1/\gamma}+\frac{K^{1/\gamma}}{\gamma}n^{1/\gamma-1}
 \leq  (K (n+1))^{1/\gamma},
\end{align*}
where the last inequality follows by convexity of $x\mapsto x^{1/\gamma}$.
Part (ii) follows by induction.

(iii) By the definition of $(a_n)$ and Lemma \ref{m12.1} we have
\[
\frac{a_{n+1}}{a_n} = 1+\frac{f(a_n)}{a_n}\to 1.
\]
\end{proof}

\begin{proof}[Proof of Lemma \ref{propo:main:kn}.]
Let
\[
k_n = \lceil a_n \rceil.
\]
 Since $a_n\leq k_n< a_n+1\leq a_{n+1}\leq k_{n+1}<a_{n+1}+1$, we have
\begin{align}\label{m14.1}
y_\ast = y_{\ast}^{(a_{n+1}+1)-a_n-f(a_n)}  
>
 y_\ast^{k_{n+1}-k_n-f(k_{n+1})}  
> y_{\ast}^{a_{n+1}-(a_n+1)-f(k_{n+1})}  =  y_\ast^{-1+f(a_n)-f(k_{n+1})}.
\end{align}
By Lemma \ref{m14.2} (ii) $a_{n+1}/a_n\to 1$, by Lemma \ref{m14.2} (i) $a_n\to \infty$, and by Lemma \ref{m12.1} $ f(x) \to \infty$ as $x\to\infty$, 
so $k_{n+1}/a_n\to 1$ as $n\to \infty$. It follows that
\[
f(a_n)-f(k_{n+1})= \frac{1}{\log y^\ast} \log \frac{H^{-1}\left( \frac{\log k_{n+1}}{\lambda^\ast(1+\eps)}\right)}{H^{-1}\left( \frac{\log a_n}{\lambda^\ast(1+\eps)}\right)} \to 0, \qquad n\to \infty.
\]
Hence $y_\ast^{-1+f(a_n)-f(k_{n+1})}$, i.e.,
the right hand side of \eqref{m14.1}, converges to $1/y_\ast$ as $n\to\infty$. 
Thus, by \eqref{eq1} and \eqref{m14.1}, for large $n$, 
\begin{align*}
\tilde{\eps} y_\ast > H^{-1}\left( \frac{\log k_{n+1}}{\lambda^\ast(1+\eps)} \right)y_\ast^{k_{n+1}-k_n} \geq \frac{\tilde{\eps}}{2y_\ast}.
\end{align*}
Since  $\tilde{\eps}/(2y_\ast) < \tilde{\eps}y_\ast$, this implies \eqref{m14.3}-\eqref{m14.4}.

The bound \eqref{m14.5} follows from Lemma \ref{m14.2} (ii).
\end{proof}

\bibliographystyle{plain}
\bibliography{fixed}

\end{document}